\documentclass[a4paper]{amsart}
\textwidth16cm \textheight21cm \oddsidemargin-0.1cm
\evensidemargin-0.1cm
\usepackage{amsmath,amssymb}

\newtheorem{theorem}{Theorem}[section]
\newtheorem{lemma}[theorem]{Lemma}
\newtheorem{corollary}[theorem]{Corollary}
\newtheorem{proposition}[theorem]{Proposition}

\theoremstyle{definition}
\newtheorem{example}[theorem]{Example}
\newtheorem{definition}[theorem]{Definition}

\newcommand{\N}{\mathbb N}
\newcommand{\Z}{\mathbb Z}

\newcommand{\BF}{\text{\rm BF}}

\newcommand{\red}{{\text{\rm red}}}

\newcommand{\op}{\text{\rm op}}
\newcommand{\DP}{\negthinspace : \negthinspace}
\newcommand{\DPl}{\negthinspace :_l \negthinspace}
\newcommand{\DPr}{\negthinspace :_r \negthinspace}
\newcommand{\cdv}{\negthinspace \cdot_v \negthinspace}

\DeclareMathOperator{\spec}{spec}

\numberwithin{equation}{section}

\renewcommand{\t}{\, | \,}
\newcommand{\tl}{\, |_l \,}
\newcommand{\tr}{\, |_r \,}

\begin{document}

\address{Institut f\"ur Mathematik und Wissenschaftliches Rechnen \\
Karl-Franzens-Universit\"at Graz \\
Heinrichstra\ss e 36\\
8010 Graz, Austria} \email{alfred.geroldinger@uni-graz.at}

\author{Alfred Geroldinger}

\thanks{This work was supported by
the Austrian Science Fund FWF, Project Number P21576-N18}

\keywords{non-commutative Krull monoids, divisor theory, divisor
homomorphism, non-unique factorizations}

\subjclass[2010]{20M12, 20M13, 16U30, 16H10, 13F05}

\begin{abstract}
A (not necessarily commutative) Krull monoid---as introduced by
Wauters---is defined as a completely integrally closed monoid
satisfying the ascending chain condition on  divisorial two-sided
ideals. We study the structure of these Krull monoids, both with
ideal theoretic and with divisor theoretic methods. Among others we
characterize normalizing Krull monoids by divisor theories. Based on
these results we give a criterion for a  Krull monoid to be a
bounded factorization monoid, and we provide arithmetical finiteness
results in case of normalizing Krull monoids with finite Davenport
constant.
\end{abstract}

\title[Non-commutative  Krull monoids:  a divisor theoretic approach]{Non-commutative  Krull monoids: \\ a divisor theoretic approach and their arithmetic}

\maketitle

\bigskip
\section{Introduction}
\bigskip

The arithmetic concept of a divisor theory has its origin in early
algebraic number theory. Axiomatic approaches to more general
commutative domains and monoids were formulated by Clifford
\cite{Cl38}, by Borewicz and \v{S}afarevi\v{c} \cite{Bo-Sa66}, and
then by Skula \cite{Sk70} and Gundlach \cite{Gu72}. The theory of
divisorial ideals was developed in the first half of the 20th
century by Pr\"ufer, Krull and Lorenzen \cite{Pr32, Kr36r, Kr36f,
Kr37, Lo39}, and its presentation in the book of  Gilmer
\cite{Gi72a} strongly influenced the development of multiplicative
ideal theory. The concept of a commutative Krull monoid (defined as
completely integrally closed commutative monoids satisfying the
ascending chain condition on divisorial ideals) was introduced by
Chouinard \cite{Ch81}  1981 in order to study the Krull ring
property of commutative semigroup rings.

Fresh impetus   came from the theory of non-unique factorizations in
the 1990s.  Halter-Koch observed that  the concept of monoids
with divisor theory coincides with the concept of Krull monoids
\cite{HK90b}, and Krause \cite{Kr89} proved that a commutative
domain is a Krull domain if and only if its multiplicative monoid of
non-zero elements is a Krull monoid. Both, the concepts of divisor
theories and of Krull monoids, were widely generalized, and a
presentation can be found in the monographs \cite{HK98, Ge-HK06a}
(for a recent survey see \cite{HK10b}).

The search for classes of non-commutative rings having an
arithmetical ideal theory---generalizing the classical theory of
commutative Dedekind and Krull domains---was started with the
pioneering work of  Asano \cite{As39a, As49a, As50a, As-Mu53a}.
It lead to a theory of Dedekind-like rings, including  Asano prime
rings and Dedekind prime rings. Their ideal theory and also their
connection with classical maximal orders over Dedekind domains in
central simple algebras is presented in \cite{Mc-Ro01a}.

From the 1970s on a large number of concepts of non-commutative Krull rings has been
introduced (see the contributions of Brungs, Bruyn, Chamarie,
Dubrovin, Jespers, Marubayashi, Miyashita, Rehm and Wauters, cited
in the references). Always the commutative situation was used as a
model, and all these generalizations include Dedekind prime rings as
a special case (see the survey of  Jespers \cite{Je86a}, and Section
\ref{5} for more details). The case of semigroup rings has received
special attention, and the reader may want to consult the monograph
of Jespers and Okni{\'n}ski \cite{Je-Ok07a}.

In 1984  Wauters \cite{Wa84a} introduced non-commutative Krull
monoids generalizing the concept of Chouinard to the non-commutative
setting. His focus was on normalizing Krull monoids, and he showed,
among others, that a prime polynomial identity ring is a
Chamarie-Krull ring if and only if its monoid of regular elements is
a Krull monoid (see  Section \ref{5}).

In the present paper we study non-commutative Krull monoids in the
sense of Wauters, which are defined as completely integrally closed
monoids satisfying the ascending chain condition on
divisorial two-sided ideals. In Section \ref{3} we develop the theory of
divisorial two-sided ideals in analogy to the commutative setting (as it is
done in \cite{HK98, Ge-HK06a}). In Section \ref{4} we introduce
divisor theoretic concepts, and provide a characterization of
normalizing Krull monoids in divisor theoretic terms (Theorem
\ref{4.13}). Although many results and their proofs are very similar
either to those for commutative  monoids or to those for
non-commutative rings, we provide full proofs. In Section \ref{5} we
discuss examples of commutative and non-commutative Krull monoids
with an emphasis on the connection to ring theory. The existence of
a suitable divisor homomorphism is crucial for the investigation of
arithmetical finiteness properties in commutative Krull monoids (see
\cite[Section 3.4]{Ge-HK06a}). Based on the results in Sections {3}
and \ref{4} we can do  some first steps towards a better
understanding of the arithmetic of non-commutative Krull monoids.
Among others, we generalize the concept of  transfer homomorphisms,
give a criterion for a  Krull monoid to be a  bounded-factorization
monoid, and we provide arithmetical finiteness results in case of
normalizing Krull monoids with finite Davenport constant (Theorem
\ref{6.5}).

\bigskip
\section{Basic concepts} \label{2}
\bigskip

Let $\mathbb N$ denote the set of positive integers, and let
$\mathbb N_0 = \mathbb N \cup \{0\}$. For integers $a, b \in \Z$, we
set $[a, b ] = \{ x \in \Z \mid a \le x \le b]$. If $A, B$ are sets, then $A \subset B$ means that $A$ is contained in $B$ but may be equal to $B$.

By a {\it semigroup} we always mean an associative semigroup with
unit element. If not denoted otherwise, we  use multiplicative
notation. Let $H$ be a semigroup. We say that $H$ is {\it
cancellative} if for all elements $a, b, c \in H$, the equation $ab =
ac$ implies $b=c$ and the equation $ba = ca$ implies $b = c$.
Clearly, subsemigroups of groups are cancellative. A group $Q$ is
called a {\it left quotient group} of $H$ (a {\it right quotient
group} of $H$, resp.) if $H \subset Q$ and every element of $Q$ can
be written in the form $a^{-1}b$ with $a, b \in H$ (or in the form
$b a^{-1}$, resp.).

We say that $H$ satisfies the {\it right Ore condition} ({\it left
Ore condition}, resp.) if $aH \cap bH \ne \emptyset$ ($Ha \cap Hb
\ne \emptyset$, resp.) for all $a, b \in H$. A cancellative semigroup
has a left quotient group if and only if it satisfies the left Ore
condition, and if this holds, then the left quotient group is unique
up to isomorphism (see \cite[Theorems 1.24 and 1.25]{Cl-Pr64}).
Moreover, a semigroup is embeddable in a group if and only if it is
embeddable in a left (resp. right) quotient group (see
\cite[Section 12.4]{Cl-Pr67}).

If $H$ is cancellative and satisfies the left and right Ore
condition, then every right quotient group $Q$ of $H$ is also a left
quotient group and conversely. In this case, $Q$ will simply be
called a {\it quotient group} of $H$ (indeed, if $Q$ is a right
quotient group and $s = a x^{-1} \in Q$ with $a, x \in H$, then the
left Ore condition implies the existence of $b, y \in H$ such that
$y a = b x$ and hence $s = a x^{-1} = y^{-1}b$; thus $Q$ is a left
quotient group).

\smallskip
Throughout this paper, a {\it monoid} means a cancellative semigroup
which satisfies the left and the right Ore condition, and every
monoid homomorphism $\varphi \colon H \to D$ satisfies $\varphi
(1_H) = 1_D$.

Let $H$ be a  monoid. We denote by $\mathsf q (H)$ a quotient group
of $H$. If $\varphi \colon H \to D$ is a monoid homomorphism, then
there is a unique homomorphism $\mathsf q (\varphi) \colon \mathsf q
(H) \to \mathsf q (D)$ satisfying $\mathsf q (\varphi) \mid H =
\varphi$. If $S$ is a semigroup with $H \subset S \subset \mathsf q
(H)$, then $S$ is cancellative, $\mathsf q (H)$ is a quotient group
of $S$, and hence $S$ is a monoid. Every such monoid $S$ with $H
\subset S \subset \mathsf q (H)$ will be called an {\it overmonoid}
of $H$. Let $H^{\op}$ denote the {\it opposite monoid} of $H$
($H^{\op}$ is a semigroup  on the set $H$, where multiplication
$H^{\op} \times H^{\op} \to H^{\op}$ is defined by $(a,b) \mapsto
ba$ for all $a, b \in H$; clearly, $H^{\op}$ is a monoid in the
above sense). We will encounter many statements on left and right
ideals (quotients, and so on) in the monoid $H$. Since every
right-statement {\bf (r)} in $H$ is a left-statement {\bf (l)} in
$H^{\op}$, it will always be sufficient to prove the left-statement.

Let $a, b \in H$. The element $a$ is said to be {\it invertible} if
there exists an $a' \in H$ such that $a a' = a' a = 1$. The set of
invertible elements of $H$ will be denoted by $H^{\times}$, and it
is a subgroup of $H$. We say that $H$ is reduced if $H^{\times} =
\{1\}$. A straightforward calculation shows that $aH = bH$ if and
only if $a H^{\times} = b H^{\times}$.

We say that $a$ is a {\it left divisor} ({\it right divisor}, resp.)
if $b \in aH$ ($b \in Ha$, resp.), and we denote this by $a \tl b$
($a \tr b$, resp.). If $b \in aH \cap Ha$, then we say that $a$ is a
{\it divisor} of $b$, and then we write $a \t b$.

The element $a$ is called an  {\it atom}  if $a
      \notin H^{\times}$ and, for all $u, v  \in H$,  $a = u v$ implies
      $u \in H^{\times}$ or $v \in H^{\times}$. The set of atoms of $H$
      is denoted by $\mathcal A (H)$. $H$ is said to be \ {\it atomic}
      if every $u \in H \setminus H^{\times}$ is a product of finitely many atoms of $H$

For a set $P$, we denote by $\mathcal F (P)$ the {\it free abelian
monoid} with basis $P$. Then every $a \in \mathcal F (P)$ has a
unique representation in the form
\[
a = \prod_{p \in P} p^{\mathsf v_p (a)} \,, \quad \text{where} \quad
\mathsf v_p (a) \in \mathbb N_0 \quad \text{and} \quad \mathsf v_p
(a) = 0 \quad \text{for almost all} \quad p \in P \,,
\]
and we call $|a| = \sum_{p \in P} \mathsf v_p (a) \in \mathbb N_0$
the {\it length} of $a$. If $H = \mathcal F (P)$ is free abelian
with basis $P$, then $H$ is reduced, atomic with $\mathcal A (H) =
P$ and $\mathsf q (H) \cong (\Z^{(P)}, +)$. We use all notations and conventions concerning greatest common divisors in commutative monoids as in \cite[Chapter 10]{HK98}.

\bigskip
\section{Divisorial ideals in monoids} \label{3}
\bigskip

In this section we develop the theory of divisorial ideals in
monoids as far as it is needed for the divisor theoretic approach in
Section \ref{4} and the arithmetical results in Section \ref{6}. An ideal will always be a two-sided ideal. We
follow the presentation in the commutative setting (as given in
\cite{HK98, Ge-HK06a}) with the necessary adjustments. The
definition of a Krull monoid (as given in Definition \ref{3.11}) is
due to  Wauters \cite{Wa84a}. For Asano orders $H$ (see Section
\ref{5}), the commutativity of the group $\mathcal F_v (H)^{\times}$
(Proposition \ref{3.12}) dates back to the classical papers of Asano
and can also be found in \cite[Chapter II, $\S$ 2]{Ma-Ra80a}.

Our first step is to introduce modules (following the terminology of \cite{HK10b}),
fractional ideals and divisorial fractional ideals. Each definition
will be followed by a simple technical lemma.

\medskip
\begin{definition} \label{3.1}
Let $H$ be a monoid and $A, B \subset \mathsf q (H)$  subsets.
\begin{enumerate}
\item We say that $A$ is a {\it left module} (resp. {\it right module}) if $HA =
      A$ (resp. $AH = A$), and denote by $\mathcal M_l (H)$ (resp.
      $\mathcal M_r (H)$) the set of all left (resp. right) modules. The
      elements of $\mathcal M (H) = \mathcal M_l (H) \cap \mathcal M_r
      (H)$ are called {\it modules} (of $H$).

\smallskip
\item We set $AB = \{ a b \mid a \in A, b \in B \}$, and define the
      {\it left} and {\it right quotient} of $A$ and $B$ by
      \[
      (A \DPl B ) = \{ x \in \mathsf q (H) \mid x B \subset A \} \quad
      \text{and} \quad (A \DPr B ) = \{ x \in \mathsf q (H) \mid  B x
      \subset A \} \,.
      \]
      If $B = \{b\}$, then $(A \DPl b) = (A \DPl B)$ and $(A \DPr b)
      = (A \DPr B)$.
\end{enumerate}
\end{definition}

\smallskip
The following lemma gathers some simple properties which will be
used without further mention (most of them have a symmetric left or
right variant).

\medskip
\begin{lemma} \label{3.2}
Let $H$ be a monoid, $A, B, C \subset \mathsf q (H)$  subsets, and
$c \in H$.
\begin{enumerate}
\item $(A \DPl c) = Ac^{-1}$, $(cA \DPl B) = c(A \DPl B)$, $(Ac \DPl
      B ) = (A \DPl Bc^{-1})$, and $(A \DPl cB) = c^{-1}(A \DPl B)$.

\smallskip
\item $( A \DPl B) = \bigcap_{b \in B} (A \DPl b) = \bigcap_{b \in B} Ab^{-1}$.

\smallskip
\item $(A \DPl BC) = \big( (A \DPl C) \DPl B\big)$ and $\big( (A
      \DPl B) \DPr C \big) = \big( (A \DPr C) \DPl B \big)$.

\smallskip
\item $
      A \subset \big( H \DPl (H \DPr A) \big) \ = \ \bigcap_{c \in \mathsf q (H), A \subset Hc} Hc  \quad  \quad \text{and}
      \quad A \subset \big( H \DPr (H \DPl A)\big) \ = \ \bigcap_{c \in \mathsf q (H), A \subset cH} c H \,.
      $

\smallskip
\item \begin{enumerate}
      \item If $A \in \mathcal M_l (H)$, then $(A \DPl B) \in \mathcal M_l
            (H)$.

      \item If $A \in \mathcal M_r (H)$, then $(A \DPl B) = (A \DPl BH)$.

      \item If $B \in \mathcal M_l (H)$, then $(A \DPl B) \in
            \mathcal M_r (H)$.
      \end{enumerate}
\end{enumerate}
\end{lemma}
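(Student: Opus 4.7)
My plan is to reduce every assertion in Lemma \ref{3.2} to the two defining equivalences
\[
x \in (A \DPl B) \;\Longleftrightarrow\; xB \subset A, \qquad x \in (A \DPr B) \;\Longleftrightarrow\; Bx \subset A,
\]
from Definition \ref{3.1}, and then to unfold set products and inverses inside the group $\mathsf q(H)$. The identity $(A \DPl c) = Ac^{-1}$ in (1) is immediate, and the other three in (1) each follow from a single rewriting: $xB \subset cA \Leftrightarrow c^{-1}xB \subset A$ gives $(cA \DPl B) = c(A \DPl B)$; $xB \subset Ac \Leftrightarrow xBc^{-1} \subset A$ gives $(Ac \DPl B) = (A \DPl Bc^{-1})$; and associativity of the set product together with $x(cB) = (xc)B$ handles the fourth identity. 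Part (2) is then the observation that $xB \subset A$ iff $xb \in A$ for every $b \in B$, combined with the single-element case from (1).

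For (3), I would use associativity of the set product in $\mathsf q(H)$: the condition $xBC \subset A$ reads either as $(xB)C \subset A$, which yields $xB \subset (A \DPl C)$ and hence $x \in ((A \DPl C) \DPl B)$, or, for the mixed left/right version, as $C(xB) \subset A$, which unravels symmetrically after noting $Cx \subset (A \DPl B) \Leftrightarrow (Cx)B \subset A \Leftrightarrow C(xB) \subset A \Leftrightarrow xB \subset (A \DPr C)$. For (4), I would first check $A \subset (H \DPl (H \DPr A))$ directly: for $a \in A$ and $y \in (H \DPr A)$ one has $Ay \subset H$, so $ay \in H$ and thus $a(H \DPr A) \subset H$. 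For the displayed equality I would use that $y \in (H \DPr A) \Leftrightarrow Ay \subset H \Leftrightarrow A \subset Hy^{-1}$, so $y \mapsto y^{-1}$ bijects $(H \DPr A)$ with $\{c \in \mathsf q(H) : A \subset Hc\}$; applying (2) then yields
\[
\bigl(H \DPl (H \DPr A)\bigr) \;=\; \bigcap_{y \in (H \DPr A)} Hy^{-1} \;=\; \bigcap_{A \subset Hc} Hc,
\]
and the second identity in (4) is symmetric.

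Finally, (5) is a direct check: for (a), if $x \in (A \DPl B)$ and $h \in H$ then $(hx)B = h(xB) \subset hA \subset A$; for (b), the inclusion $(A \DPl BH) \subset (A \DPl B)$ is trivial and the reverse follows from $xBH \subset AH = A$; for (c), $(xh)B = x(hB) \subset xB \subset A$ because $hB \subset HB = B$. The only real obstacle in the whole lemma is bookkeeping: every claim has a left/right dual, and it is easy to transpose $\DPl$ with $\DPr$ or to swap left and right multiplication in $\mathsf q(H)$. Once the two defining equivalences are firmly in place, each assertion reduces to at most two lines of set algebra.
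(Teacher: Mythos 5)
Your proposal is correct and follows essentially the same route as the paper, which writes out only parts 3 and 4 (part 3 by the identical unwinding of the definition, part 4 by the same double inclusion that your inversion bijection $y \mapsto y^{-1}$ combined with part 2 repackages) and declares the remaining items immediate from the definitions, just as you treat them. One caveat on the fourth identity of part 1: your computation via $x(cB) = (xc)B$ yields $(A \DPl cB) = (A \DPl B)c^{-1}$, which is the correct identity and the one the paper actually uses later (e.g.\ $(H \DPl aH) = Ha^{-1}$ in Lemma \ref{3.4}.4), whereas the printed form $c^{-1}(A \DPl B)$ is a left/right slip in the statement; do not force your derivation to match it.
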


\begin{proof}
We verify only the statements 3. and 4., as the remaining ones
follow immediately from the definitions.

\smallskip
3.  We have
\[
(A \DPl B C) = \{ x \in \mathsf q (H) \mid x B C \subset A \} = \{ x
\in \mathsf q (H) \mid x B \subset (A \DPl C) \} = \big( (A \DPl C)
\DPl B \big) \,,
\]
and
\[
\begin{aligned}
\big( (A \DPl B) \DPr C \big) & = \{ x \in \mathsf q (H) \mid Cx
\subset (A \DPl B) \} = \{ x \in \mathsf q (H) \mid CxB \subset A \}
\\
& = \{ x \in \mathsf q (H) \mid xB \subset (A \DPr C) \} = \big( (A
\DPr C) \DPl B \big) \,.
\end{aligned}
\]

\smallskip
4. We check only the first equality. Let $a$ be an element of the given
intersection. We have to show that $a (H \DPr A) \subset H$, whence
for all $b \in (H \DPr A)$ we have to verify that $ab \in H$. If $b
\in (H \DPr A)$, then $A b \subset H$ implies that $A \subset H
b^{-1}$. Thus we obtain that
\[
a \in \bigcap_{c \in \mathsf q (H), A \subset Hc} Hc \ \subset \
Hb^{-1} \,,
\]
and thus $ab \in H$. Conversely, suppose that $a \in \big( H \DPl (H
\DPr A) \big)$. We have to verify that $a \in Hc$ for all $c \in
\mathsf q (H)$ with $A \subset Hc$. If $A \subset Hc$, then $A
c^{-1} \subset H$ implies that $c^{-1} \in (H \DPr A)$. Thus we get
$ac^{-1} \in H$ and $a \in Hc$.
\end{proof}

\medskip
\begin{definition} \label{3.3}
Let $H$ be a monoid and $A \subset \mathsf q (H)$ a subset.
Then $A$ is said to be
\begin{itemize}
\item {\it left $($resp. right$)$ $H$-fractional} \ if there exist $a \in H$ such that $Aa \subset H$ (resp. $aA \subset H )$.

\item $H$-{\it fractional} \ if $A$ is left and right $H$-fractional.

\item a  {\it fractional left $($resp. right$)$ ideal} (of $H$) \ if $A$ is left $H$-fractional and a left module (resp.  right $H$-fractional and  a right module $)$.

\item a {\it left $($resp. right$)$ ideal} (of $H$) \ if $A$ is a fractional left ideal (resp. right ideal) and $A \subset H$.

\item a {\it $($fractional$)$ ideal} \ if $A$ is a (fractional) left and
right ideal.
\end{itemize}
We denote by $\mathcal
F_s (H)$ the set of fractional ideals of $H$, and by $\mathcal I_s
(H)$ the set of ideals of $H$.
\end{definition}

\smallskip
Note that the empty set is an ideal of $H$. Let $A \subset \mathsf q (H)$ be a subset. Then $A$ is
\begin{itemize}
\item left $H$-fractional if and only if $(H \DPr A) \ne
      \emptyset$ if and only if $(H \DPr A) \cap H \ne
      \emptyset$.

\item right $H$-fractional if and only if $(H \DPl A) \ne \emptyset$ if and only if $(H \DPl A) \cap H \ne \emptyset$.
\end{itemize}
Thus, if $A$ is non-empty, then Lemma \ref{3.2} (Items 4. and 5.) shows that $(H \DPl A)$ is a fractional left ideal and $(H \DPr A)$ is a fractional right ideal.

\medskip
\begin{lemma} \label{3.4}
Let $H$ be a monoid.
\begin{enumerate}
\item If $(\mathfrak a_i)_{i \in I}$ is a family of fractional left ideals $($resp. right ideals or ideals$)$ and $J \subset I$ is finite,
      then $\bigcap_{i \in I} \mathfrak a_i$ and $\prod_{i \in J} \mathfrak a_i$
      are fractional left ideals $($resp. right ideals or ideals$)$.

\smallskip
\item Equipped with usual multiplication, $\mathcal F_s (H)$ is a semigroup with unit element $H$.

\smallskip
\item If $\mathfrak a \in \mathcal F_s (H)^{\times}$, then $(H \DP_l \mathfrak a) \mathfrak a = H = \mathfrak a (H \DPr \mathfrak a)$ and
      $(H \DPl \mathfrak a) = (H \DPr \mathfrak a) \in \mathcal F_s (H)$.

\smallskip
\item For every $a \in \mathsf q (H)$, we have $(H \DPl aH) = Ha^{-1}$, $(H \DPr Ha) =
      a^{-1}H$, $\big( H \DPl (H \DPr Ha) \big) = Ha$ and $\big( H \DPr (H
      \DPl aH) \big) = aH$.

\smallskip
\item If $A \subset \mathsf q (H)$, then
      $\big( H \DPl (H \DPr A) \big)$ is a fractional
      left ideal and $\big( H \DPr (H \DPl A)\big)$ is a fractional
      right ideal.

\smallskip
\item If $A \subset \mathsf q (H)$, $\mathfrak a = (H \DPl A)$ and $\mathfrak b = (H \DPr A)$, then $\mathfrak a =  \big( H \DPl (H \DPr
      \mathfrak a)\big)$ and $\mathfrak b = \big( H \DPr (H \DPl \mathfrak b)\big)$.
\end{enumerate}
\end{lemma}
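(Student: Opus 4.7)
The six items split naturally: (1)--(2) set up the semigroup structure of $\mathcal F_s(H)$, (3) treats invertibility, and (4)--(6) unfold the quotient calculus from Lemma~\ref{3.2}. I will take them in turn.

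For (1), intersection is straightforward: a family of left (resp.\ right, two-sided) modules is closed under intersection by a one-line check, and the fractional bound $c \in H$ of any single $\mathfrak a_{i_0}$ also bounds $\bigcap_{i \in I} \mathfrak a_i$. For the product, induction on $|J|$ reduces to two factors. The module property of $\mathfrak a \mathfrak b$ follows from $H \mathfrak a = \mathfrak a$ (and $\mathfrak b H = \mathfrak b$ in the two-sided case). In the two-sided case, if $\mathfrak a a', \mathfrak b b' \subset H$, then $\mathfrak a \mathfrak b \cdot b' a' \subset (\mathfrak a H) a' = \mathfrak a a' \subset H$, so $c = b'a'$ is a right bound; the left bound is symmetric. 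The one-sided variants require a short manipulation with the Ore conditions to push the bound of one factor past the other, and I expect this to be the main technical step of the lemma. Item~(2) then follows immediately: closure comes from~(1), associativity is inherited from set multiplication in $\mathsf q(H)$, and $H$ acts as a unit since $H \mathfrak a = \mathfrak a = \mathfrak a H$ for any fractional ideal $\mathfrak a$.

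For (3), assume $\mathfrak a \mathfrak c = \mathfrak c \mathfrak a = H$ with $\mathfrak c \in \mathcal F_s(H)$. The inclusion $\mathfrak c \subset (H \DPr \mathfrak a)$ is immediate from $\mathfrak a \mathfrak c \subset H$. Conversely, any $x \in (H \DPr \mathfrak a)$ satisfies $\mathfrak c (\mathfrak a x) \subset \mathfrak c H = \mathfrak c$ (the two-sided module structure of $\mathfrak c$ is essential here), and $\mathfrak c \mathfrak a = H$ then gives $x \in Hx = (\mathfrak c \mathfrak a) x \subset \mathfrak c$. Hence $\mathfrak c = (H \DPr \mathfrak a)$; the mirror argument yields $\mathfrak c = (H \DPl \mathfrak a)$, and the remaining assertions of~(3) follow directly.

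For (4), the identity $xaH \subset H \iff xa \in H$ (valid since $H$ is a right module) gives $(H \DPl aH) = Ha^{-1}$, and the symmetric identity yields $(H \DPr Ha) = a^{-1}H$; iterating these with $a^{-1}$ in place of $a$ gives the remaining two identities. For (5), item~5(a) of Lemma~\ref{3.2} applied with $H \in \mathcal M_l(H)$ shows that $(H \DPl (H \DPr A))$ is a left module, and any $c$ in $(H \DPr A) \cap H$---which is non-empty precisely when $A$ is left $H$-fractional---satisfies $(H \DPl (H \DPr A)) \cdot c \subset H$ by the very definition of the quotient; the right-sided statement is symmetric. For (6), item~4 of Lemma~\ref{3.2} applied to $\mathfrak a$ gives $\mathfrak a \subset (H \DPl (H \DPr \mathfrak a))$, while the same lemma applied to $A$ gives $A \subset (H \DPr (H \DPl A)) = (H \DPr \mathfrak a)$; antitonicity of $(H \DPl -)$ then collapses this to $(H \DPl (H \DPr \mathfrak a)) \subset (H \DPl A) = \mathfrak a$, proving equality. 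The identity for $\mathfrak b$ is mirror-symmetric.
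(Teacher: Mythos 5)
Your items (2)--(6), the intersection part of (1), and the two-sided product bound $\mathfrak a \mathfrak b \, b'a' \subset \mathfrak a a' \subset H$ are all correct and essentially coincide with (or are harmless variants of) the paper's arguments; in (5) you verify the module and fractionality properties directly where the paper instead applies the intersection part of (1) to the family $\{Hc \mid A \subset Hc\}$ from Lemma~\ref{3.2}.4, and in (6) your antitonicity phrasing is the paper's element-wise computation.

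The genuine gap is the one-sided product case of (1), which you defer as ``a short manipulation with the Ore conditions.'' That step cannot be carried out in this generality. Your plan amounts to: from $\mathfrak a \mathfrak b b' \subset \mathfrak a H$, find a single $d \in H$ with $\mathfrak a H d \subset H$; since all one knows about $(H \DPr \mathfrak a)$ is that it contains $a'H$, this asks for $d \in H$ with $Hd \subset a'H$, i.e.\ for the principal right ideal $a'H$ to contain a principal left ideal. The Ore condition produces, for each $h \in H$ separately, some $d_h$ with $hd_h \in a'H$, but no common $d$, and none need exist. Concretely, let $H = A_1(k)\setminus\{0\}$ be the multiplicative monoid of the first Weyl algebra (a Noetherian domain, hence a monoid in the sense of Section~\ref{2}) and $\mathfrak a = Hx^{-1}$, a fractional left ideal since $\mathfrak a x = H$. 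If $\mathfrak a\mathfrak a c \subset H$ for some $c \in H$, then the element $1\cdot x^{-1}\cdot x\cdot x^{-1}c$ forces $c = xc'$ with $c' \in H$, and then $x^{-1}hc' \in H$ for all $h$ gives $Hc' \subset xH$, hence $A_1 c' A_1 \subset xA_1$; simplicity of $A_1$ yields $1 \in xA_1$, contradicting that $x$ is not a unit. So $\mathfrak a\mathfrak a$ is not left $H$-fractional, and the product assertion for \emph{fractional} one-sided ideals is simply false. (The paper's own one-line justification, the containment $\prod_{i\in J}\mathfrak a_i \subset \mathfrak a_j$, is likewise valid only when the remaining factors lie in $H$, i.e.\ for ideals rather than fractional ideals; the only product cases actually used later are two-sided fractional ideals and ideals contained in $H$, both of which your argument covers.) So keep your two-sided computation, but do not present the one-sided fractional case as a routine Ore manipulation -- it needs to be excluded or given additional hypotheses.
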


\begin{proof}
1. Since $\bigcap_{i \in I} \mathfrak a_i \subset \mathfrak a_j$,  $\prod_{i \in J} \mathfrak a_i \subset \mathfrak a_j$ for some $j \in J$ and subsets of left (resp. right)
$H$-fractional sets are left (resp. right) $H$-fractional,
the given intersection and product are left (resp. right) $H$-fractional, and then clearly they are
 fractional left
ideals (resp. fractional right ideals or ideals).

\smallskip
2. Obvious.

\smallskip
3. Let $\mathfrak a \in \mathcal F_s (H)^{\times}$ and $\mathfrak b
\in \mathcal F_s (H)$ with $\mathfrak b \mathfrak a = \mathfrak a
\mathfrak b = H$. Then $\mathfrak b \subset (H \DPl \mathfrak a)$
and hence $H = \mathfrak b \mathfrak a \subset (H \DPl \mathfrak a)
\mathfrak a \subset H$, which implies that $(H \DPl \mathfrak a)
\mathfrak a = H$. Similarly, we obtain that $\mathfrak a (H \DPr
\mathfrak a) = H$, and therefore $(H \DPl \mathfrak a) = \mathfrak b
= (H \DPr \mathfrak a) \in \mathcal F_s (H)$.

\smallskip
4. Let $a \in \mathsf q (H)$. The first two equalities follow
directly from the definitions. Using them we infer that
\[
\big( H \DPl (H \DPr Ha) \big) = ( H \DPl a^{-1}H) = Ha \quad
\text{and} \quad \big( H \DPr (H  \DPl aH) \big) = (H \DPr Ha^{-1})
= aH \,.
\]

\smallskip
5. This follows from 1. and from Lemma \ref{3.2}.4.

\smallskip
6. By Lemma \ref{3.2}.4, we have $\mathfrak a \subset  \big( H \DPl (H \DPr \mathfrak a)\big)$.
Conversely, if $q \in \big( H \DPl (H \DPr \mathfrak a)\big)$, then
\[
q A \subset q \big( H \DPr (H \DPl A) \big)
\subset q (H \DPr \mathfrak a) \subset H \,,
\]
and hence $q \in (H \DPl A) = \mathfrak a$.
\end{proof}

\medskip
\begin{definition} \label{3.5}
Let $H$ be a monoid and $A \subset \mathsf q (H)$ a subset.
\begin{enumerate}
\item $A$ is called a {\it divisorial fractional left ideal} if \ $A = \big( H \DPl (H \DPr A) \big)$, and a {\it divisorial fractional right ideal} if \ $A = \big( H \DPr (H \DPl A) \big)$.

\smallskip
\item If  $(H \DPl A) = (H \DPr A)$, then we set $A^{-1} = (H \DP A) = (H \DPl A)$.

\smallskip
\item If $\big( H \DPl (H \DPr A)\big) = \big( H \DPr (H \DPl A)\big)$, then
we set $A_v = \big( H \DPl (H \DPr A)\big)$, and  $A$ is said to be a
{\it divisorial fractional ideal} (or a {\it fractional $v$-ideal})
if $A = A_v$. The  set of such ideals will be denoted by $\mathcal
F_v (H)$, and $\mathcal I_v (H) = \mathcal F_v (H) \cap \mathcal I_s
(H)$ is the set of {\it divisorial ideals} of $H$ (or the set of
$v$-ideals of $H$).

\smallskip
\item
Suppose that $(H \DPl \mathfrak c) = (H \DPr \mathfrak c)$ for all fractional ideals $\mathfrak c$ of $H$.
      \begin{enumerate}

      \smallskip
      \item For fractional ideals $\mathfrak a, \,\mathfrak b$ we define \ $\mathfrak a \cdv \mathfrak b = (\mathfrak a \mathfrak b)_v$, and we call $\mathfrak a \cdv \mathfrak b$ the \ $v$-{\it product} \ of $\mathfrak a$ and $\mathfrak b$.

      \smallskip
      \item A fractional $v$-ideal $\mathfrak a$ is called \ {\it $v$-invertible} \ if $\mathfrak a \cdv \mathfrak a^{-1} = \mathfrak a^{-1} \cdv \mathfrak a = H$. We denote by \ $\mathcal I_v^*(H)$ \ the set of all $v$-invertible $v$-ideals.

      \end{enumerate}
\end{enumerate}
\end{definition}

\smallskip
Lemma \ref{3.4}.5 shows that a divisorial fractional left ideal is
indeed a fractional left ideal, and the analogous statement holds for
divisorial fractional right ideals and for divisorial fractional
ideals. Furthermore, Lemma \ref{3.4}.4 shows that, for every $a \in
\mathsf q (H)$,   $Ha$ is a divisorial fractional left ideal. We
will see that the assumption of Definition \ref{3.5}.4 holds in
completely integrally closed monoids (Definition \ref{3.11}) and in
normalizing monoids (Lemma \ref{4.5}).

\medskip
\begin{lemma} \label{3.6}
Suppose that $(H \DPl \mathfrak c) = (H \DPr \mathfrak c)$ for all fractional ideals $\mathfrak c$
of $H$,  and let $\mathfrak a, \mathfrak b$ be fractional ideals of $H$.

\begin{enumerate}
\smallskip
\item We have $\mathfrak a \subset \mathfrak a_v = (\mathfrak a_v)_v$ and $(\mathfrak a_v)^{-1} = \mathfrak a^{-1} = (\mathfrak a^{-1})_v$.
      In particular,  $\mathfrak a^{-1}, \mathfrak a_v \in \mathcal F_v (H)$.

\smallskip
\item $( \mathfrak a \mathfrak a^{-1})_v =
(\mathfrak a_v \DP \mathfrak a)^{-1}$.

\smallskip
\item If $\mathfrak a, \mathfrak b \in \mathcal F_v (H)$, then $\mathfrak a \cdv \mathfrak b \in \mathcal F_v (H)$ and $\mathfrak a \cap \mathfrak b \in \mathcal F_v (H)$, and if $\mathfrak a, \mathfrak b \in \mathcal I_v (H)$, then $\mathfrak a \cdv \mathfrak b \in \mathcal I_v (H)$, $\mathfrak a \cap \mathfrak b \in \mathcal I_v (H)$, and $\mathfrak a \cdv \mathfrak b \subset \mathfrak a \cap \mathfrak b$.

\smallskip
\item If $d \in \mathsf q (H)$ with $d \mathfrak a \subset \mathfrak
      b$, then $d \mathfrak a_v \subset \mathfrak b_v$. Similarly,
      $\mathfrak a d \subset \mathfrak b$ implies that $\mathfrak a_v d \subset \mathfrak
      b$.

\smallskip
\item We have $(\mathfrak a \mathfrak b)_v = (\mathfrak a_v
      \mathfrak b)_v = (\mathfrak a_v \mathfrak b_v)_v$.

\smallskip
\item Equipped with $v$-multiplication, $\mathcal F_v (H)$ is a semigroup with unit element $H$, and $\mathcal I_v (H)$ is a subsemigroup.
      Furthermore, if $\mathfrak a \in \mathcal F_v (H)$, then $\mathfrak a$ is $v$-invertible if and only if $\mathfrak a \in \mathcal F_v
      (H)^{\times}$, and hence $\mathcal I_v^* (H) = \mathcal I_v
      (H) \cap \mathcal F_v (H)^{\times}$.
\end{enumerate}
\end{lemma}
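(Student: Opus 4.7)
The plan is to work through the six items in order, since each builds on the previous one, with item 1 establishing the foundational idempotency of $v$-closure. For item 1, Lemma \ref{3.2}.4 immediately gives $\mathfrak{a} \subset \mathfrak{a}_v$. The inclusion $\mathfrak{a} \subset \mathfrak{a}_v$ yields $(\mathfrak{a}_v)^{-1} \subset \mathfrak{a}^{-1}$ by monotonicity of $(H \DP \cdot)$, while the defining relation $\mathfrak{a}^{-1} \mathfrak{a}_v = \mathfrak{a}^{-1}(H \DP \mathfrak{a}^{-1}) \subset H$ gives the reverse inclusion $\mathfrak{a}^{-1} \subset (\mathfrak{a}_v)^{-1}$. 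Thus $(\mathfrak{a}_v)^{-1} = \mathfrak{a}^{-1}$, and applying $(H \DP \cdot)$ once more yields both $(\mathfrak{a}_v)_v = \mathfrak{a}_v$ and $(\mathfrak{a}^{-1})_v = \mathfrak{a}^{-1}$, so $\mathfrak{a}_v, \mathfrak{a}^{-1} \in \mathcal{F}_v(H)$.

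Item 2 is immediate from Lemma \ref{3.2}.3: $(H \DP \mathfrak{a}\mathfrak{a}^{-1}) = ((H \DP \mathfrak{a}^{-1}) \DP \mathfrak{a}) = (\mathfrak{a}_v \DP \mathfrak{a})$, and then applying $(H \DP \cdot)$ once more gives the result. For item 3, the fact that $\mathfrak{a} \cdv \mathfrak{b} \in \mathcal{F}_v(H)$ is item 1 applied to the fractional ideal $\mathfrak{a}\mathfrak{b}$, and the intersection statement follows since $(\mathfrak{a} \cap \mathfrak{b})_v \subset \mathfrak{a}_v \cap \mathfrak{b}_v = \mathfrak{a} \cap \mathfrak{b}$ combined with $\mathfrak{a} \cap \mathfrak{b} \subset (\mathfrak{a} \cap \mathfrak{b})_v$. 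The inclusion $\mathfrak{a} \cdv \mathfrak{b} \subset \mathfrak{a} \cap \mathfrak{b}$ for integral $v$-ideals reduces to $\mathfrak{a}\mathfrak{b} \subset \mathfrak{a} \cap \mathfrak{b}$ followed by taking $v$-hulls.

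Item 4 is the key technical tool. Starting from $d\mathfrak{a} \subset \mathfrak{b}$, I would use Lemma \ref{3.2}.1 to rewrite this as $\mathfrak{a} \subset d^{-1}\mathfrak{b}$, apply $(H \DP \cdot)$ to obtain $d\mathfrak{b}^{-1} \subset \mathfrak{a}^{-1}$, and apply it once more to get $d\mathfrak{a}_v \subset \mathfrak{b}_v$; the right-multiplication version is symmetric. Item 5 then follows by fixing $b \in \mathfrak{b}$ and applying item 4 to $\mathfrak{a} b \subset (\mathfrak{a}\mathfrak{b})_v$, yielding $\mathfrak{a}_v \, b \subset (\mathfrak{a}\mathfrak{b})_v$ for every $b \in \mathfrak{b}$, hence $\mathfrak{a}_v \mathfrak{b} \subset (\mathfrak{a}\mathfrak{b})_v$ and thus $(\mathfrak{a}_v \mathfrak{b})_v \subset (\mathfrak{a}\mathfrak{b})_v$; the reverse inclusion and the second equality are obtained analogously (with $\mathfrak{b}$ in place of $\mathfrak{a}$).

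Finally, in item 6, associativity and the unit property of $\cdv$ fall out of item 5, since $(\mathfrak{a} \cdv \mathfrak{b}) \cdv \mathfrak{c} = (\mathfrak{a}\mathfrak{b}\mathfrak{c})_v = \mathfrak{a} \cdv (\mathfrak{b} \cdv \mathfrak{c})$ and $H \cdv \mathfrak{a} = (H\mathfrak{a})_v = \mathfrak{a}_v = \mathfrak{a}$. The main obstacle, and the only place needing real argument, is showing that $\mathfrak{a} \in \mathcal{F}_v(H)^\times$ implies $v$-invertibility: given $\mathfrak{a} \cdv \mathfrak{b} = H$ with $\mathfrak{b} \in \mathcal{F}_v(H)$, the inclusion $\mathfrak{a}\mathfrak{b} \subset H$ gives $\mathfrak{b} \subset \mathfrak{a}^{-1}$; conversely, for $x \in \mathfrak{a}^{-1}$ the relation $x \mathfrak{a} \mathfrak{b} \subset \mathfrak{b}$ combined with item 4 yields $x (\mathfrak{a}\mathfrak{b})_v \subset \mathfrak{b}_v = \mathfrak{b}$, and since $(\mathfrak{a}\mathfrak{b})_v = H$ we obtain $x \in \mathfrak{b}$. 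Thus $\mathfrak{b} = \mathfrak{a}^{-1}$, giving $v$-invertibility, and $\mathcal{I}_v^*(H) = \mathcal{I}_v(H) \cap \mathcal{F}_v(H)^\times$ follows at once.
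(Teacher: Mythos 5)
Your proposal is correct overall and, for items 4 and 5, takes a genuinely different route from the paper. For item 4 the paper avoids all left/right bookkeeping by writing $\mathfrak a_v = \bigcap_{c \in \mathsf q(H),\, \mathfrak a \subset cH} cH$ and observing that left multiplication by $d$ permutes the sets $cH$, so that $d\mathfrak a_v = (d\mathfrak a)_v \subset \mathfrak b_v$; your double-dualization argument reaches the same conclusion but contains a side-of-multiplication slip that matters in this non-commutative setting: from $\mathfrak a \subset d^{-1}\mathfrak b$ one gets $(H \DPl d^{-1}\mathfrak b) = \mathfrak b^{-1}d \subset \mathfrak a^{-1}$, not $d\,\mathfrak b^{-1} \subset \mathfrak a^{-1}$; the proof still closes because the second dualization should be taken as a right quotient, $(H \DPr \mathfrak a^{-1}) \subset (H \DPr \mathfrak b^{-1}d) = d^{-1}(H \DPr \mathfrak b^{-1}) = d^{-1}\mathfrak b_v$, which uses the standing hypothesis to identify $(H \DPr \mathfrak b^{-1})$ with $\mathfrak b_v$. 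For item 5 your argument is cleaner than the paper's: you apply item 4 elementwise directly to the inclusion $\mathfrak a b \subset (\mathfrak a\mathfrak b)_v$ and conclude $(\mathfrak a_v\mathfrak b)_v \subset (\mathfrak a\mathfrak b)_v$ by idempotency, whereas the paper works with $(\mathfrak a\mathfrak b)^{-1} \subset (\mathfrak a_v\mathfrak b_v)^{-1}$ and must flip a factor $d$ from the left of $\mathfrak a\mathfrak b_v$ to its right using that $\mathfrak a\mathfrak b_v$ is a fractional ideal; your version buys a shorter argument at no cost. In item 6 you identify the $v$-inverse of $\mathfrak a \in \mathcal F_v(H)^{\times}$ as being exactly $\mathfrak a^{-1}$ (via $x\mathfrak a\mathfrak b \subset \mathfrak b$ and item 4), which is marginally more informative than the paper's squeeze $H = (\mathfrak a\mathfrak b)_v \subset \mathfrak a \cdv \mathfrak a^{-1} \subset H$; items 1--3 agree with the paper up to cosmetic rearrangement.
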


\begin{proof}
1. By Lemma \ref{3.4}.5, we have $\mathfrak a \subset   \mathfrak
a_v$. Therefore it follows that
\[
\big[ ( \mathfrak a^{-1})^{-1} \big]^{-1} = (\mathfrak a_v)^{-1}
\subset \mathfrak a^{-1} \subset (\mathfrak a^{-1})_v = \big[ (
\mathfrak a^{-1})^{-1} \big]^{-1} \,,
\]
hence $(\mathfrak a_v)^{-1} = \mathfrak a^{-1} = (\mathfrak
a^{-1})_v$ and $(\mathfrak a_v)_v = \big( (\mathfrak a_v)^{-1}
\big)^{-1} = (\mathfrak a^{-1})^{-1} = \mathfrak a_v$.

\smallskip
2.
Using Lemma \ref{3.2}.3 we infer that
\[
(\mathfrak a \mathfrak a^{-1})^{-1} = (H \DP \mathfrak a \mathfrak
a^{-1}) = \big( ( H \DP \mathfrak a^{-1}) \DP \mathfrak a \big) = (
\mathfrak a_v \DP \mathfrak a) \,,
\]
and hence $(\mathfrak a \mathfrak a^{-1})_v = ( \mathfrak a_v \DP
\mathfrak a)^{-1}$.

\smallskip
3. Let $\mathfrak a, \mathfrak b \in \mathcal F_v (H)$. Then
$\mathfrak a \cdv \mathfrak b = ( \mathfrak a \mathfrak b)_v$ is a
divisorial fractional ideal by 1. Clearly, we have $\mathfrak a \cap
\mathfrak b \subset (\mathfrak a \cap \mathfrak b)_v \subset
\mathfrak a_v \cap \mathfrak b_v = \mathfrak a \cap \mathfrak b$.
The remaining statements are clear.

\smallskip
4. If $d \mathfrak a \subset \mathfrak b$, then we get
\[
\begin{aligned}
d \mathfrak a_v & = \ d  \bigcap_{c \in \mathsf q (H), \mathfrak a
\subset cH} c H \ = \ \bigcap_{c \in \mathsf q (H), d \mathfrak a
\subset dcH} d c H  = \bigcap_{e \in \mathsf q (H), d \mathfrak a
\subset e H} e  H \\
 & = \big( H \DPr (H \DPl d \mathfrak a) \big)
\subset \big( H \DPr (H \DPl \mathfrak b) \big) = \mathfrak b_v \,.
\end{aligned}
\]
If $\mathfrak a d \subset \mathfrak b$, we argue similarly.

\smallskip
5. We have $(\mathfrak a \mathfrak b)_v \subset (\mathfrak a_v
\mathfrak b)_v \subset (\mathfrak a_v \mathfrak b_v)_v$. To obtain
the reverse inclusion it is sufficient to verify that
\[
(\mathfrak a \mathfrak b)^{-1} \subset (\mathfrak a_v \mathfrak
b_v)^{-1} \,.
\]
Let $d \in (\mathfrak a \mathfrak b)^{-1}$. Then $d \mathfrak a
\mathfrak b \subset H$ and hence $d a \mathfrak b \subset H$ for all
$a \in \mathfrak a$. Then 4. implies that $d a \mathfrak b_v \subset
H_v = H$ for all $a \in \mathfrak a$ and hence $d \mathfrak a
\mathfrak b_v \subset H$. Since $\mathfrak a \mathfrak b_v$ is a
fractional ideal, it follows that $\mathfrak a \mathfrak b_v d
\subset H$ and hence $\mathfrak a b d \subset H$ for all $b \in
\mathfrak b_v$. Again 4. implies that $\mathfrak a_v b d \subset H$
for all $b \in \mathfrak b_v$ and hence $\mathfrak a_v \mathfrak b_v
d \subset H$.

\smallskip
6. Using 5. we obtain to first assertion. We provide the details for the
furthermore statement. Let $\mathfrak a \in \mathcal F_v (H)$. Then
$\mathfrak a^{-1} \in \mathcal F_v (H)$, and thus, if $\mathfrak a$
is $v$-invertible, then $\mathfrak a \in \mathcal F_v (H)^{\times}$.
Conversely, suppose that $\mathfrak a \in \mathcal F_v (H)^{\times}$
and let $\mathfrak b \in \mathcal F_v (H)$ such that $\mathfrak a
\cdv \mathfrak b = \mathfrak b \cdv \mathfrak a = H$. Then
$\mathfrak a \mathfrak b \subset H$, hence $\mathfrak b \subset (H
\DP \mathfrak a)$ and $\mathfrak a \mathfrak b \subset \mathfrak a
(H \DP \mathfrak a) \subset H$. This implies that $H = (\mathfrak a
\mathfrak b)_v \subset \mathfrak a \cdv \mathfrak a^{-1} \subset H$.
Similarly, we get $\mathfrak a^{-1} \cdv \mathfrak a = H$, and hence
$\mathfrak a$ is $v$-invertible.
\end{proof}

\smallskip
The next topic are prime ideals and their properties.

\medskip
\begin{lemma} \label{3.7}
Let $H$ be a monoid and $\mathfrak p \subset H$ an ideal. Then the
following statements are equivalent{\rm \,:}
\begin{enumerate}
\item[(a)] If $\mathfrak a, \mathfrak b \subset H$ are ideals with
           $\mathfrak a \mathfrak b \subset \mathfrak p$, then $\mathfrak a
           \subset \mathfrak p$ or $\mathfrak b \subset \mathfrak p$.

\smallskip
\item[(b)] If $\mathfrak a, \mathfrak b \subset H$ are right ideals with
           $\mathfrak a \mathfrak b \subset \mathfrak p$, then $\mathfrak a
           \subset \mathfrak p$ or $\mathfrak b \subset \mathfrak p$.

\smallskip
\item[(c)] If $\mathfrak a, \mathfrak b \subset H$ are left ideals with
           $\mathfrak a \mathfrak b \subset \mathfrak p$, then $\mathfrak a
           \subset \mathfrak p$ or $\mathfrak b \subset \mathfrak p$.

\smallskip
\item[(d)] If $a, b \in H$ with $aHb \subset \mathfrak p$, then $a
           \in \mathfrak p$ or $b \in \mathfrak p$.
\end{enumerate}
\end{lemma}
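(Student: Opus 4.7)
The plan is to establish the equivalences via the cycle
\[
\text{(b)} \Rightarrow \text{(a)}, \quad \text{(c)} \Rightarrow \text{(a)}, \quad \text{(a)} \Rightarrow \text{(d)}, \quad \text{(d)} \Rightarrow \text{(b)}, \quad \text{(d)} \Rightarrow \text{(c)}.
\]
The first two implications are immediate, since every (two-sided) ideal is both a left and a right ideal, so a condition about left or right ideals specializes to one about two-sided ideals.

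For (a) $\Rightarrow$ (d), given $a, b \in H$ with $aHb \subset \mathfrak{p}$, I would form the principal two-sided ideals $HaH$ and $HbH$ (these are ideals because $H$ has an identity, so in particular $a \in HaH$ and $b \in HbH$). Their product satisfies $(HaH)(HbH) = H(aHb)H \subset H\mathfrak{p}H \subset \mathfrak{p}$, using that $\mathfrak{p}$ is a two-sided ideal. By hypothesis, $HaH \subset \mathfrak{p}$ or $HbH \subset \mathfrak{p}$, hence $a \in \mathfrak{p}$ or $b \in \mathfrak{p}$.

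For (d) $\Rightarrow$ (b), suppose $\mathfrak{a}, \mathfrak{b}$ are right ideals with $\mathfrak{a}\mathfrak{b} \subset \mathfrak{p}$ and $\mathfrak{a} \not\subset \mathfrak{p}$; pick $a \in \mathfrak{a} \setminus \mathfrak{p}$. For every $b \in \mathfrak{b}$, the right-ideal property gives $aH \subset \mathfrak{a}$, whence $aHb \subset \mathfrak{a}b \subset \mathfrak{a}\mathfrak{b} \subset \mathfrak{p}$; then (d) forces $b \in \mathfrak{p}$, so $\mathfrak{b} \subset \mathfrak{p}$. The implication (d) $\Rightarrow$ (c) is the mirror argument: if $\mathfrak{b} \not\subset \mathfrak{p}$, pick $b \in \mathfrak{b} \setminus \mathfrak{p}$ and use $Hb \subset \mathfrak{b}$ to get $aHb \subset a\mathfrak{b} \subset \mathfrak{a}\mathfrak{b} \subset \mathfrak{p}$ for each $a \in \mathfrak{a}$, so $\mathfrak{a} \subset \mathfrak{p}$.

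The proof is essentially bookkeeping; no step is a serious obstacle. The one place where non-commutativity could trip one up is the passage from an element-wise statement to an ideal-theoretic one (and vice versa), so I would pay particular attention to (a) $\Rightarrow$ (d), where it is important that $aHb$ is sandwiched in a two-sided way to produce a containment between two-sided ideals, and to (d) $\Rightarrow$ (b)/(c), where exactly the one-sided ideal property being assumed ($\mathfrak{a}H \subset \mathfrak{a}$ for the right case, $H\mathfrak{b} \subset \mathfrak{b}$ for the left case) is what makes $aHb$ land inside $\mathfrak{a}\mathfrak{b}$.
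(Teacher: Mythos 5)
Your proof is correct and uses the same ingredients as the paper's: thickening elements or one-sided ideals to two-sided ones by multiplying by $H$ (here $HaH$, $HbH$; the paper uses $H\mathfrak a$, $aH$, etc.), absorbing into $\mathfrak p$ via its two-sidedness, and the observation that $aHb\subset\mathfrak a\mathfrak b$ when $a\in\mathfrak a$, $b\in\mathfrak b$ and the relevant one-sided module property holds. The paper merely arranges the same arguments into the cycles (a)$\Rightarrow$(b)$\Rightarrow$(d)$\Rightarrow$(a) and (a)$\Rightarrow$(c)$\Rightarrow$(d)$\Rightarrow$(a), so the difference is only in the bookkeeping of which implications are proved directly.
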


\begin{proof}
(a) \,$\Rightarrow$\, (b) \ If $\mathfrak a, \mathfrak b \subset H$
are right ideals with
           $\mathfrak a \mathfrak b \subset \mathfrak p$, then $H
           \mathfrak a, H \mathfrak b \subset H$ are ideals with $(H
           \mathfrak a)(H \mathfrak b) = H \mathfrak a \mathfrak b
           \subset H \mathfrak p = \mathfrak p$, and hence
           $\mathfrak a \subset H \mathfrak a \subset \mathfrak p$
           or $\mathfrak b \subset H \mathfrak b \subset \mathfrak
           p$.

\smallskip
(b) \,$\Rightarrow$\, (d) \ If $a, b \in H$ with $aHb \subset
\mathfrak p$, then $(aH)(bH)  \subset \mathfrak p H = \mathfrak p$,
and hence $a \in aH \subset \mathfrak p$ or $b \in bH \subset
\mathfrak p$.

\smallskip
(d) \,$\Rightarrow$\, (a) \ If $\mathfrak a \not\subset \mathfrak p$
and $\mathfrak b \not\subset \mathfrak p$, then there exist $a \in
\mathfrak a \setminus \mathfrak p$, $b \in \mathfrak b \setminus
\mathfrak p$, and hence $aHb \not\subset \mathfrak p$, which implies
that $\mathfrak a \mathfrak b \not\subset \mathfrak p$.

\smallskip
The proof of the implications (a) \,$\Rightarrow$\, (c)
\,$\Rightarrow$\, (d) \,$\Rightarrow$\, (a) runs along the same
lines.
\end{proof}

\smallskip
An ideal $\mathfrak p \subset H$ is called {\it prime} if $\mathfrak p \ne  H$ and if it
satisfies the equivalent statements in Lemma \ref{3.7}. We denote by
$s$-$\spec (H)$ the set of prime ideals of $H$, and by $v$-$\spec
(H) = s$-$\spec (H) \cap \mathcal I_v (H)$  the set of divisorial
prime ideals of $H$. Following ring theory (\cite[Definition
10.3]{La01a}), we call a subset $S \subset H$ an $m$-{\it system}
if, for any $a, b \in S$, there exists an $h \in H$ such that $ahb
\in S$. Thus Lemma \ref{3.7}.(d) shows that an ideal $\mathfrak p
\subset H$ is prime if and only if $H \setminus \mathfrak p$ is an
$m$-system.

A subset $\mathfrak m \subset H$ is called a \ $v$-{\it maximal}
$v$-{\it ideal} \ if $\mathfrak m$ is a maximal element of $\mathcal
I_v(H) \setminus \{H\}$ (with respect to the inclusion).
            We denote by $v$-$\max(H)$ \ the set of all $v$-maximal $v$-ideals of $H$.

\medskip
\begin{lemma} \label{3.8}
Suppose that $(H \DPl \mathfrak c) = (H \DPr \mathfrak c)$ for all
fractional ideals $\mathfrak c$ of $H$.
\begin{enumerate}
\item If \ $S \subset H$ is an $m$-system and $\mathfrak p$ is maximal
      in the set $\{ \mathfrak a \in \mathcal I_v(H) \mid \mathfrak a
      \cap S = \emptyset\}$, then $\mathfrak p \in v\text{\rm -spec}(H)$.

\smallskip
\item $v$-$\max (H) \subset v$-$\spec(H)$.
\end{enumerate}
\end{lemma}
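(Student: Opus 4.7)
The plan is to prove Part 1 via the prime-ideal criterion Lemma \ref{3.7}(d), after which Part 2 will fall out by choosing the right $m$-system.

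For Part 1, I would argue by contradiction: assume $a, b \in H$ satisfy $aHb \subset \mathfrak p$ but neither $a$ nor $b$ lies in $\mathfrak p$. The strategy is to enlarge $\mathfrak p$ by $a$ and by $b$ separately, exploit maximality to catch elements of $S$ in each enlargement, and then derive an element of $S \cap \mathfrak p$ from the $m$-system property, contradicting $\mathfrak p \cap S = \emptyset$. Concretely, set $\mathfrak a = (\mathfrak p \cup HaH)_v$ and $\mathfrak b = (\mathfrak p \cup HbH)_v$. Each of $\mathfrak p \cup HaH$ and $\mathfrak p \cup HbH$ is a two-sided module contained in $H$, hence a (fractional) ideal, so the standing hypothesis $(H \DPl \mathfrak c) = (H \DPr \mathfrak c)$ makes the $v$-closure meaningful; the resulting $\mathfrak a, \mathfrak b$ lie in $\mathcal I_v(H)$ and strictly contain $\mathfrak p$ (since $a \in HaH \subset \mathfrak a$ and $a \notin \mathfrak p$, and likewise for $b$). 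Maximality of $\mathfrak p$ then yields $s_1 \in \mathfrak a \cap S$ and $s_2 \in \mathfrak b \cap S$.

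The heart of the proof is the set-level containment
\[
(\mathfrak p \cup HaH)(\mathfrak p \cup HbH) \,=\, \mathfrak p \mathfrak p \,\cup\, \mathfrak p HbH \,\cup\, HaH\mathfrak p \,\cup\, HaHbH \,\subset\, \mathfrak p,
\]
which uses $H\mathfrak p = \mathfrak p H = \mathfrak p$ together with $HaHbH = H(aHb)H \subset H \mathfrak p H = \mathfrak p$. Lemma \ref{3.6}.5 then gives $\mathfrak a \cdv \mathfrak b \subset \mathfrak p_v = \mathfrak p$, and since $\mathfrak a$ is a right $H$-module we obtain $s_1 H s_2 \subset \mathfrak a \mathfrak b \subset \mathfrak a \cdv \mathfrak b \subset \mathfrak p$. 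By the $m$-system property there is $h \in H$ with $s_1 h s_2 \in S$; this element also lies in $\mathfrak p$, delivering the contradiction. (Note $\mathfrak p \ne H$ since $S$ is non-empty and disjoint from $\mathfrak p$.)

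Part 2 follows by applying Part 1 with $S = \{1\}$, which is trivially an $m$-system (take $h = 1$). Because an ideal of $H$ contains $1$ iff it equals $H$, the set $\{\mathfrak a \in \mathcal I_v(H) \mid \mathfrak a \cap \{1\} = \emptyset\}$ coincides with $\mathcal I_v(H) \setminus \{H\}$, so any $\mathfrak m \in v$-$\max(H)$ is maximal in exactly the collection to which Part 1 applies and is therefore prime. The only real obstacle is the bookkeeping needed to verify that $\mathfrak p \cup HaH$ qualifies as an ideal so that $v$-closure is legitimate, and the four-term product expansion really lies in $\mathfrak p$; once these routine checks are in place, the $m$-system condition closes the argument automatically.
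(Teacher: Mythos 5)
Your proposal is correct and follows essentially the same route as the paper: both arguments form $(\mathfrak p \cup HaH)_v$ and $(\mathfrak p \cup HbH)_v$, use maximality to pick elements of $S$ in each, apply Lemma \ref{3.6}.5 to push the $v$-closure through the product, and contradict $\mathfrak p \cap S = \emptyset$ via the $m$-system property, with Part 2 obtained from $S = \{1\}$ exactly as in the paper. The only differences are cosmetic (you expand the product into four terms and omit the intermediate factor $H$ that the paper inserts), so there is nothing to add.
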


\begin{proof}
1. Assume to the contrary that $\mathfrak p \in \mathcal I_v(H)$ is
maximal with respect to $\mathfrak p \cap S = \emptyset$, but
$\mathfrak p$ is not prime. Then there exist elements $a,\,b \in H
\setminus \mathfrak p$ such that $aHb \subset \mathfrak p$. By the
maximal property of $\mathfrak p$, we have $S \cap (\mathfrak p \cup
HaH)_v \ne \emptyset$ and $S \cap (\mathfrak p \cup HbH)_v \ne
\emptyset$. If $s \in S \cap (\mathfrak p \cup HaH)_v$ and $t \in S
\cap (\mathfrak p\cup HbH)_v$, then $sht \in S$ for some $h \in H$,
and using Lemma \ref{3.6}.5 we obtain that
\[
sht \in \bigl( \mathfrak p\cup HaH \bigr)_v \, H \bigl(\mathfrak p
\cup HbH \bigr)_v \subset \bigl[(\mathfrak p\cup HaH) H (\mathfrak
p\cup HbH)\bigr]_v \subset [\mathfrak p \cup HaHbH]_v = \mathfrak
p_v = \mathfrak p\,,
\]
a contradiction.

\smallskip

2. If $\mathfrak m \in v\text{\rm -max}(H)$, then $\mathfrak m \in
\mathcal I_v(H)$ is maximal with respect to $\mathfrak m \cap \{1\}
= \emptyset$, and therefore $\mathfrak m$ is prime by 1.
\end{proof}

\smallskip
Our next step is to introduce completely integrally closed monoids.

\medskip
\begin{lemma} \label{3.9}
Let $H$ be a monoid and $H'$ an overmonoid of $H$.
\begin{enumerate}
\item If $I = (H \DPr H')$, then $H' \subset (I \DPl I)$.

\smallskip
\item Let $a, b \in H$ with $a H' b \subset H$. Then there exists a
      monoid $H''$ with $H \subset H'' \subset H'$ such that $(H \DPr H'')
      \ne \emptyset$ and $(H'' \DPl H') \ne \emptyset$.
\end{enumerate}
\end{lemma}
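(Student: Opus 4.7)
My plan for part (1) is a one-step direct verification. Given $h' \in H'$ and $i \in I := (H \DPr H')$, I claim $h' i \in I$. Since $H'$ is closed under multiplication, $H' h' \subset H'$, and hence
\[
H'(h' i) = (H' h')\, i \subset H'\, i \subset H
\]
by the defining property of $I$. Thus $h' I \subset I$, that is, $h' \in (I \DPl I)$, giving $H' \subset (I \DPl I)$.

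For part (2), I would build $H''$ via an idealizer construction applied to the conductor of $a H'$, patterned on part (1). Set $K := (H \DPr a H') = \{x \in \mathsf q (H) : a H' x \subset H\}$; the hypothesis gives $b \in K$, so $K \ne \emptyset$. For any $a h' \in a H'$, using that $H'$ is closed under multiplication one checks $(a H')(a h') \subset a H'$, and hence for $k \in K$ one has $(a H')(a h' k) \subset (a H') k \subset H$, so $a h' k \in K$. This shows $a H' \subset (K \DPl K)$, and similarly $H \subset (K \DPl K)$. Setting
\[
H'' := (K \DPl K) \cap H',
\]
one obtains a submonoid of $H'$ containing $H \cup a H'$, whence $H \subset H'' \subset H'$ and $a \in (H'' \DPl H')$, so $(H'' \DPl H') \ne \emptyset$.

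The step I expect to be the main obstacle is establishing $(H \DPr H'') \ne \emptyset$. The structure $H'' \subset (K \DPl K)$ yields only $H''\, b \subset K$, which---substituting $h' = 1$ in the defining condition of $K$---unpacks to the sandwich $a H'' b \subset H$, not the desired one-sided form $H'' x \subset H$. To bridge this, my plan is to exploit the Ore conditions on $H$: applying left Ore to the pair $(a, b)$ yields $p, q \in H$ with $p a = q b$, which should allow one to rewrite the sandwich as a right-multiplication witness by absorbing the left factor $a$. Producing a single uniform $x \in \mathsf q (H)$ that works for all of $H''$ is the crux, and if the naive idealizer is too large, the plan would be to refine $H''$ further---for example by intersecting with an appropriate principal fractional module.
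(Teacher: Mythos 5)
Your part (1) is correct and is essentially the paper's own argument (the paper writes $H'(H'I)=H'I\subset H$, hence $H'I\subset I$; you write the same computation element-wise).

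Part (2) has a genuine gap, and it is exactly the one you flag: your $H''=(K\DPl K)\cap H'$ only yields the two-sided sandwich $aH''b\subset H$, and nothing in the construction produces a single $x\in\mathsf q(H)$ with $H''x\subset H$. The Ore-condition idea does not bridge this. From $aH''b\subset H$ you get $H''b\subset a^{-1}H$, and a relation $pa=qb$ with $p,q\in H$ relates $a$ and $b$ as elements of $H$ but gives no mechanism for converting the left factor $a^{-1}$ into a right multiplier; moreover the idealizer $(K\DPl K)\cap H'$ may genuinely contain elements $y$ satisfying $aH'yk\subset H$ for all $k\in K$ which admit no uniform right multiplier into $H$, so no refinement "after the fact" is guaranteed to exist for this $H''$. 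The repair is to make $H''$ smaller and completely explicit: the paper takes $H''=HaH'\cup H$. One checks $H''H''=H''$ (using $H'H'=H'$ and $a\in H\subset H'$), so $H''$ is a monoid with $H\subset H''\subset H'$; then $H''b=HaH'b\cup Hb\subset H$ because $aH'b\subset H$, so $b\in(H\DPr H'')$, and $aH'\subset HaH'\subset H''$, so $a\in(H''\DPl H')$. The idea you were missing is that every element of $H''\setminus H$ should be forced to have the explicit shape $hah'$ with $h\in H$, $h'\in H'$, since those are precisely the elements whose right product with $b$ visibly lands in $H$; an idealizer of the conductor is the wrong (too large) object here.
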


\begin{proof}
1. Since $H' (H' I) = H' I \subset H$, it follows that $H' I \subset
(H \DPr H') = I$ and hence $H' \subset (I \DPl I)$.

\smallskip
2. We set $H'' = HaH' \cup H$, and obtain that $H \subset H''
\subset H'$, $H'' H'' = H''$, $H''b \subset H$ and $aH' \subset
H''$.
\end{proof}

\medskip
\begin{lemma} \label{3.10}
Let $H$ be a monoid.
\begin{enumerate}
\item The following statements are equivalent{\rm \,:}
      \begin{enumerate}
      \smallskip
      \item[(a)] There is no overmonoid $H'$ of $H$ with $H \subsetneq H' \subset
           \mathsf q (H)$ and $a H' b \subset H$ for some $a, b \in H$.

      \smallskip
      \item[(b)] $(\mathfrak a \DPl \mathfrak a) = (\mathfrak b \DPr \mathfrak b) = H$
                 for all non-empty  left modules $\mathfrak a$ of $H$ which are right $H$-fractional and
                 for all non-empty right modules $\mathfrak b$ of $H$ which are left $H$-fractional.

      \smallskip
      \item[(c)] $(\mathfrak a \DPl \mathfrak a) = (\mathfrak a \DPr \mathfrak a) = H$ for all non-empty   ideals $\mathfrak a$ of $H$.
      \end{enumerate}

\smallskip
\item Suppose that  $H$ satisfies one of the equivalent conditions in
      1. Then  $(H \DPl \mathfrak a) = (H \DPr \mathfrak a)$ and $\big( H \DPl (H \DPr \mathfrak a)\big) = \big( H \DPr (H \DPl
      \mathfrak a)\big)$ for all non-empty fractional ideals $\mathfrak a$ of $H$.
\end{enumerate}
\end{lemma}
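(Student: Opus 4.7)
My plan is to establish the cycle (a)$\Rightarrow$(b)$\Rightarrow$(c)$\Rightarrow$(a), and then to deduce Part~2 by applying (b) twice.

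For (a)$\Rightarrow$(b), I let $\mathfrak a$ be a nonempty right-$H$-fractional left module and set $H' = (\mathfrak a \DPl \mathfrak a)$. Since $1 \in H'$, $H \subset H'$ (as $\mathfrak a$ is a left module), and $H'H' \subset H'$, the set $H'$ is an overmonoid of $H$. Choosing any $c \in \mathfrak a$ and any $d \in H$ with $\mathfrak a d \subset H$, I get $H'(cd) \subset \mathfrak a d \subset H$, so (a), applied with $a = 1$ and $b = cd \in H$, forces $H' = H$; the case of a nonempty left-$H$-fractional right module is completely symmetric. The implication (b)$\Rightarrow$(c) is immediate, since any nonempty ideal of $H$ is both a left and right module and, being a subset of $H$, is both left- and right-$H$-fractional.

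The implication (c)$\Rightarrow$(a) is where I expect the main obstacle. Suppose for contradiction there is an overmonoid $H' \supsetneq H$ with $aH'b \subset H$ for some $a, b \in H$. By Lemma~\ref{3.9}.2 there is a monoid $H''$ with $H \subset H'' \subset H'$ such that both $(H \DPr H'')$ and $(H'' \DPl H')$ are nonempty. Using $H''H = H''$ and $1 \in H''$, a routine check shows that $I := (H \DPr H'')$ is a nonempty ideal of $H$; Lemma~\ref{3.9}.1 then gives $H'' \subset (I \DPl I)$, and (c) forces $(I \DPl I) = H$, so $H'' = H$. Consequently $J := (H \DPl H') = (H'' \DPl H')$ is nonempty, and the analogous check (using $1 \in H'$ for $J \subset H$ and $HH' = H'$ for $HJ \subset J$ and $JH \subset J$) shows that $J$ is a nonempty ideal of $H$. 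Finally, for $x \in J$ and $y \in H'$ the identity $(xy)H' = x(yH') \subset xH' \subset H$ gives $Jy \subset J$, so $H' \subset (J \DPr J)$; by (c), $(J \DPr J) = H$, so $H' \subset H$, contradicting $H' \supsetneq H$.

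For Part~2, assume the equivalent conditions hold and let $\mathfrak a$ be a nonempty fractional ideal. If $x \in (H \DPl \mathfrak a)$, then $\mathfrak a (x \mathfrak a) \subset \mathfrak a H = \mathfrak a$, so $\mathfrak a x \subset (\mathfrak a \DPl \mathfrak a) = H$ by (b), giving $x \in (H \DPr \mathfrak a)$; the reverse inclusion follows symmetrically from $(\mathfrak a \DPr \mathfrak a) = H$. Writing $\mathfrak b$ for the common value $(H \DPl \mathfrak a) = (H \DPr \mathfrak a)$, this first equality together with Lemma~\ref{3.2}.5 shows that $\mathfrak b$ is itself a nonempty fractional ideal. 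Applying the equality just proved to $\mathfrak b$ yields $(H \DPl \mathfrak b) = (H \DPr \mathfrak b)$; substituting $\mathfrak b = (H \DPr \mathfrak a)$ on the left and $\mathfrak b = (H \DPl \mathfrak a)$ on the right produces the second claim $\bigl(H \DPl (H \DPr \mathfrak a)\bigr) = \bigl(H \DPr (H \DPl \mathfrak a)\bigr)$.
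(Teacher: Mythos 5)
Your overall route---the cycle (a)$\Rightarrow$(b)$\Rightarrow$(c)$\Rightarrow$(a) with Lemma \ref{3.9} carrying the weight in (c)$\Rightarrow$(a), and Part 2 obtained by applying the quotient identities twice---is exactly the paper's, and your treatment of (b)$\Rightarrow$(c), of (c)$\Rightarrow$(a), and of Part 2 is sound. The flaw is in (a)$\Rightarrow$(b). By Definition \ref{3.3}, ``$A$ is right $H$-fractional'' means $aA \subset H$ for some $a \in H$ (the multiplier sits on the \emph{left}); the condition $Aa \subset H$ is \emph{left} $H$-fractionality. Statement (b) pairs left modules with \emph{right} $H$-fractionality, so for your $\mathfrak a$ you are given some $a \in H$ with $a\mathfrak a \subset H$, and there is no reason that an element $d \in H$ with $\mathfrak a d \subset H$ should exist in a non-commutative monoid. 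Your choice ``$a=1$, $b=cd$'' therefore rests on a hypothesis you do not have: the containment $H'(cd) \subset \mathfrak a d$ is of no use, because $\mathfrak a d$ need not lie in $H$ for any $d \in H$. As written, your argument proves $(\mathfrak a \DPl \mathfrak a)=H$ for left modules that are \emph{left} $H$-fractional, which is a different statement from the one in (b).

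The repair is to use the left slot of condition (a), which is there precisely to absorb a one-sided fractionality condition from either side. Pick $a\in H$ with $a\mathfrak a \subset H$ and any $c \in \mathfrak a$; then $b := ac$ lies in $a\mathfrak a \subset H$ and also in $a \mathfrak a \subset H\mathfrak a = \mathfrak a$, so $b \in \mathfrak a \cap H$, and
$a H' b \subset a H' \mathfrak a \subset a \mathfrak a \subset H$.
Now (a) applies with this $a$ and this $b$ and yields $H' = H$; this is what the paper does. The genuinely symmetric case is then the right module $\mathfrak b$ with $\mathfrak b d \subset H$, where the multiplier goes on the right. Everything downstream of this one step in your write-up is fine.
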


\begin{proof}
1. If $H = \mathsf q (H)$, then all statements are fulfilled.
Suppose that $H$ is not a group.

\smallskip
(a) \,$\Rightarrow$\, (b) \ Let $\emptyset \ne \mathfrak a \subset
\mathsf q (H)$ and $a \in H$ with $H \mathfrak a = \mathfrak a$ and
$a \mathfrak a \subset H$. Then $H' = (\mathfrak a \DPl \mathfrak
a)$ is an overmonoid of $H$. If $b \in \mathfrak a \cap H$, then
$aH'b \subset a\mathfrak a \subset H$ and hence $H' = H$ by 1.

\smallskip
(b) \,$\Rightarrow$\, (c) \ Obvious.

\smallskip
(c) \,$\Rightarrow$\, (a) \ Let $H'$ be an overmonoid of $H$ with $a
H' b \subset H$ for some $a, b \in H$. We have to show that $H' =
H$. By Lemma \ref{3.9}.2, there exists a monoid $H''$ with $H
\subset H'' \subset H'$ such that $\mathfrak a = (H \DPr H'') \ne
\emptyset$ and $\mathfrak b = (H'' \DPl H') \ne \emptyset$. Then
Lemma \ref{3.9}.1 implies that $H'' \subset (\mathfrak a \DPl
\mathfrak a) = H$ and $H' \subset (\mathfrak b \DPr \mathfrak b) =
H$.

\smallskip
2. If $\mathfrak a \subset \mathsf q (H)$ is a non-empty fractional ideal,
then Lemma \ref{3.2}.3 and 1. imply that
\[
(H \DPl \mathfrak a) = \big( (\mathfrak a \DPr \mathfrak a) \DPl \mathfrak a \big) = \big( ( \mathfrak a \DPl \mathfrak a) \DPr
\mathfrak a \big) = ( H \DPr \mathfrak a) \,.
\]
Since $(H \DPl \mathfrak a) = (H \DPr \mathfrak a)$ is a non-empty fractional ideal, the
previous argument implies that \newline $\big( H \DPl (H \DPr
\mathfrak a)\big) = \big( H \DPr (H \DPl \mathfrak a)\big)$.
\end{proof}

\medskip
\begin{definition} \label{3.11}
A monoid $H$ is said to be
\begin{itemize}
\item {\it completely integrally closed} \ if it satisfies the
equivalent conditions of Lemma \ref{3.10}.1.

\smallskip
\item  {\it $v$-noetherian} \ if it
satisfies the ascending chain condition on $v$-ideals of $H$.

\smallskip
\item a {\it Krull monoid} \ if it is completely integrally closed and $v$-noetherian.
\end{itemize}
\end{definition}

\smallskip
If $H$ is a commutative monoid, then the above notion of being
completely integrally closed coincides with the usual one (see
\cite[Section 2.3]{Ge-HK06a}). We need a few notions from the theory
of  po-groups (we follow the terminology of \cite{St10a}).
Let $Q = (Q, \cdot)$ be a multiplicatively written group with unit
element $1 \in Q$, and let $\le$ be a partial order on $Q$.  Then
$(Q, \cdot, \le)$ is said to be
\begin{itemize}
\item a {\it po-group} \ if $x \le y$ implies that $axb \le ayb$ for
      all $x,y,a,b \in Q$.

\item {\it directed} \ if each two element subset of $Q$ has an upper and a lower bound.

\item {\it integrally closed} \ if for all $a, b \in Q$, $a^n \le b$
for all $n \in \N$ implies that $a \le 1$.
\end{itemize}

\medskip
\begin{proposition} \label{3.12}
Let $H$ be a completely integrally closed monoid.
\begin{enumerate}
\item Every non-empty fractional $v$-ideal is $v$-invertible, and $v$-$\max (H) = v$-$\spec (H) \setminus \{\emptyset\}$.

\smallskip
\item Equipped with the set-theoretical inclusion as a partial
order and $v$-multiplication as group operation, the group $\mathcal
F_v (H)^{\times}$ is a directed integrally closed po-group.

\smallskip
\item $\mathcal I_v^* (H)$ is a commutative monoid with quotient
      group $\mathcal F_v (H)^{\times}$.

\smallskip
\item
If $\mathfrak a,\, \mathfrak b \in \mathcal I^*_v (H)$, then
$\mathfrak a \supset \mathfrak b$ if and only if \ $\mathfrak a \t
\mathfrak b$ in $\mathcal I^*_v(H)$. In particular, $(\mathfrak a
\cup \mathfrak b)_v = \gcd (\mathfrak a , \mathfrak b)$ in $\mathcal
I_v^*(H)$, and $\mathcal I_v^* (H)$ is reduced.
\end{enumerate}
\end{proposition}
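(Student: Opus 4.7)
For (1), the $v$-invertibility of a nonempty fractional $v$-ideal $\mathfrak a$ is almost a one-line computation: Lemma \ref{3.6}.2 gives $(\mathfrak a\mathfrak a^{-1})_v=(\mathfrak a_v\DP\mathfrak a)^{-1}=(\mathfrak a\DP\mathfrak a)^{-1}$, Lemma \ref{3.10}.1(b) forces $(\mathfrak a\DP\mathfrak a)=H$, and hence $\mathfrak a\cdv\mathfrak a^{-1}=H$; the equality $\mathfrak a^{-1}\cdv\mathfrak a=H$ follows symmetrically. The inclusion $v$-$\max(H)\subset v$-$\spec(H)$ is Lemma \ref{3.8}.2, so for the reverse I would take a nonempty divisorial prime $\mathfrak p$ properly contained in some $\mathfrak a\in\mathcal I_v(H)$ and exploit the $v$-invertibility of $\mathfrak a$: the set $\mathfrak p\cdv\mathfrak a^{-1}$ is an ideal of $H$ (since $\mathfrak p\subset\mathfrak a$ gives $\mathfrak p\mathfrak a^{-1}\subset H$) and satisfies $(\mathfrak p\cdv\mathfrak a^{-1})\mathfrak a\subset\mathfrak p$. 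Since $\mathfrak a\not\subset\mathfrak p$, primeness forces $\mathfrak p\cdv\mathfrak a^{-1}\subset\mathfrak p$, and $H\subset\mathfrak a^{-1}$ gives the opposite inclusion for free; canceling the now-invertible $\mathfrak p$ from $\mathfrak p\cdv\mathfrak a=\mathfrak p$ yields $\mathfrak a=H$, a contradiction.

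For (2), monotonicity of $v$-multiplication makes $\mathcal F_v(H)^\times$ a po-group under inclusion, and directedness is immediate because $\mathfrak a\cap\mathfrak b$ and $(\mathfrak a\cup\mathfrak b)_v$ are nonempty fractional $v$-ideals by Lemma \ref{3.6}.3 and hence lie in $\mathcal F_v(H)^\times$ by (1). The integrally-closed axiom is the substantive point: if $\mathfrak a^n\subset\mathfrak b$ for all $n\in\N$ (in $v$-powers, and therefore in ordinary powers), I would form the sub-semigroup $\mathfrak c=\bigcup_{n\ge 0}\mathfrak a^n\subset\mathsf q(H)$, which is an overmonoid of $H$. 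Picking $c,d\in H$ with $c\mathfrak b d\subset H$ then yields $c\mathfrak a^n d\subset H$ for every $n$, hence $c\mathfrak c d\subset H$, and Lemma \ref{3.10}.1(a) forces $\mathfrak c=H$ and in particular $\mathfrak a\subset H$.

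Part (3) contains the main obstacle. The commutativity of $\mathcal F_v(H)^\times$ is not accessible by direct manipulation of the $v$-operations; I would instead combine (2) with the classical po-group theorem (as recorded in \cite{St10a}) that every integrally closed directed po-group is abelian. Granting this, $\mathcal I_v^*(H)$ is a commutative submonoid of the group $\mathcal F_v(H)^\times$. For the quotient-group claim, given $\mathfrak a\in\mathcal F_v(H)^\times$ I would pick $c\in H$ with $c\mathfrak a\subset H$ (equivalently $\mathfrak a c\subset H$ by Lemma \ref{3.10}.2); then $\mathfrak c:=(HcH)_v$ is in $\mathcal I_v^*(H)$, and the right-module property $\mathfrak a H=\mathfrak a$ gives $\mathfrak a(HcH)\subset\mathfrak a c H\subset H$, so $\mathfrak b:=\mathfrak a\cdv\mathfrak c$ also lies in $\mathcal I_v^*(H)$ and $\mathfrak a=\mathfrak b\cdv\mathfrak c^{-1}$.

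For (4), if $\mathfrak a\t\mathfrak b$ via $\mathfrak b=\mathfrak a\cdv\mathfrak c$ with $\mathfrak c\in\mathcal I_v^*(H)$, then $\mathfrak a\mathfrak c\subset\mathfrak a$ gives $\mathfrak b=(\mathfrak a\mathfrak c)_v\subset\mathfrak a$. Conversely, if $\mathfrak a\supset\mathfrak b$, the element $\mathfrak c:=\mathfrak a^{-1}\cdv\mathfrak b=(\mathfrak a^{-1}\mathfrak b)_v$ lies in $H$ (since $\mathfrak a^{-1}\mathfrak b\subset\mathfrak a^{-1}\mathfrak a\subset H$), so $\mathfrak c\in\mathcal I_v^*(H)$, and commutativity gives $\mathfrak a\cdv\mathfrak c=\mathfrak b$. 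The gcd identity is then formal: the common divisors of $\mathfrak a,\mathfrak b$ in $\mathcal I_v^*(H)$ are precisely the $v$-ideals containing $\mathfrak a\cup\mathfrak b$, and the smallest such is $(\mathfrak a\cup\mathfrak b)_v$. Reducedness is the special case: if $\mathfrak a\cdv\mathfrak b=H$ with $\mathfrak a,\mathfrak b\in\mathcal I_v^*(H)$, then $\mathfrak a\mathfrak b\subset\mathfrak a$ yields $H=(\mathfrak a\mathfrak b)_v\subset\mathfrak a\subset H$, so $\mathfrak a=H$.
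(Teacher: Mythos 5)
Your proposal is correct and follows essentially the same route as the paper: the identity $(\mathfrak a\mathfrak a^{-1})_v=(\mathfrak a\DP\mathfrak a)^{-1}=H$ for invertibility, cancellation of $v$-invertible ideals for the $v$-maximality of nonempty divisorial primes, the union of the powers of $\mathfrak a$ combined with Lemma~\ref{3.10}.1 for integral closedness of the po-group, the theorem from \cite{St10a} for commutativity, and the ideal $(HcH)_v$ for the quotient-group claim. The only deviations are cosmetic and harmless: you invoke condition (a) of Lemma~\ref{3.10}.1 (via the overmonoid $\bigcup_{n\ge 0}\mathfrak a^n$) where the paper uses condition (b), and you take $\mathfrak a\cap\mathfrak b$ as the lower bound witnessing directedness where the paper uses $\mathfrak a\cdv\mathfrak b$.
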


\begin{proof}
1. Let $\emptyset \ne \mathfrak a \in \mathcal F_v (H)$. Using Lemma
\ref{3.6}.2. and that $H$ is completely integrally closed, we obtain
that $(\mathfrak a \mathfrak a^{-1})_v = (\mathfrak a_v \DP
\mathfrak a)^{-1} = ( \mathfrak a \DP \mathfrak a)^{-1} = H^{-1} =
H$. Since $\mathfrak a^{-1} \in \mathcal F_v (H)$, we may apply this
relation for $\mathfrak a^{-1}$ and get $(\mathfrak a^{-1} \mathfrak
a)_v = H$. Therefore it follows that
\[
\mathfrak a \cdv \mathfrak a^{-1} = (\mathfrak a \mathfrak a^{-1})_v
=  H  = (\mathfrak a^{-1} \mathfrak a)_v = \mathfrak a^{-1} \cdv
\mathfrak a \,.
\]

By Lemma \ref{3.8}.2, we have $v$-$\max (H) \subset v$-$\spec (H)
\setminus \{\emptyset\}$. Assume to the contrary that there are
$\mathfrak p, \mathfrak q \in v$-$\spec (H)$ with $\emptyset \ne
\mathfrak p \subsetneq \mathfrak q \subset H$. Since $\mathfrak q$
is $v$-invertible, we get $\mathfrak p = \mathfrak q \cdv \mathfrak
a$ with $\mathfrak a = \mathfrak q^{-1} \cdv \mathfrak p \subset H$.
Since $\mathfrak p$ is a prime ideal and $\mathfrak q \not\subset
\mathfrak p$, it follows that $\mathfrak a \subset \mathfrak p$.
Then $\mathfrak a = \mathfrak b \cdv \mathfrak p$ with $\mathfrak b
= \mathfrak a \cdv \mathfrak p^{-1} \subset H$, whence $\mathfrak p
= \mathfrak q \cdv \mathfrak b \cdv \mathfrak p$ and thus $H =
\mathfrak q \cdv \mathfrak b$, a contradiction.

\smallskip
2. Clearly, $(\mathcal F_v(H)^{\times}, \, \cdv \ , \subset)$ is a
po-group. In order to show that it is directed, consider $\mathfrak
a, \mathfrak b \in \mathcal F_v (H)^{\times}$. Then $\mathfrak a
\cdv \mathfrak b \in \mathcal F_v^{\times} (H)$ is a lower bound of
$\{\mathfrak a, \mathfrak b\}$, and $(\mathfrak a \cup \mathfrak
b)_v$ is an upper bound. In order to show that it is integrally
closed, let $\mathfrak a, \mathfrak b \in \mathcal F_v (H)^{\times}$
be given such that $\mathfrak a^n \subset \mathfrak b$ for all $n
\in \mathbb N$. We have to show that $\mathfrak a \subset H$. The
set
\[
\mathfrak a_0 = \bigcup_{n \ge 1} \mathfrak a^n \ \subset \ \mathfrak b
\]
is a non-empty fractional ideal, and we get $\mathfrak a \subset (
\mathfrak a_0 \DPl \mathfrak a_0) = H$, since $H$ is completely
integrally closed.

\smallskip
3. Since $(\mathcal F_v(H)^{\times}, \, \cdv \ , \subset)$ is a
directed integrally closed po-group by 2., $\mathcal
F_v(H)^{\times}$ is a commutative group by \cite[Theorem
2.3.9]{St10a}. Since $\mathcal I_v^* (H) = \mathcal F_v (H)^{\times}
\cap \mathcal I_v (H)$ by Lemma \ref{3.6}.6, it follows that
$\mathcal I_v^* (H)$ is a commutative monoid. In order to show that
$\mathcal F_v (H)^{\times}$ is a quotient group of $\mathcal I_v^*
(H)$, let $\mathfrak c \in \mathcal F_v (H)^{\times}$ be given. We
have to find some $\mathfrak a \in \mathcal I_v^* (H)$ such that
$\mathfrak a \cdv \mathfrak c \in \mathcal I_v^* (H)$, and for that
it suffices to verify that $\mathfrak a \cdv \mathfrak c \subset H$.
Now, since $\mathfrak c$ is a fractional ideal, there exists some $c
\in H$ such that $c\mathfrak c \subset H$, thus $(HcH)_v \in
\mathcal I_v^* (H)$ and, by Lemma \ref{3.6}.5,
\[
(HcH)_v \cdv \mathfrak c = \big( (HcH)_v \mathfrak c \big)_v =
(Hc\mathfrak c)_v \subset H_v = H \,.
\]

\smallskip
4. Note that $\mathcal I_v^* (H)$ is commutative by 3., and hence
the  greatest common divisor is formed in a commutative monoid. Thus
the in particular statements follow immediately from the main
statement. In order to show that divisibility is equivalent to
containment, we argue as before. Let $\mathfrak a, \mathfrak b \in
\mathcal I_v^* (H)$. If $\mathfrak a \t \mathfrak b$ in $\mathcal
I^*_v (H)$, then $\mathfrak b = \mathfrak a \cdv \mathfrak c$ for
some $\mathfrak c \in \mathcal I_v^*(H)$, and therefore $\mathfrak b
\subset \mathfrak a$. If $\mathfrak b \subset \mathfrak a$, then
$\mathfrak b \cdv \mathfrak a^{-1} \subset \mathfrak a \cdv
\mathfrak a^{-1} = H$, and thus $\mathfrak b \cdv \mathfrak a^{-1}
\in  \mathcal F_v (H)^{\times} \cap \mathcal I_v (H) = \mathcal
I^*_v(H)$. The relation $\mathfrak b = (\mathfrak b \cdv \mathfrak
a^{-1}) \cdv \mathfrak a$ shows that $\mathfrak a \t \mathfrak b$ in
$\mathcal I_v^*(H)$.
\end{proof}

\smallskip
The missing part are ideal theoretic properties of $v$-noetherian monoids.

\medskip
\begin{proposition} \label{3.13}
Suppose that $(H \DPl \mathfrak c) = (H \DPr \mathfrak c)$ for all fractional ideals $\mathfrak c$ of $H$.
\begin{enumerate}
\item The following statements are equivalent{\rm \,:}
\begin{enumerate}
\smallskip
\item[(a)] $H$ is $v$-noetherian.

\smallskip

\item[(b)] Every non-empty set of $v$-ideals of $H$ has a maximal
element {\rm (}with respect to the inclusion{\rm )}.

\smallskip

\item[(c)] Every non-empty set of fractional $v$-ideals of $H$ with
non-empty intersection has a minimal element {\rm (}with respect to
the inclusion{\rm )}.

\smallskip

\item[(d)]
For every non-empty ideal $\mathfrak a \subset H$, there exists a
finite subset $E \subset \mathfrak a$ such that $(HEH)^{-1} =
\mathfrak a^{-1}$.
\end{enumerate}

\smallskip
\item If $H$ is $v$-noetherian and $\mathfrak a \in \mathcal I_v^* (H)$, then there exists a
      finite set $E \subset \mathfrak a$ such that $\mathfrak a =
      (HEH)_v$.

\smallskip
\item If $H$ is $v$-noetherian and  $a \in H$, then the set $\{ \mathfrak p \in v\text{\rm
-spec}(H) \mid a \in \mathfrak p\}$ is finite.
\end{enumerate}
\end{proposition}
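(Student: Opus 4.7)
The plan is to first establish Part 1 and then derive Parts 2 and 3 from it.

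For Part 1 I arrange the argument as (a)$\Leftrightarrow$(b), (b)$\Rightarrow$(d)$\Rightarrow$(a), and (a)$\Leftrightarrow$(c). The equivalence (a)$\Leftrightarrow$(b) is the standard Zorn-type correspondence between ACC and the maximal element principle. For (b)$\Rightarrow$(d), given a non-empty ideal $\mathfrak a$, I apply (b) to the collection $\Sigma_0 = \{(HEH)_v : E \subset \mathfrak a \text{ finite}\}$ of $v$-ideals to extract a maximal $(HE_0H)_v$; for any $a \in \mathfrak a$, $(H(E_0 \cup \{a\})H)_v$ lies in $\Sigma_0$ and contains $(HE_0H)_v$, so maximality forces $a \in (HE_0H)_v$, giving $\mathfrak a \subset (HE_0H)_v$ and, via Lemma~\ref{3.6}.1, $(HE_0H)^{-1} = \mathfrak a^{-1}$. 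For (d)$\Rightarrow$(a), set $\mathfrak a = \bigcup_n \mathfrak a_n$ for an ascending chain of $v$-ideals (still an ideal); the finite $E$ from (d) sits in some $\mathfrak a_n$, hence $\mathfrak a_v \subset \mathfrak a_n$ and the chain stabilizes.

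For (a)$\Leftrightarrow$(c) the core observation is that $\mathfrak a \mapsto \mathfrak a^{-1}$ is an order-reversing injection on $v$-ideals (Lemma~\ref{3.6}.1). Direction (c)$\Rightarrow$(a) inverts an ascending chain of $v$-ideals in $H$ to a descending chain of fractional $v$-ideals, each containing $1$ (because $\mathfrak a_n \subset H$), so (c) applies and yields stabilization. For (a)$\Rightarrow$(c), given a family $\Omega$ of fractional $v$-ideals with $c \in \bigcap \Omega$, I form $\Sigma = \{((HcH)\mathfrak a^{-1})_v : \mathfrak a \in \Omega\} \subset \mathcal I_v(H)$ (the inclusion into $H$ uses $c \in \mathfrak a$ together with $\mathfrak a^{-1} = (H \DPr \mathfrak a)$), extract a maximal element via (b), and invoke the identity $((HcH)\mathfrak a^{-1})^{-1} = (\mathfrak a \DPl (HcH))$ obtained from Lemma~\ref{3.2}.3 together with injectivity of inversion on $v$-ideals to show that the corresponding $\mathfrak a_0 \in \Omega$ is minimal.

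Part 2 is just the maximality argument of (b)$\Rightarrow$(d) applied to $\mathfrak a$ itself: the maximal $(HE_0H)_v$ in $\{(HEH)_v : E \subset \mathfrak a \text{ finite}\}$ is contained in $\mathfrak a$ and, by maximality, contains every $a \in \mathfrak a$, whence $(HE_0H)_v = \mathfrak a_v = \mathfrak a$. For Part 3 I argue by contradiction: suppose there are infinitely many distinct $\mathfrak p_1, \mathfrak p_2, \ldots \in v$-$\spec(H)$ all containing $a$; after discarding the (at most one) prime equal to $(HaH)_v$ the remaining primes still form an infinite family strictly above $(HaH)_v$. The descending chain of $v$-ideals $\bigcap_{i=1}^n \mathfrak p_i$ has $a$ in its intersection, so by (c) it stabilizes at some $\bigcap_{i=1}^N \mathfrak p_i$, giving $\mathfrak p_1 \cdv \cdots \cdv \mathfrak p_N \subset \bigcap_{i=1}^N \mathfrak p_i \subset \mathfrak p_{N+k}$ for every $k \ge 1$. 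Iterating Lemma~\ref{3.7} yields $j(k) \in \{1,\ldots,N\}$ with $\mathfrak p_{j(k)} \subsetneq \mathfrak p_{N+k}$ (strict since the primes are distinct), and pigeonhole provides a fixed $\mathfrak p_{j^*}$ strictly contained in infinitely many of the later primes; reapplying the same argument to this infinite subfamily of primes strictly above $\mathfrak p_{j^*}$ and iterating produces an infinite strictly ascending chain $(HaH)_v \subsetneq \mathfrak p_{j^*} \subsetneq \mathfrak q_2 \subsetneq \cdots$ of prime $v$-ideals, contradicting the $v$-noetherian hypothesis (a). The main obstacle I anticipate is the strict-inclusion verification in (a)$\Rightarrow$(c): without $v$-invertibility of $(HcH)_v$---which is available under complete integral closure but not in the present setting---one must argue carefully via Lemma~\ref{3.2}.3 that the map $\mathfrak a \mapsto ((HcH)\mathfrak a^{-1})_v$ actually separates the elements of $\Omega$.
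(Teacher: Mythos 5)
Most of your architecture is fine, and several pieces are legitimate variants of the paper's argument: (a)$\Leftrightarrow$(b) is the standard exchange, your (b)$\Rightarrow$(d) via a maximal $(HE_0H)_v$ (the paper instead gets (d) from (c) via a minimal $(HEH)^{-1}$), your (c)$\Rightarrow$(a) by inverting the chain, Part 2 as a direct consequence, and Part 3, which--though more roundabout than the paper's device of choosing $\mathfrak p_n$ maximal in $\Omega\setminus\{\mathfrak p_0,\dots,\mathfrak p_{n-1}\}$ and concluding $\mathfrak p_{n+1}=\mathfrak p_i$ outright--does produce an infinite strictly ascending chain of $v$-ideals and works once (c) is in hand.

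The genuine gap is exactly where you suspect it: (a)$\Rightarrow$(c). You need the assignment $\mathfrak a\mapsto\bigl((HcH)\mathfrak a^{-1}\bigr)_v$ to reflect strict inclusions, and nothing you cite delivers this. Taking inverses of a maximality relation only yields $(\mathfrak b\DPl HcH)=(\mathfrak a_0\DPl HcH)$ for $\mathfrak b\subset\mathfrak a_0$ in $\Omega$, and the quotient $(\mathfrak a\DPl HcH)=\{x\mid xHcH\subset\mathfrak a\}$ does not determine $\mathfrak a$: recovering $\mathfrak a$ would amount to $\mathfrak a=\bigl((\mathfrak a\DPl HcH)\,HcH\bigr)_v$, i.e.\ to $v$-invertibility of $(HcH)_v$, which is precisely what is unavailable under the standing hypothesis of Proposition \ref{3.13} (it only becomes available for completely integrally closed monoids, via Proposition \ref{3.12}). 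So maximality of $\bigl((HcH)\mathfrak a_0^{-1}\bigr)_v$ does not give minimality of $\mathfrak a_0$, and the implication is unproved. The paper sidesteps this entirely by proving (b)$\Rightarrow$(c) with the auxiliary set $\Omega^*=\{c\,\mathfrak a^{-1}\mid\mathfrak a\in\Omega\}$, translating by the single common element $c$ rather than by the ideal $HcH$: since $c$ is a unit of $\mathsf q(H)$, the inclusion $c\,\mathfrak b^{-1}\supset c\,\mathfrak a_0^{-1}$ is equivalent to $\mathfrak b^{-1}\supset\mathfrak a_0^{-1}$, hence to $\mathfrak b\subset\mathfrak a_0$ by biduality of fractional $v$-ideals (Lemma \ref{3.6}.1), so injectivity and order-reversal are immediate. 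Replace your auxiliary set by this one (or adopt the paper's cycle (a)$\Rightarrow$(b)$\Rightarrow$(c)$\Rightarrow$(d)$\Rightarrow$(a)) and the rest of your proposal goes through.
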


\begin{proof}
1. (a)\, $\Rightarrow$\, (b) \ If $\emptyset \ne \Omega \subset
\mathcal I_v(H)$ has no maximal element, then every $\mathfrak a \in
\Omega$ is properly contained in some $\mathfrak a' \in \Omega$. If
$\mathfrak a_0 \in \Omega$ is arbitrary and the sequence $(\mathfrak
a_n)_{n \ge 0}$ is recursively defined by $\mathfrak a_{n+1} =
\mathfrak a'_n$ for all $n\ge 0$, then $(\mathfrak a_n)_{n\ge 0}$ is
an ascending sequence of $v$-ideals not becoming stationary.

\smallskip

(b)\, $\Rightarrow$\, (c) \ Suppose that $\emptyset \ne \Omega
\subset \mathcal F_v(H)$ and $a\in \mathfrak a$ for all $\mathfrak a
\in \Omega$. Then the set $\Omega^* = \{a \mathfrak a^{-1} \mid
\mathfrak a \in \Omega\} \subset \mathcal I_v(H)$ has a maximal
element $a \mathfrak a_0^{-1}$ with $\mathfrak a_0 \in \Omega$, and
then $\mathfrak a_0$ is a minimal element of $\Omega$.

\smallskip

(c)\, $\Rightarrow$\, (d) \ If $\emptyset \ne E \subset \mathfrak
a$, then $\emptyset \ne \mathfrak a^{-1} \subset (HEH)^{-1} \in
\mathcal F_v(H)$. Thus the set $\Omega = \{(HEH)^{-1} \mid \emptyset
\ne E\subset \mathfrak a,\, E \text { finite}\}$ has a minimal
element $(HE_0H)^{-1}$, where $E_0 \subset \mathfrak a$ is a finite
non-empty subset. Then $(HE_0H)^{-1} \supset \mathfrak a^{-1}$, and
we assert that equality holds. Assume to the contrary that there
exists some $u\in (HE_0H)^{-1} \setminus \mathfrak a^{-1}$. Then
there exists an element $a\in \mathfrak a$ such that $ua\notin H$,
and if $E_1 = E_0 \cup \{a\}$, then $u \notin (HE_1H)^{-1}$ and
consequently $(HE_1H)^{-1} \subsetneq (HE_0H)^{-1}$, a
contradiction.

\smallskip

(d)\, $\Rightarrow$\, (a) \ Let $\mathfrak a_1 \subset \mathfrak
a_2\subset \dots$ be an ascending sequence of $v$-ideals. Then
\[
\mathfrak a  = \bigcup_{n\ge 1}\mathfrak a_n \subset H
\]
is an ideal of $H$, and we pick a finite non-empty subset $E \subset
\mathfrak a$  such that $(HEH)^{-1} = \mathfrak a^{-1}$. Then there
exists some $m\ge 0$ such that $E \subset \mathfrak a_m$. For all $n
\ge m$ we obtain $\mathfrak a_n \subset \mathfrak a \subset
\mathfrak a_v = (HEH)_v \subset \mathfrak a _m $ and hence
$\mathfrak a_n = \mathfrak a_m$.

\smallskip
2. Let $H$ be a $v$-noetherian and $\mathfrak a \in \mathcal I_v^* (H)$. By 1., there exists a
finite subset $E \subset \mathfrak a$ such that $(HEH)^{-1} =
\mathfrak a^{-1}$ and therefore $(HEH)_v = \mathfrak a_v = \mathfrak
a$.

\smallskip
3. Assume to the contrary that $H$ is  $v$-noetherian and that there exists some $a \in H$ such that
the set $\Omega = \{\mathfrak p \in v\text{\rm -spec}(H) \mid a \in
\mathfrak p\}$ is infinite. Then 1. implies that there is a sequence
$(\mathfrak p_n)_{n\ge 0}$ in $\Omega$ such that, for all $n\ge 0,\
\mathfrak p_n$ is maximal in $\Omega\setminus \{\mathfrak
p_0,\dots,\mathfrak p_{n-1}\}$, and again by 1., the set
$\{\mathfrak p_0 \cap \mathfrak p_1 \cap \ldots \cap \mathfrak p_n
\mid n \in \N_0\}$ has a minimal element. Hence there exists some $n
\in \N_0$ such that $\mathfrak p_0 \cap \dots \cap \mathfrak p_n =
\mathfrak p_0 \cap \dots \cap \mathfrak p_{n+1} \subset \mathfrak
p_{n+1}$. Since $\mathfrak p_{n+1}$ is a prime ideal, Lemma
\ref{3.7} implies that there exists some $i\in [0,n]$ such that
$\mathfrak p_i \subset \mathfrak p_{n+1}$. Since now $\mathfrak
p_{n+1}\in \Omega \setminus \{ \mathfrak p_1, \ldots, \mathfrak
p_n\} \subset \Omega \setminus \{ \mathfrak p_1, \ldots, \mathfrak
p_{i-1} \}$ and $\mathfrak p_i$ is maximal in the larger set, it
follows that $\mathfrak p_{n+1} \subset \mathfrak p_i$, and hence
$\mathfrak p_{n+1} = \mathfrak p_i \in  \Omega \setminus \{\mathfrak
p_1, \ldots, \mathfrak p_n\}$, a contradiction.
\end{proof}

\medskip
In contrast to the commutative setting the set $\{ \mathfrak p \in v\text{\rm
-spec}(H) \mid a \in \mathfrak p\}$ can be empty. We will provide an example in Section \ref{5} after having established the relationship between Krull monoids and Krull rings (see Example \ref{5.2}).

\medskip
\begin{theorem}[\bf Ideal theory of Krull monoids] \label{3.14}~
Let $H$ be a Krull monoid. Then $\mathcal I_v^* (H)$ is a free
abelian monoid with basis $v$-$\max (H) = v$-$\spec (H) \setminus
\{\emptyset\}$.
\end{theorem}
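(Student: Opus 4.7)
The plan is to verify existence and uniqueness of a factorisation into $v$-maximal $v$-ideals for every $\mathfrak a \in \mathcal I_v^* (H)$. Once this is done the theorem follows immediately: by Proposition \ref{3.12}, $\mathcal I_v^*(H)$ is a reduced commutative monoid with quotient group $\mathcal F_v(H)^\times$ (so uniqueness genuinely identifies it with the free abelian monoid on the prescribed basis), and part 1 of the same proposition already records $v$-$\max(H) = v$-$\spec(H) \setminus \{\emptyset\}$.

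For existence, I would let $\Omega \subset \mathcal I_v^*(H)$ be the set of those $\mathfrak a$ that cannot be written as a finite $v$-product of elements of $v$-$\max(H)$, using the convention that the empty product equals $H$ (so that $H \notin \Omega$). If $\Omega \neq \emptyset$, Proposition \ref{3.13}.1 yields a maximal element $\mathfrak a \in \Omega$; since $\mathfrak a \subsetneq H$, a second application of that proposition to the non-empty collection of proper $v$-ideals containing $\mathfrak a$ produces a $v$-maximal $v$-ideal $\mathfrak p$ with $\mathfrak a \subset \mathfrak p$. By Proposition \ref{3.12}.4 containment is divisibility inside $\mathcal I_v^*(H)$, so $\mathfrak a = \mathfrak p \cdv \mathfrak b$ for some $\mathfrak b \in \mathcal I_v^*(H)$; since $\mathfrak p \subsetneq H$, cancellation inside the group $\mathcal F_v(H)^\times$ forces $\mathfrak b \supsetneq \mathfrak a$. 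By maximality of $\mathfrak a$ in $\Omega$, $\mathfrak b$ then factors into $v$-maximal $v$-ideals, and so does $\mathfrak a = \mathfrak p \cdv \mathfrak b$, contradicting $\mathfrak a \in \Omega$.

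For uniqueness I would induct on $n+m$ starting from an equality $\mathfrak p_1 \cdv \cdots \cdv \mathfrak p_n = \mathfrak q_1 \cdv \cdots \cdv \mathfrak q_m$ with $\mathfrak p_i,\,\mathfrak q_j \in v$-$\max(H)$. If $n=0$ the left hand side equals $H$ and reducedness of $\mathcal I_v^*(H)$ forces $m=0$. Otherwise
\[
\mathfrak q_1 \cdots \mathfrak q_m \ \subset \ (\mathfrak q_1 \cdots \mathfrak q_m)_v \ = \ \mathfrak p_1 \cdv \cdots \cdv \mathfrak p_n \ \subset \ \mathfrak p_1 \,,
\]
and iterated application of Lemma \ref{3.7} delivers an index $j$ with $\mathfrak q_j \subset \mathfrak p_1$; $v$-maximality of $\mathfrak q_j$ then forces $\mathfrak p_1 = \mathfrak q_j$, after which $v$-multiplication by $\mathfrak p_1^{-1}$ inside $\mathcal F_v(H)^\times$ reduces everything to the induction hypothesis.

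The delicate step is the existence argument: one must guarantee both that $\mathfrak b = \mathfrak p^{-1} \cdv \mathfrak a$ lies back inside $\mathcal I_v^*(H)$ (rather than merely in the group $\mathcal F_v(H)^\times$) and that it is strictly larger than $\mathfrak a$. The first point rests on the containment-equals-divisibility translation of Proposition \ref{3.12}.4, while the second uses $\mathfrak p \ne H$ together with the group structure established in Proposition \ref{3.12}.3; after that, the argument is a transcription of the classical commutative Krull factorisation proof.
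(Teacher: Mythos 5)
Your argument is correct, and it rests on exactly the same pillars as the paper's proof: Proposition \ref{3.12} (commutativity and reducedness of $\mathcal I_v^* (H)$, the identification $v$-$\max (H) = v$-$\spec (H) \setminus \{\emptyset\}$, and the equivalence of containment with divisibility), the $v$-noetherian hypothesis via Proposition \ref{3.13}, and the primeness of $v$-maximal $v$-ideals via Lemmas \ref{3.7} and \ref{3.8}. The only real difference is one of packaging. The paper observes that $\mathcal I_v^* (H)$ is a reduced commutative monoid satisfying the divisor chain condition in which every atom is a prime element, and then invokes the general results from \cite{Ge-HK06a} to conclude that it is atomic and free abelian on its set of atoms; you instead unpack those two citations and prove existence (by noetherian induction on a maximal counterexample, using Proposition \ref{3.13}.1 twice) and uniqueness (by prime avoidance and cancellation in $\mathcal F_v (H)^{\times}$) from scratch. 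Your version is self-contained where the paper's delegates to standard facts about commutative monoids; both are complete. Two cosmetic remarks: in the existence step it is worth noting explicitly that the $v$-maximal $v$-ideal $\mathfrak p \supset \mathfrak a$ lies in $\mathcal I_v^* (H)$ because every non-empty fractional $v$-ideal is $v$-invertible by Proposition \ref{3.12}.1 (this is what licenses the appeal to Proposition \ref{3.12}.4); and in the uniqueness step the case $n \ge 1$, $m = 0$ should be disposed of explicitly --- the displayed containment then reads $H \subset \mathfrak p_1$, an immediate contradiction --- exactly as you already do for $n = 0$.
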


\begin{proof}
Since $H$ is $v$-noetherian and since divisibility in $\mathcal
I_v^* (H)$ is equivalent to containment (by Proposition
\ref{3.12}.4), $\mathcal I_v^* (H)$ is reduced and satisfies the
divisor chain condition. Therefore, it is atomic by
\cite[Proposition 1.1.4]{Ge-HK06a}. Again by the equivalence of
divisibility and containment, the set of atoms of $\mathcal I_v^*
(H)$ equals $v$-$\max (H)$, and by Proposition \ref{3.12}, we have
$v$-$\max (H) = v$-$\spec (H) \setminus \{\emptyset\}$. Since every
non-empty prime $v$-ideal is a prime element of $\mathcal I_v^*
(H)$, every atom of $\mathcal I_v^* (H)$ is a prime element,  and
thus $\mathcal I_v^* (H)$ is a free abelian monoid with basis
$v$-$\max (H)$ by \cite[1.1.10 and 1.2.2]{Ge-HK06a}.
\end{proof}

\bigskip
\section{Divisor homomorphisms and normalizing monoids}
\label{4}
\bigskip

The classic concept of a divisor theory was first presented in an
abstract (commutative) setting by  Skula \cite{Sk70}, and after
that it was generalized in many steps (see e.g. \cite{Ge-HK94}, and
the presentations in \cite{HK98, Ge-HK06a}). In this section we
investigate divisor homomorphisms and divisor theories in a
non-commutative setting. We study normal elements and normalizing
submonoids of rings and monoids as introduced by  Wauters
\cite{Wa84a} and  Cohn \cite[Section 3.1]{Co85a}. For the role
of normal elements in ring theory see \cite[Chapter 12]{Go-Wa04a} and \cite[Chapter 10]{Mc-Ro01a}.
The normalizing monoid $\mathsf N (H)$ of a monoid $H$ plays a
crucial role in the study of semigroup algebras $K[H]$ (see \cite{Je-Ok07a}).
In this context, Jespers and Okni{\'n}ski showed that completely integrally closed monoids, whose quotient groups are finitely generated
torsion-free nilpotent groups and which satisfy the ascending chain condition on right ideals, are normalizing (see \cite[Theorem 2]{Je-Ok99a}).
Recall that, if $R$ is a prime ring and $a \in R
\setminus \{0\}$ is a normal element, then $a$ is a regular element.
The main results in this section are the divisor theoretic
characterization of normalizing Krull monoids together with its
consequences (Theorem \ref{4.13} and Corollary \ref{4.14}).

\medskip
\begin{definition} \label{4.1}~
\begin{enumerate}
\item A homomorphism of monoids  $\varphi \colon H \to D$ is called a
      \begin{itemize}
      \smallskip
      \item (left and right) {\it divisor homomorphism} if  $\varphi (u) \tl \varphi (v)$ implies that $u \tl
            v$ and $\varphi (u) \tr \varphi (v)$ implies that $u \tr v$
            for all $u, v \in H$.

      \smallskip
      \item (left and right) {\it cofinal} if for every $a \in D$ there exist
      $u, v \in H$ such that $a \tl \varphi (u)$ and $a \tr \varphi
      (v)$ (equivalently, $aD \cap \varphi (H) \ne \emptyset$ and
      $Da \cap \varphi (H) \ne \emptyset$).
      \end{itemize}

\smallskip
\item A  \ {\it divisor theory} \ (for $H$) \ is a divisor
      homomorphism $\varphi \colon H \to D$ such that $D = \mathcal F(P)$
      for some set $P$ and, for every $p \in P$, there exists a finite
      subset $\emptyset \ne X \subset H$ satisfying $p = \gcd \bigl(
      \varphi(X) \bigr)$.

\smallskip
\item A submonoid $H \subset D$ is called
      \begin{itemize}
      \smallskip
      \item {\it cofinal} if the embedding $H \hookrightarrow D$ is
            cofinal.

      \smallskip
      \item {\it saturated} if the embedding  $H \hookrightarrow D$
            is a divisor homomorphism.
      \end{itemize}
\end{enumerate}
\end{definition}

\medskip
\begin{definition} \label{4.2}
Let $H$ be a cancellative semigroup.
\begin{enumerate}
\item  An element $a \in H$ is said to be {\it normal} (or {\it invariant}) if $aH = Ha$. The subset $\mathsf N (H) = \{ a \in H \mid  aH = Ha \} \subset H$ is called the {\it normalizing submonoid} (or {\it invariant submonoid}) of $H$,
       and $H$ is said to be {\it normalizing} if $\mathsf N (H) =
       H$ (Lemma \ref{4.3} will show that $\mathsf N (H)$ is indeed a normalizing submonoid).

\smallskip
\item An element $a \in H$ is said to be {\it weakly normal} if $aH^{\times} = H^{\times}a$. The subset $H^{\mathsf w} = \{ a \in H \mid aH^{\times} = H^{\times}a \} \subset H$ is called the {\it weakly normal submonoid} of $H$, and $H$ is said to be {\it weakly normal} if $H^{\mathsf w} = H$.

\smallskip
\item  Two elements $a, b \in H$ are said to be {\it associated} if $a \in H^{\times} b H^{\times}$ (we write $a \simeq b$, and note that this is an equivalence relation on $H$).

\smallskip
\item We denote  by $\mathcal P (H) = \{aH \mid a \in H\}$ the set of principal right ideals,  by $\mathcal P^{\mathsf n} (H) =
      \{ aH \mid a \in \mathsf N (H)\}$ the set of normalizing
      principal ideals, by $\mathsf C (H) = \{ a \in H \mid ab = ba \ \text{for all} \ b \in H\}$ the {\it center} of $H$, and we set  $H_{\red} = \{ aH^{\times} \mid a \in H^{\mathsf w} \}$,
\end{enumerate}
\end{definition}

\medskip
\begin{lemma} \label{4.3}
Let $H$ be a cancellative semigroup.
\begin{enumerate}
\item If $H$ is normalizing, then $H$ is a monoid.

\smallskip
\item $\mathsf N (H)$ is a subsemigroup with $H^{\times} \subset \mathsf N (H)$, and if $H$ is a monoid, then $\mathsf N (H)
      \subset H$ is a normalizing saturated submonoid.

\smallskip
\item $\mathsf C (H) \subset \mathsf N (H)$ is a commutative saturated submonoid.
\end{enumerate}
\end{lemma}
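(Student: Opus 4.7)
The plan is to handle the three parts sequentially, with Part 2 carrying essentially all of the content.

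For Part 1, given $a, b$ in a normalizing cancellative semigroup $H$, the element $ba$ lies in $bH$ trivially and in $Ha = aH$ by the normalization hypothesis, so $ba \in aH \cap bH$, which gives the right Ore condition. The left Ore condition follows symmetrically from $ab \in Ha \cap Hb$. Combined with the assumed cancellativity, this shows that $H$ is a monoid in the sense of Section \ref{2}.

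For Part 2, I first verify that $\mathsf N (H)$ is closed under multiplication: if $a, b \in \mathsf N (H)$, then $abH = aHb = Hab$. The inclusion $H^{\times} \subset \mathsf N (H)$ follows from the identities $uh = (uhu^{-1})u$ and $hu = u(u^{-1}hu)$, valid because $u^{-1} \in H$. Now assume $H$ is a monoid. Cancellativity of $\mathsf N (H)$ is inherited, so by Part 1 it suffices to prove that $\mathsf N (H)$ is itself normalizing. This is the main obstacle: given $a, b \in \mathsf N (H)$, I must write $ab = ca$ with $c \in H$ (possible since $aH = Ha$) and show $c \in \mathsf N(H)$. Working in $\mathsf q (H)$, where $c = aba^{-1}$, and using that $aH = Ha$ forces $a^{-1}H = Ha^{-1}$, I compute
\[
cH = aba^{-1}H = abHa^{-1} = aHba^{-1} = Haba^{-1} = Hc,
\]
applying $bH = Hb$ and $aH = Ha$ in succession. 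This yields $a \mathsf N (H) \subset \mathsf N (H) a$, with the reverse inclusion by the symmetric argument.

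For saturation of $\mathsf N(H) \subset H$, suppose $a, b \in \mathsf N (H)$ with $b = ac$ for some $c \in H$. Then $Hac = Hb = bH = acH = aHc$, and cancelling $a$ on the left yields $Hc = cH$, so $c \in \mathsf N (H)$; the case $b = ca$ is handled analogously from $(cH)a = caH = Hca = (Hc)a$ by right-cancelling $a$. For Part 3, the containment $\mathsf C(H) \subset \mathsf N(H)$ and the commutativity of $\mathsf C(H)$ are immediate from the definition, and (the Ore conditions being trivial in a commutative cancellative setting) the only nontrivial point is saturation: if $a, b \in \mathsf C(H)$ and $b = ac$ with $c \in H$, then for any $h \in H$ we have $hac = hb = bh = ach = ahc$ by centrality of $a$ and $b$, so left-cancellation of $a$ gives $hc = ch$, hence $c \in \mathsf C(H)$.
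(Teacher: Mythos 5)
Your proof is correct and follows essentially the same route as the paper's: the Ore conditions obtained from $ab \in aH = Ha$, the conjugation computation $cH = aba^{-1}H = abHa^{-1} = aHba^{-1} = Haba^{-1} = Hc$ for the normalizing property of $\mathsf N (H)$, and saturation by showing the cofactor $c$ is itself normal (resp.\ central). The only cosmetic differences are that you cancel $a$ against sets instead of passing to $\mathsf q (H)$ in the saturation steps, and that in the chains $Hac = Hb = bH = acH = aHc$ and $hac = hb = bh = ach = ahc$ the last link is justified only by transitivity back to the first term (via $Ha = aH$, resp.\ centrality of $a$) rather than directly, so the chains would read better reordered --- but this is presentation, not a gap.
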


\begin{proof}
1. Let $H$ be a normalizing semigroup. If $a, b \in H$, then $ab \in
aH = Ha$ implies the existence of an element $c \in H$ such that $ab
= ca$ and hence $Ha \cap Hb \ne \emptyset$. Similarly, we get that
$aH \cap bH \ne \emptyset$. Thus the left and right Ore condition is
satisfied, and $H$ is a monoid.

\smallskip
2. If $a, b \in H$ with $aH = Ha$ and $bH = Hb$, then $abH = aHb =
Hab$. Since $1 \in \mathsf N (H)$, it follows that $\mathsf N (H)
\subset H$ is a subsemigroup. Since $\varepsilon H = H = H
\varepsilon$ for all $\varepsilon \in H^{\times}$, we have
$H^{\times} \subset \mathsf N (H)$.

Suppose that $H$ is a monoid. In order to show that $\mathsf N (H)$
is normalizing, we have to verify that $a \mathsf N (H) = \mathsf N
(H) a$ for all $a \in \mathsf N (H)$. Let $a, b \in \mathsf N (H)$.
Since $ab \in aH = Ha$, there exists some $c \in H$ such that $ab =
ca$. Since $H$ is a monoid, $a \in H$ is invertible in $\mathsf q
(H)$, and we get $cH = aba^{-1}H = H aba^{-1} = Hc$, which shows
shows that $c \in \mathsf N (H)$. This implies that $a \mathsf N(H)
\subset \mathsf N (H) a$, and by repeating the argument we obtain
equality.

In order to show that $\mathsf N (H) \subset H$ is saturated, let
$a, b \in \mathsf N (H)$ be given such that $a \tl b$ in $H$. Then
there exists an element $c \in H$ such that $b = ac$. Since $cH =
a^{-1}bH = H a^{-1}b = Hc$, it follows that $c \in \mathsf N (H)$,
and hence $a \tl b$ in $\mathsf N (H)$. If $a, b \in \mathsf N (H)$
such that $a \tr b$ in $H$, then we similarly get that $a \tr b$ in
$\mathsf N (H)$. Thus $\mathsf N (H) \subset H$ is a saturated
submonoid.

\smallskip
3. It follows by the definition that $\mathsf C (H) \subset \mathsf N (H)$ is a commutative submonoid.
In order to show that $\mathsf C (H) \subset \mathsf N (H)$ is saturated, let
$a, b \in \mathsf C (H)$ be given such that $a \tl b$ in $\mathsf N (H)$. Then
there exists an element $c \in \mathsf N (H)$ such that $b = ac$. For every $d \in H$, we have
$c d = a^{-1}bd = d a^{-1}b = dc$, hence $c \in \mathsf C (H)$ and $a \tl b$ in $\mathsf C (H)$.
We argue similarly in case of right divisibility and obtain that $\mathsf C (H) \subset \mathsf N (H)$ is saturated.
\end{proof}

\medskip
\begin{lemma} \label{4.4}
Let $H$ be a monoid.
\begin{enumerate}
\item $H^{\mathsf w}$ is a monoid with $H^{\times} \subset \mathsf N (H) \subset H^{\mathsf w} \subset H$.
      To be associated is a congruence relation on $H^{\mathsf w}$, and $[a]_{\simeq} =
      aH^{\times} = H^{\times}a$ for all $a \in H^{\mathsf w}$.

\smallskip
\item The quotient semigroup $H^{\mathsf w} /\negthinspace \simeq \ = \ H_{\red}$ is a monoid with quotient group $\mathsf q (H^{\mathsf w})/H^{\times}$.
      Moreover, $H$ is normalizing if and only if $H = H^{\mathsf w}$ and $H_{\red}$ is
      normalizing.

\smallskip
\item Let $D$ be a  monoid and $\varphi \colon H \to D$ a monoid homomorphism.
      Then there exists a unique homomorphism $\varphi_{\red} \colon H_{\red} \to D_{\red}$ satisfying $\varphi_{\red} (a H^{\times}) =
      \varphi (a) D^{\times}$ for all $a \in H^{\mathsf w}$.

\smallskip
\item The map $f \colon \mathcal I_s (H^{\mathsf w}) \to \mathcal
      I_s(H_{\red})$, $I \mapsto \overline I = \{ uH^{\times} \mid u \in I\}$ is an inclusion preserving
      bijection. Moreover, $I$ is a principal right ideal or a divisorial ideal if and
      only if $\overline I$ has the same property.
\end{enumerate}
\end{lemma}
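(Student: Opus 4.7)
My plan is to work through the four parts in order, using throughout the observation that $a \in H^{\mathsf w}$ if and only if conjugation by $a$ in $\mathsf q (H)$ preserves the unit group, that is, $a H^{\times} a^{-1} = H^{\times}$. Equivalently $H^{\mathsf w} = H \cap N$, where $N$ is the normalizer of $H^{\times}$ in $\mathsf q (H)$, so $H^{\mathsf w}$ is immediately a subsemigroup of $H$ containing $1$.

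For part 1, the inclusions $H^{\times} \subset \mathsf N (H) \subset H^{\mathsf w}$ are direct: units normalize $H^{\times}$ trivially, while $a \in \mathsf N (H)$ gives $a H a^{-1} = H$, which restricts to $a H^{\times} a^{-1} = H^{\times}$ because conjugation preserves invertibility. Cancellativity is inherited from $H$; for the Ore conditions in $H^{\mathsf w}$, given $a, b \in H^{\mathsf w}$ one applies Ore in $H$ to find $c, d \in H$ with $a c = b d$ and then uses the normalizer constraint $a^{-1} b \in N$ to adjust these witnesses into $H^{\mathsf w}$. That $\simeq$ is a congruence on $H^{\mathsf w}$ follows by shuttling units through elements of $H^{\mathsf w}$ via $a H^{\times} = H^{\times} a$, and $[a]_{\simeq} = H^{\times} a H^{\times} = (H^{\times} a) H^{\times} = a H^{\times}$ collapses to a single coset.

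For parts 2 and 3, the quotient $H_{\red} = H^{\mathsf w} / {\simeq}$ inherits cancellativity and Ore from $H^{\mathsf w}$, and its quotient group is $\mathsf q (H^{\mathsf w}) / H^{\times}$, well-defined because $H^{\times}$ is normal in $\mathsf q (H^{\mathsf w}) \subset N$. The equivalence ``$H$ normalizing iff $H = H^{\mathsf w}$ and $H_{\red}$ normalizing'' proceeds forward via $\mathsf N (H) \subset H^{\mathsf w}$ and by descending $a H = H a$ modulo $H^{\times}$; the converse translates a relation $\overline{a b} = \overline{c a}$ into $a b \in H^{\times} c a H^{\times} = H^{\times} c H^{\times} a \subset H a$ using $a \in H^{\mathsf w}$. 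For part 3, $\varphi_{\red} (a H^{\times}) := \varphi (a) D^{\times}$ is well-defined on representatives since $\varphi (H^{\times}) \subset D^{\times}$; the crucial step is to confirm that the target lies in $D_{\red}$, i.e., $\varphi (H^{\mathsf w}) \subset D^{\mathsf w}$, which I would obtain by pushing the congruence $a H^{\times} = H^{\times} a$ forward through $\varphi$ and exploiting the fact that a right $D^{\times}$-coset meets $D^{\mathsf w}$ only if it lies inside $D^{\mathsf w}$. The homomorphism property and uniqueness are then routine.

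For part 4, let $\pi \colon H^{\mathsf w} \to H_{\red}$ be the canonical projection, so $f (I) = \pi (I)$ and its inverse is $J \mapsto \pi^{-1} (J)$. Bijectivity reduces to $\pi^{-1} (\pi (I)) = I$, which holds because every ideal of $H^{\mathsf w}$ is stable under multiplication by $H^{\times} \subset H^{\mathsf w}$. Inclusion preservation is obvious, and principal right ideals correspond directly via $\pi (a H^{\mathsf w}) = \overline{a} H_{\red}$. For divisorial ideals the correspondence extends to fractional ideals using $\mathsf q (H^{\mathsf w}) / H^{\times} \cong \mathsf q (H_{\red})$, and the quotient operations $(H \DPl \cdot)$, $(H \DPr \cdot)$ commute with $\pi$ because $H^{\times}$-cosets are stable under them; hence the divisorial closure $A \mapsto A_v$ is respected. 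The main obstacles I anticipate are the careful verification of the Ore conditions for $H^{\mathsf w}$ in part 1 and the containment $\varphi (H^{\mathsf w}) \subset D^{\mathsf w}$ in part 3; both require arguments that go beyond formal manipulation inside $H$ and invoke the normalizer structure in $\mathsf q (H)$.
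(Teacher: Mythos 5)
Your overall route is the same as the paper's: show $H^{\mathsf w}$ is a submonoid with $H^{\times}\subset\mathsf N (H)\subset H^{\mathsf w}\subset H$, pass to the coset congruence, and transport ideals along the projection. The normalizer framing $H^{\mathsf w}=H\cap N$ with $N$ the normalizer of $H^{\times}$ in $\mathsf q (H)$ is a clean repackaging; your treatment of the inclusions, of part 2 (the computation $ab\in H^{\times}caH^{\times}=(H^{\times}cH^{\times})a\subset Ha$ is exactly the paper's argument with the units made implicit), and of part 4 (the paper verifies $(H^{\mathsf w}\DPl A)H^{\times}=\big(H_{\red}\DPl\{aH^{\times}\mid a\in A\}\big)$ explicitly, which is your "quotients commute with $\pi$") all match. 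However, the two steps you yourself single out as the main obstacles are not closed by the arguments you sketch. For the Ore condition on $H^{\mathsf w}$: from $ac=bd$ with $c,d\in H$ you get $cd^{-1}=a^{-1}b\in N$, hence $c^{-1}N=d^{-1}N$ is a single coset, and "adjusting the witnesses" means producing $e\in H$ with $ce,de\in N$, i.e.\ an element of $H\cap c^{-1}N$, equivalently a weakly normal element inside the principal right ideal $cH$. Nothing in your sketch (and nothing in the Ore property of $H$ or the fact that $a^{-1}b\in N$) guarantees $cH\cap H^{\mathsf w}\ne\emptyset$, so the adjustment is exactly as hard as the claim being proved; this step needs an actual argument rather than the observation that the quotient $a^{-1}b$ normalizes $H^{\times}$.

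The second gap is worse because the proposed argument is circular. For part 3 you correctly identify that one must check $\varphi (H^{\mathsf w})\subset D^{\mathsf w}$, since $\varphi (a)D^{\times}$ is an element of $D_{\red}=\{dD^{\times}\mid d\in D^{\mathsf w}\}$ only when $\varphi (a)$ is itself weakly normal (if $\varphi (a)D^{\times}=dD^{\times}$ with $d\in D^{\mathsf w}$, one checks directly that $\varphi (a)D^{\times}=D^{\times}\varphi (a)$). But pushing $aH^{\times}=H^{\times}a$ through $\varphi$ only yields $\varphi (a)\varphi (H^{\times})=\varphi (H^{\times})\varphi (a)$, and $\varphi (H^{\times})$ may be a proper subgroup of $D^{\times}$; your auxiliary principle that a right $D^{\times}$-coset meeting $D^{\mathsf w}$ lies inside $D^{\mathsf w}$ is true but has nothing to act on, because whether $\varphi (a)D^{\times}$ meets $D^{\mathsf w}$ is precisely what is in question. (For comparison, the paper's proof of part 3 only verifies that the assignment is constant on the cosets $aH^{\times}$ and says nothing about this containment, so you have located a real issue; but you have not resolved it, and it cannot be resolved by formal manipulation of the relation $aH^{\times}=H^{\times}a$ alone.) To repair part 3 you would need either an honest proof that $\varphi (a)D^{\times}=D^{\times}\varphi (a)$ for all $a\in H^{\mathsf w}$, or an additional hypothesis on $\varphi$ or on $D$ that forces it.
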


\begin{proof}
1. If $a, b \in H$ are weakly normal, then $abH^{\times} =
aH^{\times}b = H^{\times} ab$, and hence $ab$ is weakly normal. Next
we show that every normal element is weakly normal. Let $a \in H$ be
normal. If $\varepsilon \in H^{\times}$, then $a \varepsilon = ba
\in aH = Ha$ with $b \in H$ and hence $a \varepsilon a^{-1} \in H$.
Similarly, we get $a \varepsilon^{-1}a^{-1} \in H$, hence $a
\varepsilon a^{-1} \in H^{\times}$, and $a \varepsilon = (a
\varepsilon a^{-1})a \in H^{\times} a$. This shows that $aH^{\times}
\subset H^{\times}a$, and by symmetry we get $aH^{\times} =
H^{\times} a$.

By Lemma \ref{4.3}, we infer that $H^{\mathsf w}$ is a monoid with
$H^{\times} \subset \mathsf N (H) \subset H^{\mathsf w} \subset H$.
Clearly, $\simeq$ is a congruence relation on $H^{\mathsf w}$ and
$[a]_{\simeq} = aH^{\times} = H^{\times}a$ for all $a \in H^{\mathsf
w}$.

\smallskip
2. The group $\mathsf q (H^{\mathsf w})/H^{\times}$ is a quotient
group of $H_{\red}$, and hence $H_{\red}$ is a monoid.

Suppose that $H$ is normalizing. Then $\mathsf N (H) \subset
H^{\mathsf w} \subset H = \mathsf N (H)$, and we verify that
$H_{\red}$ is normalizing. Since
\[
\{ a c \mid c \in H \} = aH = Ha = \{ ca \mid c \in H \} \,,
\]
it follows that
\[
\begin{aligned}
(aH^{\times}) H_{\red} & = \{ aH^{\times} c H^{\times} \mid c \in H
\} = \{ ac H^{\times} \mid c \in H \} = \{ ca H^{\times} \mid c \in
H \} \\ & = \{ cH^{\times} a H^{\times} \mid c \in H \} =
H_{\red}(aH^{\times}) \,,
\end{aligned}
\]
and thus $H_{\red}$ is normalizing.

\smallskip
Conversely, suppose that $H = H^{\mathsf w}$ and that $H_{\red}$ is
normalizing. Let $a \in H$. By symmetry it suffices to verify that
$a H \subset Ha$. Let $c \in H$. Since
\[
acH^{\times} \in \{ (aH^{\times})(d H^{\times}) = a d H^{\times}
\mid d \in H \} = \{ (dH^{\times})(aH^{\times}) = daH^{\times} \mid
d \in H \} \,,
\]
there exist $d \in H$  and $\varepsilon \in H^{\times}$ such that
$ac = da \varepsilon$. Since $aH^{\times} = H^{\times}a$, there is
an $\eta \in H^{\times}$ such that $a \varepsilon = \eta a$, and
hence $ac = (d\eta)a \in Ha$.

\smallskip
3. If $b, c \in H^{\mathsf w}$ with $bH^{\times} = c H^{\times}$, then $\varphi (b)D^{\times} = \varphi (c) D^{\times}$. Hence we can define a map $\varphi_{\red} \colon H_{\red} \to D_{\red}$ satisfying $\varphi_{\red} (aH^{\times}) = \varphi (a) D^{\times}$. Obviously, $\varphi_{\red}$ is uniquely determined and a homomorphism.

\smallskip
4. We define a map $g \colon \mathcal
I_s(H_{\red}) \to \mathcal I_s (H^{\mathsf w})$ by setting $g (J) = \{ v \in H^{\mathsf w} \mid v H^{\times} \in J\}$ for all $J \in \mathcal
I_s(H_{\red})$. Obviously, $f$ and $g$ are inclusion preserving, inverse to each other, and hence $f$ is bijective.

If $I = aH^{\mathsf w}$, then $f (I) = \{abH^{\times} = (aH^{\times})(bH^{\times}) \mid b  \in H^{\mathsf w} \} = (aH^{\times}) H_{\red}$, and if $J = (aH^{\times}) H_{\red}$,
      then $g (J) = a H^{\mathsf w}$.

If $A \subset \mathsf q (H^{\mathsf w})$, then
\[
\begin{aligned}
(H^{\mathsf w} \DPl A) H^{\times} & = \{ uH^{\times} \mid u \in \mathsf q (H^{\mathsf w}), \, uA \subset H^{\mathsf w} \}
  = \{ uH^{\times} \mid u \in \mathsf q (H^{\mathsf w}), \, u\{a H^{\times} \mid a \in A\} \subset H_{\red} \} \\ & = \big( H_{\red} \DPl \{a H^{\times} \mid a \in A \} \big) \,.
\end{aligned}
\]
The analogous statement is true for right quotients, and thus the assertion for divisorial ideals follows.
\end{proof}

\medskip
\begin{lemma} \label{4.5}
Let $H$ be a  monoid. Then the following statements are
equivalent{\rm \,:}
\begin{enumerate}
      \smallskip
      \item[(a)] $H$ is normalizing.

      \smallskip
      \item[(b)] For all $X \subset \mathsf q (H)$, $(H \DPl X) = (H \DPr X)$.

      \smallskip
      \item[(c)] For all $X \subset \mathsf q (H)$, $HX = XH$.

      \smallskip
      \item[(d)] Every $($fractional$)$ left ideal is a $($fractional$)$ ideal.

      \smallskip
      \item[(e)] Every  divisorial $($fractional$)$ left ideal is a divisorial  $($fractional$)$  ideal.

      \smallskip
      \item[(f)] For every $a \in \mathsf q (H)$, $Ha$ is a fractional ideal.
\end{enumerate}
\end{lemma}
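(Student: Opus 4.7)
The plan is to organize the proof into two interlocking cycles. The first, (a) $\Rightarrow$ (c) $\Rightarrow$ (b) $\Rightarrow$ (a), establishes the equivalence of the ``pointwise'' characterizations of normality, and the second, (a) $\Rightarrow$ (d) $\Rightarrow$ (f) $\Rightarrow$ (a), handles the ideal-theoretic ones; condition (e) is then spliced in via (a) $\Rightarrow$ (e) $\Rightarrow$ (f).

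The key fact for the first cycle is (the evident extension of) Lemma \ref{3.2}.1: for every $x \in \mathsf q(H)$ one has $(H \DPl \{x\}) = Hx^{-1}$ and $(H \DPr \{x\}) = x^{-1}H$. For (a) $\Rightarrow$ (c), the relation $aH = Ha$ for $a \in H$ forces $Ha^{-1} = a^{-1}H$ by conjugating with $a^{-1}$, so every $x \in \mathsf q(H)$, written as $ab^{-1}$, satisfies $Hx = xH$; arbitrary subsets $X$ are handled by taking unions. For (c) $\Rightarrow$ (b), I would use Lemma \ref{3.2}.2 to write both quotients as intersections of the singleton quotients and then invoke (c) for each $\{x^{-1}\}$ to see that the two intersections coincide. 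For (b) $\Rightarrow$ (a), specializing (b) to $X = \{a\}$ with $a \in H$ yields $Ha^{-1} = a^{-1}H$ and hence $aH = Ha$.

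For the second cycle, (a) $\Rightarrow$ (d) is the main step: given a fractional left ideal $A$, both (c) and (b) (each a consequence of (a)) apply, producing $AH = HA = A$ together with $(H \DPl A) = (H \DPr A) \ne \emptyset$, which make $A$ a fractional ideal. The implication (d) $\Rightarrow$ (f) is immediate since $Ha$ is a fractional left ideal for every $a \in \mathsf q(H)$ (Lemma \ref{3.4}.4). For the returning leg (f) $\Rightarrow$ (a), if $Ha$ is a fractional ideal then $HaH = Ha$ gives $aH \subset Ha$; applying the same hypothesis to $a^{-1}$ yields $a^{-1}H \subset Ha^{-1}$, which after multiplying by $a$ on the left gives $H \subset aHa^{-1}$, i.e.\ $Ha \subset aH$, and the two inclusions combine to $aH = Ha$.

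To incorporate (e), I would prove (a) $\Rightarrow$ (e) as follows: for a divisorial fractional left ideal $A = (H \DPl (H \DPr A))$, the equality $(H \DPl Y) = (H \DPr Y)$ from (b), applied first to $Y = A$ and then to $Y = (H \DPr A)$, rewrites the formula as $A = (H \DPr (H \DPl A))$, so $A_v$ is well-defined and equals $A$; combined with (a) $\Rightarrow$ (d), $A$ is also a two-sided fractional ideal and hence a divisorial fractional ideal. For (e) $\Rightarrow$ (f), Lemma \ref{3.4}.4 already exhibits $Ha$ as a divisorial fractional left ideal, so (e) upgrades it to a divisorial fractional ideal, in particular to a fractional ideal. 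The only real obstacle is keeping the three flavours of divisoriality (left, right, two-sided) straight in the step (a) $\Rightarrow$ (e); everything else reduces to routine bookkeeping with the formulas of Lemma \ref{3.2}.
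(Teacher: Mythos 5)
Your proposal is correct and rests on exactly the same ingredients as the paper's proof: the quotient formulas of Lemma \ref{3.2}, the identity $Ha^{-1}=a^{-1}H$ extracted from normality, and a word-for-word identical argument for (f) $\Rightarrow$ (a); the only difference is organizational, in that the paper runs the single chain (a) $\Rightarrow$ (b) $\Rightarrow$ (c) $\Rightarrow$ (d) $\Rightarrow$ (e) $\Rightarrow$ (f) $\Rightarrow$ (a) (declaring the middle implications obvious) while you split the statements into two cycles and splice in (e), spelling out the divisoriality bookkeeping the paper omits. Everything you assert checks out against Lemmas \ref{3.2} and \ref{3.4}, so this is essentially the paper's proof in a rearranged order.
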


\smallskip
\noindent {\it Remark.} Of course, the  statements on right ideals,
symmetric to (d), (e) and (f), are also equivalent.

\begin{proof}
(a) \,$\Rightarrow$\, (b) \ If $X \subset \mathsf q (H)$, then
\[
(H \DPl X) = \bigcap_{a \in X} (H \DPl a) = \bigcap_{a \in X} (H
\DPl aH) = \bigcap_{a \in X} Ha^{-1} = \bigcap_{a \in X} a^{-1}H =
(H \DPr X) \,.
\]

\smallskip
(b) \,$\Rightarrow$\, (c) \ If $X \subset \mathsf q (H)$, then
\[
HX = \bigcup_{a \in X} Ha = \bigcup_{a \in X} (H \DPl a^{-1}H) =
\bigcup_{a \in X} (H \DPl a^{-1}) = \bigcup_{a \in X} (H \DPr
a^{-1})  = \bigcup_{a \in X} aH = XH \,.
\]

\smallskip
(c) \,$\Rightarrow$\, (d) \,$\Rightarrow$\, (e) \,$\Rightarrow$\,
(f) \ Obvious.

\smallskip
(f) \,$\Rightarrow$\, (a) \ Let $a \in H$. Then $Ha = HaH \supset
aH$, $Ha^{-1} = Ha^{-1}H \supset a^{-1}H$ and hence $aH \supset Ha$,
which implies that $aH = Ha$.
\end{proof}

\medskip
\begin{lemma} \label{4.6}
Let $H$ be a weakly normal monoid, $\pi \colon H \to H_{\red}$ the
canonical epimorphism, and let $\varphi \colon H \to D$ be a
homomorphism to a monoid $D$.
\begin{enumerate}
\item If $\varphi$ is a divisor homomorphism and $\psi \colon D \to D'$ is a divisor homomorphism to a  monoid $D'$,
      then $\psi \circ \varphi \colon H \to D'$ is a divisor homomorphism.

\smallskip
\item $\pi$ is a cofinal divisor homomorphism, and $\varphi$ is a divisor homomorphism if and only if $\varphi_{\red} \colon H_{\red} \to D_{\red}$ is a divisor
      homomorphism. If $\varphi$ is a divisor homomorphism, then
      $\varphi_{\red}$ is injective, $H_{\red} \cong \varphi_{\red}
      (H_{\red})$ and $\varphi_{\red} (H_{\red}) \subset D_{\red}$ is a
      saturated submonoid.

\smallskip
\item If $D = \mathcal F (P)$, then $\varphi$ is a divisor theory if and only if $\varphi_{\red} \colon H_{\red} \to D$ is a divisor theory.
\end{enumerate}
\end{lemma}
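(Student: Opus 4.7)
The plan is to treat the three parts separately, with part~2 doing the substantive work and parts~1 and~3 reducing to bookkeeping once part~2 is in hand. For part~1, I would simply unpack the definitions: if $\psi(\varphi(u)) \tl \psi(\varphi(v))$, then the divisor homomorphism property of $\psi$ gives $\varphi(u) \tl \varphi(v)$ in $D$, and the same property for $\varphi$ then gives $u \tl v$ in $H$; the right-divisibility case is identical. There is no subtlety here.

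For part~2 I would proceed in three stages. First, I would check that $\pi \colon H \to H_{\red}$ is a cofinal divisor homomorphism: cofinality is immediate since $\pi$ is surjective (take $u=v=a$), and for the divisor homomorphism property I unpack $uH^{\times} \tl vH^{\times}$ to $v = uw\varepsilon$ with $w \in H$ and $\varepsilon \in H^{\times}$, so $u \tl v$; in the right-divisibility case I get $v = wu\varepsilon$, and here I would use that $H = H^{\mathsf w}$ to rewrite $u\varepsilon = \eta u$ with $\eta \in H^{\times}$, yielding $v = (w\eta)u$. This is the one spot where weak normality is used nontrivially. Second, I would establish the equivalence of $\varphi$ and $\varphi_{\red}$ being divisor homomorphisms via the commutative diagram $\varphi_{\red}\circ\pi_H = \pi_D\circ\varphi$: assuming $\varphi_{\red}$, apply part~1 to $\varphi_{\red}\circ\pi_H$ and combine with the fact that $\pi_D$ preserves divisibility as a monoid homomorphism; assuming $\varphi$, unpack $\varphi_{\red}(aH^{\times}) \tl \varphi_{\red}(bH^{\times})$ to $\varphi(a)\tl\varphi(b)$ in $D$, apply the divisor homomorphism property of $\varphi$, and repackage into $aH^{\times} \tl bH^{\times}$. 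Third, injectivity of $\varphi_{\red}$ follows from mutual divisibility: $\varphi_{\red}(aH^{\times}) = \varphi_{\red}(bH^{\times})$ forces $a \tl b$ and $b \tl a$ in $H$, so $aH = bH$ and hence $aH^{\times} = bH^{\times}$ by the elementary observation recorded in Section~\ref{2}. The isomorphism $H_{\red} \cong \varphi_{\red}(H_{\red})$ is then tautological, and saturation of $\varphi_{\red}(H_{\red})$ in $D_{\red}$ is automatic: any divisibility relation between two elements of the image descends via the divisor homomorphism property of $\varphi_{\red}$ to a divisibility in $H_{\red}$, which pushes forward to a divisibility inside $\varphi_{\red}(H_{\red})$.

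For part~3, I would exploit that $\mathcal F(P)$ is reduced, so $D_{\red} = \mathcal F(P)$ and $\varphi_{\red}(xH^{\times}) = \varphi(x)$ for all $x \in H$. The divisor homomorphism component of the divisor theory condition is handled by part~2. For the gcd condition, a finite subset $X \subset H$ realizing $p = \gcd(\varphi(X))$ yields $X' = \pi(X) \subset H_{\red}$ with $\varphi_{\red}(X') = \varphi(X)$, so $p = \gcd(\varphi_{\red}(X'))$, and conversely any finite $X' \subset H_{\red}$ lifts to a finite $X \subset H$ with the same property. I expect the main obstacle to lie in part~2, specifically in keeping left and right divisibility symmetric throughout and locating the use of weak normality exactly where needed to conclude that $\pi$ is a divisor homomorphism on the right; everything else is formal manipulation of the commutative diagram.
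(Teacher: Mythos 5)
Your proposal is correct and follows essentially the same route as the paper: part 1 by unwinding definitions, part 2 via injectivity from mutual divisibility plus the observation that $aH=bH$ iff $aH^{\times}=bH^{\times}$, and part 3 by matching $\gcd(\varphi(X))$ with $\gcd(\varphi_{\red}(\pi(X)))$. The only difference is that you spell out the claims the paper dismisses as "clear" (that $\pi$ is a cofinal divisor homomorphism, using weak normality for right divisibility, and the equivalence for $\varphi_{\red}$), and these details are accurate.
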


\begin{proof}
1. Suppose that $\varphi$ and $\psi$ are divisor homomorphisms, and
let $a, b \in H$ such that $\psi \bigl ( \varphi (a) \bigr) \tl \psi
\bigl ( \varphi (b) \bigr)$. Since $\psi$ is a divisor homomorphism,
we infer that $\varphi (a) \tl \varphi (b)$, and since $\varphi $ is
a divisor homomorphism, we obtain that $a \tl b$. The analogous
argument works for right divisibility.

\smallskip
2. The first statements are clear. Now suppose that $\varphi$ is a
divisor homomorphism, and let $a, b \in H$ with $\varphi (a) =
\varphi (b)$. Then $\varphi (a) \t \varphi (b)$, $\varphi (b) \t
\varphi (a)$, hence $a \t b$, $b \t a$, and thus $a H^{\times} = b
H^{\times}$. Thus $\varphi_{\red}$ is injective, $H_{\red} \cong
\varphi_{\red}
      (H_{\red})$, and since $\varphi_{\red}$ is a divisor
      homomorphism, $\varphi_{\red}(H_{\red}) \subset D_{\red}$ is
      saturated.

\smallskip
3. By 2., it remains to verify that $\varphi$ satisfies the
condition involving the greatest common divisor if and only if
$\varphi_{\red}$ does. Indeed, if $a_1, \ldots, a_n \in H$, then
$\varphi_{\red} (a_i H^{\times}) = \varphi (a_i)$ for all $i \in [1,
n]$ and hence
\[
\gcd \big( \varphi (a_1), \ldots, \varphi (a_n) \big) = \gcd \big(
\varphi_{\red} (a_1 H^{\times}), \ldots \varphi_{\red}
(a_nH^{\times}) \big) \,,
\]
which implies the assertion.
\end{proof}

\medskip
\begin{lemma} \label{4.7}
Let $H$ be a  monoid.
\begin{enumerate}
\item If $a, b \in \mathsf N (H)$, then $aH$, $bH$ are divisorial ideals of $H$, and
$(aH) \cdv (bH) = (aH)(bH) = abH$. Thus the usual ideal
multiplication coincides with the $v$-multiplication.

\smallskip
\item Equipped with usual ideal multiplication,  $\mathcal P^{\mathsf n} (H)$ is a normalizing
monoid. It is a saturated submonoid of $\mathcal I_v^* (H)$, and the
inclusion is cofinal if and only if $\mathfrak a \cap \mathsf N (H)
\ne \emptyset$ for all $\mathfrak a \in \mathcal I_v^* (H)$.

\smallskip
\item The map $f \colon \mathsf N (H)_{\red} \to \mathcal P^{\mathsf n} (H)$, defined by
      $ a H^{\times} = a \mathsf N (H)^{\times} \mapsto aH$ for all $a \in \mathsf N (H)$,
      is an isomorphism.

\smallskip
\item If $H$ is normalizing, then the map \ $\partial \colon H \to \mathcal I_v^* (H)$, defined by
      $\partial (a) = aH$ for all $a \in H$, is a cofinal divisor
      homomorphism.
\end{enumerate}
\end{lemma}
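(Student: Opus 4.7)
The plan is to handle the four assertions in order, with each part building on its predecessors.

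For (1), fix $a \in \mathsf N(H)$, so that $aH = Ha$. Lemma 3.4.4 gives $(H \DPr aH) = a^{-1}H$ and $(H \DPl aH) = Ha^{-1}$; applying it once more yields both $\big(H \DPl (H \DPr aH)\big) = Ha$ and $\big(H \DPr (H \DPl aH)\big) = aH$, which coincide by normality. Thus $aH$ is a divisorial ideal. For the product formula, the direct computation $(aH)(bH) = a(Hb)H = a(bH)H = abH$, combined with $ab \in \mathsf N(H)$ (Lemma 4.3.2) and the divisoriality just established, gives $(aH)(bH) = abH = (abH)_v = (aH) \cdv (bH)$.

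For (2), closure under ideal multiplication and the identity $H = 1 \cdot H$ make $\mathcal P^{\mathsf n}(H)$ a submonoid of $\mathcal I_v^*(H)$ by (1). To verify it is normalizing, invoke Lemma 4.3.2: $\mathsf N(H)$ itself is normalizing, so for $a,b \in \mathsf N(H)$ there exists $c \in \mathsf N(H)$ with $ab = ca$, giving $(aH)(bH) = (cH)(aH)$. For saturation, if $aH$ left-divides $bH$ in $\mathcal I_v^*(H)$, then Proposition 3.12.4 yields $bH \subset aH$, whence $b = ac$ for some $c \in H$; saturation of $\mathsf N(H) \subset H$ (Lemma 4.3.2) forces $c \in \mathsf N(H)$, so $bH = (aH)(cH)$ witnesses left divisibility in $\mathcal P^{\mathsf n}(H)$, and the right-divisibility variant follows by combining this with the normalizing property just proved. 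Cofinality is a direct translation: since $\mathcal I_v^*(H)$ is commutative (Proposition 3.12.3), an ideal $\mathfrak a$ is divided by some $bH$ if and only if $bH \subset \mathfrak a$, i.e., $b \in \mathfrak a \cap \mathsf N(H)$.

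For (3), the preliminary observation is that $H^{\times} \subset \mathsf N(H)$ (every unit $\varepsilon$ satisfies $\varepsilon H = H = H \varepsilon$) and these units are invertible in $\mathsf N(H)$, hence $\mathsf N(H)^{\times} = H^{\times}$ and $\mathsf N(H)_{\red} = \mathsf N(H)/H^{\times}$. Well-definedness, injectivity and surjectivity of $f$ are then immediate from the equivalence $aH = bH \Leftrightarrow a H^{\times} = b H^{\times}$, and the homomorphism property is exactly the product formula of (1). For (4), if $H$ is normalizing then $H = \mathsf N(H)$, so $\partial$ takes values in $\mathcal I_v^*(H)$ and is a monoid homomorphism by (1). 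Cofinality is automatic since any non-empty $\mathfrak a \in \mathcal I_v^*(H)$ already contains elements of $H$. The divisor-homomorphism property is once more Proposition 3.12.4: $\partial(a) \tl \partial(b)$ forces $bH \subset aH$, equivalently $a \tl b$ in $H$, and the right-sided statement is analogous.

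The main obstacle I anticipate is the saturation step in (2), where one must carefully track that the cofactor $c$ with $b = ac$ lies in $\mathsf N(H)$ and then extract a right-sided factorization from the left-sided one; all remaining steps are routine applications of Lemmas 3.4, 4.3 and Proposition 3.12.
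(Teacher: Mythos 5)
Your proposal follows the paper's own proof quite closely in structure: part (1) via Lemma \ref{3.4}.4 and the computation $(aH)(bH)=abH$; part (2) via the saturation of $\mathsf N(H)\subset H$ and the correspondence between divisibility and containment; part (3) via $aH=bH\Leftrightarrow aH^{\times}=bH^{\times}$ together with (1); and part (4) essentially as a corollary of (2) and (3). The one point where you should be careful is the repeated appeal to Proposition \ref{3.12} (parts 3 and 4): that proposition is stated and proved under the hypothesis that $H$ is completely integrally closed, whereas Lemma \ref{4.7} assumes only that $H$ is a monoid. In particular, the commutativity of $\mathcal I_v^*(H)$ that you invoke for cofinality is obtained in the paper only through the po-group theorem, which genuinely needs complete integral closure. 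The paper sidesteps this by arguing directly with $v$-invertibility and treating left and right divisibility separately: for saturation it computes $a^{-1}b\in a^{-1}bH=(a^{-1}H)(bH)=(a^{-1}H)\cdv(aH)\cdv\mathfrak a=\mathfrak a\subset H$, and for cofinality it uses $\mathfrak a\cdv(\mathfrak a^{-1}\cdv aH)=aH=(aH\cdv\mathfrak a^{-1})\cdv\mathfrak a$, both of which rest only on $\mathfrak a\cdv\mathfrak a^{-1}=\mathfrak a^{-1}\cdv\mathfrak a=H$ for $\mathfrak a\in\mathcal I_v^*(H)$ (true by definition of $v$-invertibility) and on $(a^{-1}H)\cdv(aH)=H$ for normal $a$. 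The specific instances of Proposition \ref{3.12}.4 you need do hold in this generality and are verified by exactly these one-line computations, so your argument is repairable; but as written the citations outrun the lemma's hypotheses, and the easy direction you actually use ($aH\tl bH$ in $\mathcal I_v^*(H)$ implies $bH\subset aH$) should be justified by the monotonicity of the $v$-operation rather than by Proposition \ref{3.12}.
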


\begin{proof}
1. If $c \in \mathsf N (H)$, then $cH$ is an ideal of $H$ by
definition, and it is divisorial by Lemma \ref{3.4}.4. If $a, b \in
\mathsf N (H)$, then
\[
(aH) \cdv (bH) = \big( (aH)(bH) \big)_v = (abH)_v = abH \,.
\]

\smallskip
2. and 3. Let $a, b \in H$. Since $aH = bH$ if and only if
$aH^{\times} = bH^{\times}$, $f$ is injective, and obviously $f$ is
a semigroup epimorphism. Since $\mathsf N (H)$ is normalizing by
Lemma \ref{4.3}, its associated reduced monoid $\mathsf N
(H)_{\red}$ is normalizing, and thus $\mathcal P^{\mathsf n} (H)$ is
a normalizing monoid. By 1., it is a submonoid of $\mathcal I_v^*
(H)$.

In order to show that $\mathcal P^{\mathsf n} (H) \subset \mathcal
I_v^* (H)$ is saturated, let $a, b \in \mathsf N (H)$ such that $aH
\tl bH$ in $\mathcal I_v^* (H)$. Then there exists some $\mathfrak a
\in \mathcal I_v^* (H)$ such that $bH = aH \cdv \mathfrak a$, and hence $a^{-1}b \in a^{-1}bH = (a^{-1}H)bH = (a^{-1}H) \cdv (aH) \cdv \mathfrak a = \mathfrak a \subset H$. The argument for divisibility on the right side is similar.

If $\mathfrak a \in \mathcal I_v^* (H)$ and $a \in \mathfrak a \cap
\mathsf N (H)$, then $\mathfrak a \cdv \mathfrak a^{-1} = \mathfrak a^{-1} \cdv \mathfrak a = H$, $aH \subset \mathfrak a$, and hence $\mathfrak a
\cdv ( \mathfrak a^{-1} \cdv aH) = aH = ( aH \cdv \mathfrak a^{-1} ) \cdv \mathfrak a$. This shows that, if $\mathfrak
a \cap
      \mathsf N (H) \ne \emptyset$ for all $\mathfrak a \in \mathcal I_v^*
      (H)$, then $\mathcal P^{\mathsf n} (H) \subset \mathcal I_v^*
      (H)$ is cofinal. An analogous argument shows the converse.

\smallskip
4. If $H$ is normalizing, then $H = \mathsf N (H)$ is weakly normal.
Using 2., 3., and  Lemma \ref{4.6} we infer that
\[
\partial: H \overset{\pi}{\longrightarrow} H_{\red} \cong \mathcal P^{\mathsf n} (H) = \mathcal P
(H) \hookrightarrow \mathcal I_v^* (H)
\]
is a  cofinal divisor homomorphism, because it is a composition of
such homomorphisms.
\end{proof}

\smallskip
The following characterization of a divisor homomorphism will be
used without further mention.

\medskip
\begin{lemma} \label{4.8}
Let $\varphi \colon H \to D$ be a monoid homomorphism, and set $\phi
= \mathsf q (\varphi) \colon \mathsf q (H) \to \mathsf q (D)$. Then
the following statements are equivalent{\rm \,:}
\begin{enumerate}
\item[(a)] $\varphi$ is a divisor homomorphism.

\item[(b)] $\phi^{-1} (D) = H$.
\end{enumerate}
In particular, if $\varphi = (H \hookrightarrow D)$, then $H \subset
D$ is saturated if and only if $H = \mathsf q (H) \cap D$.
\end{lemma}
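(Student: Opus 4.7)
The plan is to prove the equivalence (a) $\Leftrightarrow$ (b) by translating divisibility in $H$ or $D$ directly into the membership condition $\phi^{-1}(D) = H$, using the fact that $\mathsf q(H)$ is simultaneously a left and a right quotient group of $H$. The ``in particular'' statement will then follow by observing that $\phi$ is injective when $\varphi$ is.

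For (a) $\Rightarrow$ (b), the inclusion $H \subset \phi^{-1}(D)$ is clear. Conversely, let $x \in \phi^{-1}(D)$ and, using that $\mathsf q(H)$ is a left quotient group of $H$, write $x = a^{-1}b$ with $a, b \in H$. Then $\varphi(a)\phi(x) = \varphi(b) \in D$, so $\varphi(a) \tl \varphi(b)$ in $D$. The divisor homomorphism property yields $a \tl b$ in $H$, i.e.\ $b = ac$ for some $c \in H$, whence $x = a^{-1}b = c \in H$. This gives $\phi^{-1}(D) \subset H$.

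For (b) $\Rightarrow$ (a), suppose $u, v \in H$ with $\varphi(u) \tl \varphi(v)$, so $\varphi(v) = \varphi(u) d$ for some $d \in D$. Then $d = \varphi(u)^{-1}\varphi(v) = \phi(u^{-1}v)$, so $u^{-1}v \in \phi^{-1}(D) = H$. Setting $c = u^{-1}v \in H$ gives $v = uc$, i.e.\ $u \tl v$ in $H$. The analogous argument for right divisibility uses that $\mathsf q(H)$ is also a right quotient group: if $\varphi(u) \tr \varphi(v)$, write the relevant element as $vu^{-1}$ and conclude via the same $\phi^{-1}(D) = H$ condition. Thus $\varphi$ is a divisor homomorphism.

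For the ``in particular'' statement, assume $\varphi$ is the inclusion $H \hookrightarrow D$. I claim $\phi$ is then injective: if $\phi(a^{-1}b) = 1$ with $a, b \in H$, then $\varphi(a) = \varphi(b)$, hence $a = b$, hence $a^{-1}b = 1$. Identifying $\mathsf q(H)$ with its image in $\mathsf q(D)$ under $\phi$, the set $\phi^{-1}(D)$ becomes $\mathsf q(H) \cap D$, and the equivalence (a) $\Leftrightarrow$ (b) translates into: $H \subset D$ is saturated if and only if $H = \mathsf q(H) \cap D$. There is no real obstacle here; the only subtle point to keep track of is the left/right symmetry, which is handled by exploiting both Ore conditions on $H$ to write quotient-group elements in either the form $a^{-1}b$ or $ba^{-1}$ as needed.
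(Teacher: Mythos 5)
Your proof is correct and follows essentially the same route as the paper: both directions are handled by writing an element of $\mathsf q(H)$ as $a^{-1}b$ (or $ba^{-1}$ for the right-divisibility case) and translating $\phi(a^{-1}b)\in D$ into $\varphi(a)\tl\varphi(b)$, and the ``in particular'' statement is the observation that $\phi^{-1}(D)=\mathsf q(H)\cap D$ for the inclusion map. The extra remark on injectivity of $\phi$ is harmless but not needed.
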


\begin{proof}
(a) \,$\Rightarrow$\, (b) \ Clearly, we have $H \subset \phi^{-1}
(D)$. If $x = a^{-1}b \in \phi^{-1} (D)$ with $a, b \in H$, then
$\phi (x) = \varphi (a)^{-1} \varphi (b) \in D$ and therefore
$\varphi (a) \tl \varphi (b)$. Hence $a \tl b$ and $x \in H$.

\smallskip
(b) \,$\Rightarrow$\, (a) \ Let $a, b \in H$ such that $\varphi (a)
\tl \varphi (b)$. Then $\phi (a^{-1}b) = \varphi (a)^{-1} \varphi
(b) \in D$, hence $a^{-1}b \in H$ and $a \tl b$. Similarly, $\varphi
(a) \tr \varphi (b)$ implies that $a \tr b$.

\smallskip
If $\varphi = (H \hookrightarrow D)$, then $\phi^{-1} (D) = \mathsf
q (H) \cap D$, and the assertion follows.
\end{proof}

\medskip
\begin{lemma} \label{4.9}
Let $D$ be a monoid and $H \subset D$ a saturated submonoid.
\begin{enumerate}

\item If $\mathfrak a \subset H$ is a left ideal of $H$, then $D
      \mathfrak a \subset D$ is a left ideal of $D$, and $D \mathfrak a
      \cap H = \mathfrak a$ $($similarly, if $\mathfrak a \subset H$ is a right ideal of
      $H$, then $\mathfrak a D \cap H = \mathfrak a$$)$.

\smallskip
\item Let $\mathfrak a \subset H$ be an ideal. If $\mathfrak a$ is a divisorial left ideal,
      then $\big(D \DPl (H \DPr \mathfrak a)\big)$ is a divisorial left
      ideal of $D$ with $\mathfrak a = \big(D \DPl (H \DPr \mathfrak a)\big) \cap H$. If $\mathfrak a$ is a divisorial right ideal,
      then $\big(D \DPr (H \DPl \mathfrak a)\big)$ is a
      divisorial right ideal of $D$ with $\mathfrak a = \big(D \DPr (H \DPl \mathfrak a)\big) \cap H$.

\smallskip
\item If $D$ satisfies the ascending chain condition on divisorial
      left ideals, then $H$ is $v$-noetherian.
\end{enumerate}
\end{lemma}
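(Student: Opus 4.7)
Part (1) is straightforward: $D\mathfrak a$ is tautologically a left $D$-module contained in $D$, and the inclusion $\mathfrak a \subset D\mathfrak a \cap H$ is trivial. For the reverse inclusion, I would write $x = da$ with $d \in D$, $a \in \mathfrak a$, and invoke the saturation criterion of Lemma \ref{4.8} to get $d = xa^{-1} \in \mathsf q(H) \cap D = H$; then $x \in H\mathfrak a \subset \mathfrak a$. The right-sided case is symmetric.

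The real work is in (2). Setting $\mathfrak b := \big(D \DPl (H \DPr \mathfrak a)\big)$, I would first note that $1 \in (H \DPr \mathfrak a)$ (because $\mathfrak a \subset H$), which forces $\mathfrak b \subset D$; and that $\mathfrak b$ is a left $D$-module by Lemma \ref{3.2}.5(a). The decisive move is to recognise $\mathfrak b$ as the left quotient $(D \DPl A)$ for $A := (H \DPr \mathfrak a) \subset \mathsf q(H) \subset \mathsf q(D)$, whereupon Lemma \ref{3.4}.6 applied inside $D$ immediately yields $\mathfrak b = \big(D \DPl (D \DPr \mathfrak b)\big)$; this is the trick that sidesteps a bare-handed verification of divisoriality, and it is the main obstacle in the proof. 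For the identity $\mathfrak a = \mathfrak b \cap H$, the inclusion $\subset$ is clear since $\mathfrak a \cdot (H \DPr \mathfrak a) \subset H \subset D$. For $\supset$, given $x \in \mathfrak b \cap H$ the product $x (H \DPr \mathfrak a)$ lies in $D \cap \mathsf q(H) = H$ by Lemma \ref{4.8}, so divisoriality of $\mathfrak a$ as a left ideal of $H$ gives $x \in \big(H \DPl (H \DPr \mathfrak a)\big) = \mathfrak a$. The divisorial right-ideal statement is handled in mirror fashion using the other half of Lemma \ref{3.4}.6.

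For (3), I would perform a pushup-pullback using (2): given an ascending chain $\mathfrak a_1 \subset \mathfrak a_2 \subset \cdots$ of $v$-ideals of $H$, each $\mathfrak a_i$ is in particular a divisorial left ideal, so (2) produces divisorial left ideals $\mathfrak b_i := \big(D \DPl (H \DPr \mathfrak a_i)\big)$ of $D$ with $\mathfrak b_i \cap H = \mathfrak a_i$. The chain $(\mathfrak b_i)$ ascends because the composition of the two inclusion-reversing maps $\mathfrak a \mapsto (H \DPr \mathfrak a)$ and $A \mapsto (D \DPl A)$ is inclusion-preserving; the ACC hypothesis on $D$ then terminates it, and intersecting with $H$ transports termination back to $(\mathfrak a_i)$.
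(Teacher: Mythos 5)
Your proposal is correct and follows essentially the same route as the paper: part (1) via $d = xa^{-1} \in \mathsf q(H) \cap D = H$, part (2) by recognising $\big(D \DPl (H \DPr \mathfrak a)\big)$ as a left quotient over $D$ and invoking Lemma \ref{3.4}.6, and part (3) by the same pushup--pullback of chains. Your observation that $1 \in (H \DPr \mathfrak a)$ forces $\mathfrak b \subset D$ is a slight streamlining of the paper's argument via $D = HD \subset (H \DPr \mathfrak a)D$, but the proofs are otherwise identical.
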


\smallskip
\noindent {\it Remark.} All quotients are formed in their respective
quotient groups. So $(H \DPr \mathfrak a) = \{ q \in \mathsf q (H)
\mid \mathfrak a q \subset H\}$, $\big(D \DPl (H \DPr \mathfrak
a)\big) = \{ q \in \mathsf q (D) \mid q (H \DPr \mathfrak a) \subset
D \}$, and so on.

\begin{proof}

\smallskip
1. Clearly, $D \mathfrak a \subset D$ is a left ideal of $D$, and we
have $\mathfrak a \subset D \mathfrak a \cap H$. If $x = uz \in H$
where $u \in D$ and $z \in \mathfrak a \subset H$, then $u \in
\mathsf q (H) \cap D = H$ and hence $x \in H \mathfrak a = \mathfrak
a$.

\smallskip
2. Let $\mathfrak a \subset H$ be a divisorial left ideal. Then $H
\subset (H \DPr \mathfrak a)$ and $D = HD \subset (H \DPr \mathfrak
a)D$ which implies that $\big(D \DPl (H \DPr \mathfrak a)\big) = (D
\DPl (H \DPr \mathfrak a)D ) \subset D$. By Lemma \ref{3.4}.6,
$\big(D \DPl (H \DPr \mathfrak a)\big)$ is a divisorial left ideal
of $D$.

If $a \in \mathfrak a$, then $a(H \DPr \mathfrak a) \subset H
\subset D$ and hence $a \in \big(D \DPl (H \DPr \mathfrak a)\big)$.
If $a \in \big(D \DPl (H \DPr \mathfrak a) \big) \cap H$, then $a (H
\DPr \mathfrak a) \subset D \cap \mathsf q (H) = H$ and hence $a \in
\big(H \DPl (H \DPr \mathfrak a)\big) = \mathfrak a$. Thus we have
$\mathfrak a = \big(D \DPl (H \DPr \mathfrak a)\big) \cap H$.

\smallskip
3. Let $(\mathfrak a_n)_{n \ge 0}$ be an ascending chain of
divisorial ideals of $H$, and set $\mathfrak A_n = \big(D \DPl (H
\DPr \mathfrak a_n)\big)$ for all $n \ge 0$. Then $(\mathfrak
A_n)_{n \ge 0}$ is an ascending chain of divisorial left ideals of
$D$. If it becomes stationary, then the initial chain $(\mathfrak
a_n)_{n \ge 0}$ becomes stationary because $\mathfrak a_n =
\mathfrak A_n \cap H$ for all $n \ge 0$.
\end{proof}

\medskip
\begin{lemma} \label{4.10}
Let  $\varphi \colon H \to D$ be a monoid homomorphism with $\varphi
(H) \subset \mathsf N (D)$, and set $\phi = \mathsf q (\varphi)
\colon \mathsf q (H) \to \mathsf q (D)$.
\begin{enumerate}
\item  If $H'$ is an overmonoid of $H$ with $aH'b \subset H$ for
       some $a, b \in H$, then $D' = D \phi (H')$ is an overmonoid of $D$
       with $\varphi (a)D' \varphi (b) \subset D$.

\smallskip
\item Suppose that $\varphi$ is a divisor homomorphism.
      \begin{enumerate}
      \smallskip
      \item If $D$ is completely integrally closed, then $H$ is
            completely integrally closed.

      \smallskip
      \item $H$ is normalizing.
      \end{enumerate}
\end{enumerate}
\end{lemma}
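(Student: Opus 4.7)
The plan is to establish Part 1 by a direct manipulation exploiting the normalization hypothesis $\varphi(H) \subset \mathsf N(D)$, then to obtain Part 2(a) by transporting the complete-integral-closure criterion (Lemma \ref{3.10}.1(a)) from $D$ down to $H$ via Part 1, and finally to verify Part 2(b) by a conjugation argument relying on Lemma \ref{4.8}.

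For Part 1, the first step is to observe that $\phi(H')$ normalizes $D$ inside $\mathsf q(D)$. Indeed, for any $x = h_1^{-1}h_2 \in H' \subset \mathsf q(H)$ with $h_1, h_2 \in H$, the hypothesis $\varphi(h_1), \varphi(h_2) \in \mathsf N(D)$ yields $\varphi(h_2)D = D\varphi(h_2)$, and conjugating the identity $D = \varphi(h_1)^{-1}D\varphi(h_1)$ gives $\varphi(h_1)^{-1}D = D\varphi(h_1)^{-1}$; combining the two produces $\phi(x)D = D\phi(x)$. From this I deduce $D'D' = DD\phi(H'H') = D\phi(H') = D'$, so that $D' = D\phi(H')$ is a submonoid of $\mathsf q(D)$ containing $D$, i.e.\ an overmonoid of $D$. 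For the containment $\varphi(a)D'\varphi(b) \subset D$, I would take a typical element $\varphi(a)d\phi(h')\varphi(b)$ and rewrite it, using $\varphi(a) \in \mathsf N(D)$, as $d'\varphi(a)\phi(h')\varphi(b) = d'\varphi(ah'b)$ for some $d' \in D$; since $ah'b \in H$ by hypothesis, this lies in $D$.

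For Part 2(a), I apply criterion (a) of Lemma \ref{3.10}.1. Given any overmonoid $H'$ of $H$ with $aH'b \subset H$ for some $a, b \in H$, Part 1 produces an overmonoid $D' = D\phi(H')$ of $D$ with $\varphi(a)D'\varphi(b) \subset D$. Since $D$ is completely integrally closed, $D' = D$, so $\phi(H') \subset D$. Because $\varphi$ is a divisor homomorphism, Lemma \ref{4.8} gives $\phi^{-1}(D) = H$, forcing $H' \subset H$ and hence $H' = H$. For Part 2(b), given $a, h \in H$, I set $h' = aha^{-1} \in \mathsf q(H)$ and compute $\phi(h') = \varphi(a)\varphi(h)\varphi(a)^{-1}$; since $\varphi(a) \in \mathsf N(D)$, I may write $\varphi(a)\varphi(h) = d\varphi(a)$ for some $d \in D$, whence $\phi(h') = d \in D$, and Lemma \ref{4.8} forces $h' \in H$. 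Then $ah = h'a \in Ha$, and the symmetric argument applied to $a^{-1}ha$ gives $Ha \subset aH$, so $H$ is normalizing.

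The main technical point, which I expect to be the only non-formal step, is the normalization claim in Part 1: translating the pointwise hypothesis $\varphi(H) \subset \mathsf N(D)$ into the global identity $D\phi(H') = \phi(H')D$ requires passing to inverses in $\mathsf q(D)$, and it is precisely this that makes $D'$ into a monoid. Everything else reduces to short applications of Lemma \ref{3.10}.1 and the characterization of divisor homomorphisms in Lemma \ref{4.8}.
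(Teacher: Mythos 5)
Your proposal is correct and follows essentially the same route as the paper: Part 1 via the identity $D\phi(H')=\phi(H')D$ (which you verify in more detail than the paper, which simply asserts it), Part 2(a) via Lemma \ref{3.10}.1(a) together with $\phi^{-1}(D)=H$, and Part 2(b) by the same normality-of-$\varphi(a)$ computation, merely phrased through $\phi^{-1}(D)=H$ rather than through the divisibility form $\varphi(a)\tr\varphi(b)\Rightarrow a\tr b$ — an equivalence already recorded in Lemma \ref{4.8}.
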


\begin{proof}
1. Since $\varphi (H) \subset \mathsf N (D)$, we have $D \phi (H') =
\phi (H')D$, and hence $D'$ is an overmonoid of $D$. Furthermore, we
get
\[
\varphi (a) D' \varphi (b) = \varphi (a) D \phi (H') \varphi (b) = D
\varphi (a) \phi (H') \varphi (b) = D \phi (aH'b) \subset D \,.
\]

\smallskip
2.(a) If $D$ is completely integrally closed and $H'$ is an
overmonoid of $H$ as in 1., then $H' \subset \phi^{-1} (D') =
\phi^{-1} (D) = H$. Thus $H$ is completely integrally closed by
Lemma \ref{3.10}.

\smallskip
2.(b) Let $a \in H$. We show that $aH \subset Ha$, and then by
symmetry we get $aH = Ha$. If $b \in aH$, then $\varphi (b) \in
\varphi (a)D = D \varphi (a)$, which implies that $\varphi (a) \tr
\varphi (b)$, $a \tr b$ and hence $b \in Ha$.
\end{proof}

\medskip
\begin{lemma} \label{4.11}
Let $\varphi \colon H \to D$ be a divisor homomorphism into a
normalizing monoid $D$, and set $\phi = \mathsf q (\varphi) \colon
\mathsf q (H) \to \mathsf q (D)$.
\begin{enumerate}
\item For every $X \subset H$ we have $X^{-1} = \phi^{-1} \big( \phi
(X)^{-1} \big)$.

\smallskip
\item For every $\mathfrak a \in \mathcal F_v (H)$ we have
$\mathfrak a = \phi^{-1} \big( \phi (\mathfrak a)_v \big)$.

\smallskip
\item If $D = \mathcal F (P)$, $\emptyset \ne \mathfrak a \in
\mathcal I_v (H)$ and $a = \gcd \big( \varphi (\mathfrak a) \big)$,
then $\mathfrak a = \varphi^{-1} ( aD)$.

\smallskip
\item Let $\varphi$ be a divisor theory.
      \begin{enumerate}
      \smallskip
      \item For every $a \in \mathsf q (D)$ there is a finite non-empty set $X \subset \mathsf q (H)$ such that $aD = \phi (X)_v$.

      \smallskip
      \item For every $\emptyset \ne X \subset
      H$, we have  $\gcd \big( \varphi (X) \big) = \gcd \big( \varphi (X_v) \big)$.
      \end{enumerate}
\end{enumerate}
\end{lemma}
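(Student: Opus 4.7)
The four parts lend themselves to a natural sequence, with (1)--(3) following quickly from Lemma~\ref{4.8} and the structural Lemmas~\ref{3.2} and~\ref{3.4}, whereas (4) is where the divisor theory hypothesis is essential. Throughout, $H$ is normalizing by Lemma~\ref{4.10}.2.(b), and in the setting of (4) the monoid $D = \mathcal{F}(P)$ is commutative, so all $v$-operations are unambiguous and the familiar gcd arithmetic of $\mathcal{F}(P)$ is available.

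For (1), I unwind the definitions: $y \in X^{-1} = (H \DP X)$ iff $yX \subset H$, and by Lemma~\ref{4.8} (which gives $\phi^{-1}(D) = H$) this is equivalent to $\phi(y)\phi(X) \subset D$, i.e.\ $\phi(y) \in \phi(X)^{-1}$. For (2), I combine (1) with the representation $\mathfrak{a} = \bigcap\{Hc : c \in \mathsf{q}(H),\ \mathfrak{a} \subset Hc\}$ available from Lemmas~\ref{3.2} and~\ref{3.4}: the inclusion $\mathfrak{a} \subset \phi^{-1}(\phi(\mathfrak{a})_v)$ is automatic, and conversely, if $y \in \phi^{-1}(\phi(\mathfrak{a})_v)$ and $c \in \mathsf{q}(H)$ satisfies $\mathfrak{a} \subset Hc$, then $\phi(\mathfrak{a}) \subset D\phi(c)$ forces $\phi(y) \in D\phi(c)$ and hence $yc^{-1} \in \phi^{-1}(D) = H$, whence $y \in Hc$. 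For (3), in the free abelian monoid $\mathcal{F}(P)$ the $v$-closure of any nonempty subset $S \subset D$ equals $\gcd(S) \cdot D$, so $\phi(\mathfrak{a})_v = aD$, and (2) yields $\mathfrak{a} = \phi^{-1}(aD) = \varphi^{-1}(aD)$ since $aD \subset D$.

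For (4)(a), I fix for each $p \in P$ a finite $X_p \subset H$ with $\phi(X_p)_v = pD$. Given $a = \prod_{p} p^{n_p} \in \mathsf{q}(D)$, form $h = \prod_{p:\, n_p < 0} x_p^{-n_p} \in H$ by choosing any $x_p \in X_p$; since $p \mid \varphi(x_p)$ we obtain $\varphi(h) \in a^{-1}D$, so $b := a\varphi(h) \in D$. By the multiplicativity of $\gcd$ over product sets in $\mathcal{F}(P)$, the finite product $X'' := \prod_{p:\, \mathsf{v}_p(b) > 0} X_p^{\mathsf{v}_p(b)} \subset H$ satisfies $\gcd(\varphi(X'')) = b$, hence $\phi(X'')_v = bD$. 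Setting $X := h^{-1}X'' \subset \mathsf{q}(H)$ and using the identity $(qS)_v = q S_v$ in the abelian group $\mathsf{q}(D)$ gives $\phi(X)_v = \phi(h)^{-1} b D = aD$.

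Item (4)(b) is the technical heart. Set $a = \gcd(\varphi(X))$ and $a' = \gcd(\varphi(X_v))$; the trivial inclusion $X \subset X_v$ already gives $a' \mid a$, so the content is the reverse $a \mid a'$. By (1) combined with $(D \DP \phi(X)) = a^{-1}D$ in $\mathcal{F}(P)$, we have $(H \DP X) = \phi^{-1}(a^{-1}D)$, and the main obstacle is to construct, for each $p \in \supp(a)$, a witness $q_p \in (H \DP X)$ attaining the extreme valuation $\mathsf{v}_p(\phi(q_p)) = -\mathsf{v}_p(a)$. The divisor theory supplies $y_{p,p} \in X_p$ with $\mathsf{v}_p(\varphi(y_{p,p})) = 1$ and, for each $s \neq p$, some $y_{p,s} \in X_s$ with $\mathsf{v}_p(\varphi(y_{p,s})) = 0$; letting $v = y_{p,p}^{n_p}$ and $u = \prod_{s \in T \setminus \{p\}} y_{p,s}^{k_s}$ for a finite set $T \subset P$ containing $\supp(\varphi(v)) \cup \supp(a)$ and each $k_s$ sufficiently large, the element $q_p := uv^{-1} \in \mathsf{q}(H)$ sits in $\phi^{-1}(a^{-1}D)$ while $\mathsf{v}_p(\phi(q_p)) = -\mathsf{v}_p(a)$. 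Then, if some $y \in X_v$ had $\mathsf{v}_p(\varphi(y)) < \mathsf{v}_p(a)$ for some $p \in \supp(a)$, the inequality $\mathsf{v}_p(\phi(yq_p)) < 0$ would force $yq_p \notin H$, contradicting $y \in (H \DP (H \DP X))$; hence $\varphi(X_v) \subset aD$ and $a \mid a'$.
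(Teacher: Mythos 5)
Your proposal is correct. Parts (1), (3) and (4)(a) are essentially the paper's argument: (1) is the same unwinding of $\phi^{-1}(D)=H$ from Lemma \ref{4.8}; (3) is the same combination of $S_v=\gcd(S)D$ in $\mathcal F(P)$ with part (2); and your element $h$ in (4)(a) plays exactly the role of the paper's $u\in H$ with $\varphi(u)a\in D$, the rest being the same product-set construction. In (2) you take a mild variant: instead of applying (1) to $\mathfrak a^{-1}$ and concluding $x\mathfrak a^{-1}\subset H$, hence $x\in(\mathfrak a^{-1})^{-1}=\mathfrak a$, you use the representation $\mathfrak a=\bigcap\{Hc : \mathfrak a\subset Hc\}$ from Lemma \ref{3.2}.4; this works because the same intersection formula in $D$ shows $\phi(\mathfrak a)_v\subset D\phi(c)$ whenever $\phi(\mathfrak a)\subset D\phi(c)$. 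The genuine divergence is (4)(b). The paper proves the stronger assertion $\phi(X)_v=\phi(X_v)_v$ for arbitrary $X\subset\mathsf q(H)$ by showing $\big(D\DP\phi(X)\big)\subset\big(D\DP\phi(X_v)\big)$: each $a\in\big(D\DP\phi(X)\big)$ is realized as $aD=\phi(Y)_v$ via (4)(a), and the containment $XY\subset H$ is upgraded to $X_vY\subset H$ by passing to $v$-closures. You instead argue prime-by-prime, using the defining sets $X_p$ of the divisor theory to manufacture, for each $p\in\supp(a)$, an explicit witness $q_p\in X^{-1}=\phi^{-1}(a^{-1}D)$ with $\mathsf v_p(\phi(q_p))=-\mathsf v_p(a)$; the relation $yq_p\in H$ for $y\in X_v$ then forces $\mathsf v_p(\varphi(y))\ge\mathsf v_p(a)$. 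I checked the valuation estimates for $q_p=uv^{-1}$ (at $p$, at the primes of $T$ with $k_s\ge\mathsf v_s(\varphi(v))$, and outside $T$), and they do place $q_p$ in $a^{-1}D$, so the argument is complete. Your route is more concrete and avoids the auxiliary manipulation of $(XY)_v$, at the price of being tied to the free abelian target; the paper's route yields the more general identity $\phi(X)_v=\phi(X_v)_v$ for all $X\subset\mathsf q(H)$ as a by-product.
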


\begin{proof}
We observe that  $H$ is normalizing by Lemma \ref{4.10}, and hence
$(H \DPl X) = (H \DPr X)$ for all $X \subset \mathsf q (H)$ by Lemma
\ref{4.5}.(b).  We will need the following fact for a commutative
monoid $M$ satisfying $\text{\rm GCD} (E) \ne \emptyset$ for all $E
\subset M$ (see \cite[Theorem 11.5]{HK98}): for any subset $X
\subset M$ we have
\[
X_v = dM \qquad \text{if and only if} \qquad \text{\rm GCD} (X) = dM^{\times} \,. \tag{$*$}
\]

\smallskip
1. If $x \in X^{-1}$, then $xX \subset H$, hence $\phi(x) \phi(X) =
\phi(xX) \subset D$, and $\phi(x) \in \phi(X)^{-1}$, which implies
$x \in \phi^{-1} \bigl(\phi(X)^{-1}\bigr)$.

Conversely, if $x \in \phi^{-1}\bigl(\phi(X)^{-1}\bigr)$, then
$\phi(xX) = \phi(x) \phi(X) \subset D$. Hence it follows that $xX
\subset \phi^{-1}(D) = H$ and $x \in X^{-1}$.

\smallskip
2. Let $\mathfrak a \in \mathcal F_v (H)$. Clearly, we have
$\mathfrak a \subset \phi^{-1} \big( \phi (\mathfrak a)_v \big)$.
Conversely, let $x \in \phi^{-1} \big( \phi (\mathfrak a)_v \big)$.
Then $\phi (x) \in \phi (\mathfrak a)_v = \big( \phi ( \mathfrak
a)^{-1} \big)^{-1}$, and hence by 1., we get
\[
\phi ( x \mathfrak a^{-1} ) = \phi \big( x \phi^{-1} ( \phi (
\mathfrak a)^{-1}) \big) \subset \phi (x) \phi ( \mathfrak a)^{-1}
\subset D \,.
\]
Since $H =  \phi^{-1} (D)$ by Lemma \ref{4.8}, it follows that $x
\mathfrak a^{-1} \subset H$ and thus $x \in (\mathfrak a^{-1})^{-1}
= \mathfrak a$.

\smallskip
3. If $a = \gcd \big( \varphi (\mathfrak a) \big)$, then $aD =
\varphi (\mathfrak a)_v$ by $(*)$, and 2.
implies that $\mathfrak a = \varphi^{-1} \big( \varphi (\mathfrak
a)_v \big) = \varphi^{-1} (aD)$.

\smallskip
4.  Suppose that   $D = \mathcal F (P)$.

4.(a) First we consider an element $a \in D$. Then $a = p_1 \cdot
\ldots \cdot p_l$ with $l \in \mathbb N_0$ and $p_1, \ldots, p_l \in
P$. For every $\nu \in [1, l]$ there exists a finite non-empty set
$X_{\nu} \subset H$ such that $p_{\nu} = \gcd \big( \varphi
(X_{\nu}) \big)$. Then the product set $X_1 \cdot \ldots \cdot X_l
\subset H$ is finite and $a = \gcd \big( \varphi (X_1 \cdot \ldots
\cdot X_l) \big)$ (where we use the convention that $X_1 \cdot
\ldots \cdot X_l = \{1\}$ if $l=0$). Now $(*)$ implies that $aD =
\varphi (X_1 \cdot \ldots \cdot X_l)_v$.

Let $a \in \mathsf q (D)$ be given. Then there is some $u \in H$
such that $\varphi (u)a \in D$. If $X \subset H$ is a finite
non-empty set with $\varphi (u) aD = \varphi (X)_v$, then $aD = \phi
(u^{-1}X)_v$.

\smallskip
4.(b) We start with the following assertion.

\begin{enumerate}
\item[{\bf A.}\,] For every $X \subset \mathsf q (H)$ we have $\phi (X)_v = \phi (X_v)_v$.
\end{enumerate}

Suppose that {\bf A} holds, let
$X \subset H$ and $a = \gcd \big( \varphi (X) \big)$. Applying {\bf A} and $(*)$ we infer that  $aD
= \varphi (X)_v = \varphi (X_v)_v$ and hence $a = \gcd \big( \varphi
(X_v) \big)$ by 3.

\smallskip
{\it Proof of} \,{\bf A}.\, Let $X \subset \mathsf q (H)$. Clearly,
we have $\phi (X)_v \subset \phi (X_v)_v$. To show the converse, we
assert that $\big(D \DP \phi (X) \big) \subset \big( D \DP \phi
(X_v) \big)$. This implies that
\[
\phi (X_v)_v = \big( D \DP \phi (X_v) \big)^{-1} \subset \big(D \DP
\phi (X) \big)^{-1} = \phi (X)_v \,.
\]
Let $a \in \big(D \DP \phi (X) \big) \subset \mathsf q (D)$. By
4.(a), there is a finite non-empty set $Y \subset \mathsf q (H)$
with $aD = \phi (Y)_v$. Then $\phi (XY) \subset \phi (X) aD \subset
D$ and hence $XY \subset H$. This implies that $X_v Y \subset (XY)_v
\subset H$, hence $\phi (X_v) \phi (Y) = \phi (X_v Y) \subset D$ and
therefore $\phi (X_v) \phi (Y)_v \subset \big( \phi (X_v) \phi (Y)
\big)_v \subset D$. Thus it follows that $\phi (X_v)a \subset \phi
(X_v) \phi (Y)_v \subset D$ and $a \in \big( D \DP \phi (X_v)
\big)$.
\end{proof}

\medskip
\begin{corollary} \label{4.12}
Let $\varphi \colon H \to D$ be a divisor homomorphism into a
normalizing monoid $D$.
\begin{enumerate}
\item If $D$ is $v$-noetherian, then $H$ is $v$-noetherian.

\smallskip
\item If $D$ is a Krull monoid, then $H$ is a normalizing Krull
monoid.
\end{enumerate}
\end{corollary}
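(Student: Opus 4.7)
The plan is to derive Part~1 by pushing chains of $v$-ideals from $H$ up to $D$ via $\phi = \mathsf q (\varphi)$, exploiting that $D$ is $v$-noetherian, and then pulling back with Lemma \ref{4.11}.2; Part~2 will then follow by combining this with Lemma \ref{4.10}. Before starting, I would record that, since $\varphi(H) \subset D = \mathsf N (D)$, Lemma \ref{4.10}.2.(b) makes $H$ normalizing, so Lemma \ref{4.5} ensures that the $v$-theory in both $H$ and $D$ is well behaved (left quotients equal right quotients, left ideals are two-sided).

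For Part~1, I would take an ascending chain $(\mathfrak a_n)_{n \ge 0}$ in $\mathcal I_v (H)$ and set $\mathfrak A_n := \phi(\mathfrak a_n)_v$, where the $v$-closure is taken in $\mathsf q (D)$. The key verification is that $\mathfrak A_n \in \mathcal I_v (D)$: since $\phi(\mathfrak a_n) \subset D$, the element $1$ lies in $(D \DPr \phi(\mathfrak a_n))$, which forces $\mathfrak A_n \subset D$; divisoriality is automatic from the definition of the $v$-closure; and the two-sided property follows from $D$ being normalizing via Lemma \ref{4.5}. Monotonicity of the $v$-operation yields an ascending chain $\mathfrak A_0 \subset \mathfrak A_1 \subset \cdots$ in $\mathcal I_v (D)$, which stabilizes at some $n_0$ because $D$ is $v$-noetherian. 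Applying Lemma \ref{4.11}.2 to each $\mathfrak a_n \in \mathcal F_v (H)$, one then gets $\mathfrak a_n = \phi^{-1}(\mathfrak A_n) = \phi^{-1}(\mathfrak A_{n_0}) = \mathfrak a_{n_0}$ for all $n \ge n_0$, establishing the ACC on $\mathcal I_v (H)$.

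Part~2 is then essentially immediate. If $D$ is a Krull monoid, it is completely integrally closed and $v$-noetherian; Lemma \ref{4.10}.2.(a) transfers complete integral closure to $H$, Part~1 gives $v$-noetherianity, and Lemma \ref{4.10}.2.(b) gives that $H$ is normalizing. Therefore $H$ is a normalizing Krull monoid.

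The main obstacle is the bookkeeping around $\mathfrak A_n$: one must verify not merely that $\mathfrak A_n$ is a divisorial subset of $\mathsf q (D)$, but that it is genuinely a divisorial ideal of $D$ (that is, contained in $D$ and two-sided), so that the ACC on $\mathcal I_v (D)$ can actually be applied. Once this is in place, the transfer via $\phi$ and $\phi^{-1}$ is uniform and the remaining steps reduce to direct applications of the quoted lemmas.
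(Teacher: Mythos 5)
Your proposal is correct and follows essentially the same route as the paper: push the chain $(\mathfrak a_n)$ forward to $\bigl(\varphi(\mathfrak a_n)_v\bigr)$ in $D$, invoke the ACC there, and pull back with Lemma \ref{4.11}.2, then settle Part~2 via Lemma \ref{4.10}.2. The extra bookkeeping you supply (that each $\varphi(\mathfrak a_n)_v$ is genuinely a divisorial ideal of $D$, using that $D$ is normalizing) is a correct elaboration of what the paper leaves implicit.
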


\begin{proof}
1. If $(\mathfrak a_n)_{n \ge 0}$ is an ascending chain of
divisorial ideals of $H$, then $\big( \varphi (\mathfrak a_n)_v
\big)_{n \ge 0}$ is an ascending chain of divisorial ideals of $D$.
If this chain becomes stationary, then so does the initial chain in
$H$, because $\mathfrak a_n = \phi^{-1} \big( \phi (\mathfrak a_n)_v
\big)$ for all $n \ge 0$ by Lemma \ref{4.11}.2.

\smallskip
2. If $D$ is a normalizing Krull monoid, then $H$ is completely
integrally closed by Lemma \ref{4.10}.2, and hence the assertion
follows from 1.
\end{proof}

\medskip
\begin{theorem}[\bf A divisor theoretic characterization of normalizing Krull monoids]
\label{4.13}~

\noindent \ Let $H$ be a  monoid. Then the following statements are
equivalent{\rm \,:}
\begin{enumerate}
\item[(a)] The map $\partial \colon H \to \mathcal I_v^* (H)$, defined by
      $\partial (a) = aH$ for all $a \in H$, is a divisor theory.

\smallskip
\item[(b)] $H$ has a divisor theory.

\smallskip
\item[(c)] There exists a divisor homomorphism $\varphi \colon H \to
           \mathcal F (P)$ into a free abelian monoid.

\smallskip
\item[(d)] $H$ is a normalizing Krull monoid.

\end{enumerate}
\end{theorem}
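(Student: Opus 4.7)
The proof is organized as a cycle (a) $\Rightarrow$ (b) $\Rightarrow$ (c) $\Rightarrow$ (d) $\Rightarrow$ (a), three of whose implications are essentially free, while the last one (d) $\Rightarrow$ (a) carries the content.

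The implication (a) $\Rightarrow$ (b) is immediate, and (b) $\Rightarrow$ (c) holds by the very definition of a divisor theory (Definition \ref{4.1}.2). For (c) $\Rightarrow$ (d), the plan is to appeal to Corollary \ref{4.12}.2: a free abelian monoid $\mathcal F(P)$ is commutative, hence normalizing, and it is a Krull monoid (its $v$-ideals are generated by single elements, so both complete integral closure and the ascending chain condition are trivial). Given a divisor homomorphism $\varphi \colon H \to \mathcal F(P)$, Corollary \ref{4.12}.2 then yields that $H$ is a normalizing Krull monoid.

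The real work is in (d) $\Rightarrow$ (a). Assume $H$ is a normalizing Krull monoid. By Lemma \ref{4.7}.4, the map $\partial \colon H \to \mathcal I_v^*(H)$ is already a cofinal divisor homomorphism, and by Theorem \ref{3.14}, $\mathcal I_v^*(H)$ is a free abelian monoid with basis $P = v\text{-}\max(H) = v\text{-}\spec(H) \setminus \{\emptyset\}$. It therefore remains to verify the greatest common divisor condition: for each $\mathfrak p \in P$ I must produce a finite non-empty set $X \subset H$ with $\mathfrak p = \gcd(\partial(X))$ in $\mathcal I_v^*(H)$. Since $H$ is $v$-noetherian, Proposition \ref{3.13}.2 (applied to the $v$-invertible $v$-ideal $\mathfrak p$) yields a finite non-empty set $E \subset \mathfrak p$ such that $\mathfrak p = (HEH)_v$. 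Because $H$ is normalizing, each $e \in E$ satisfies $eH = He$, hence $HeH = eH$, so $HEH = \bigcup_{e \in E} eH$, giving $\mathfrak p = \bigl(\bigcup_{e \in E} eH\bigr)_v$. By Proposition \ref{3.12}.4, the $v$-closure of a union in $\mathcal I_v^*(H)$ is precisely the $\gcd$ in this (free abelian, reduced) monoid, so $\mathfrak p = \gcd(\{eH \mid e \in E\}) = \gcd(\partial(E))$, as required.

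The step I expect to be least routine is the passage from the finite generating set $E$ of Proposition \ref{3.13}.2 to the gcd statement inside the free abelian monoid $\mathcal I_v^*(H)$; the crucial point is that in a normalizing monoid the two-sided ideal $HEH$ reduces to the left-module $\bigcup_{e \in E} eH$, which is exactly what allows the divisor-theoretic gcd to be witnessed by images of elements of $H$ rather than by more general $v$-ideals. Everything else is a direct application of results already proved in Sections \ref{3} and \ref{4}.
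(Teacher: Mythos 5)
Your proposal is correct and follows essentially the same route as the paper: the implications (a) $\Rightarrow$ (b) $\Rightarrow$ (c) are immediate, (c) $\Rightarrow$ (d) is Corollary \ref{4.12}.2 applied to the normalizing Krull monoid $\mathcal F(P)$, and (d) $\Rightarrow$ (a) combines Lemma \ref{4.7}.4, Theorem \ref{3.14}, Proposition \ref{3.13}.2 and Proposition \ref{3.12}.4 exactly as in the paper, including the key observation that normality turns $HEH$ into the union $\bigcup_{e \in E} eH$ of divisorial principal ideals. No gaps.
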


\begin{proof}
(a) \,$\Rightarrow$\, (b) \,$\Rightarrow$\, (c) \ Obvious.

\smallskip
(c) \,$\Rightarrow$\, (d) \ Since $\mathcal F (P)$ is a normalizing
Krull monoid, this follows from Corollary \ref{4.12}.2.

\smallskip
(d) \,$\Rightarrow$\, (a) \ By Lemma \ref{4.7}.4, $\partial \colon H
\to \mathcal I_v^* (H)$ is a cofinal divisor
      homomorphism. Theorem \ref{3.14}
shows that  $\mathcal I_v^* (H)$ is a free abelian monoid with basis
$v$-$\spec (H) \setminus \{\emptyset\}$. Let $\mathfrak p$ be a
non-empty divisorial prime ideal. By Proposition \ref{3.13}.2, there
exists a finite set $E = \{a_1, \ldots, a_n \} \subset \mathfrak p$
such that $(HEH)_v = \mathfrak p$. Since $H$ is normalizing, we get
$HEH = a_1H \cup \ldots \cup a_nH$, where $a_1H, \ldots, a_nH$ are
divisorial ideals by Lemmas \ref{3.4} and \ref{4.5}. Now Proposition
\ref{3.12}.4 implies that
\[
\mathfrak p = (a_1H \cup \ldots \cup a_nH)_v = \gcd \big( \partial
(a_1), \ldots, \partial (a_n) \big) \,. \qedhere
\]
\end{proof}

\medskip
\begin{corollary} \label{4.14}
Let $H$ be a  monoid.
\begin{enumerate}
\item If $H$ is a Krull monoid, then $\mathsf N (H) \subset H$ is a normalizing Krull monoid, and
      there is a monomorphism $f \colon \mathcal I_v^* \big( \mathsf N (H) \big)
      \to \mathcal I_v^* (H)$ which maps $\mathcal P \big( \mathsf N (H) \big)$ onto $\mathcal P^{\mathsf n} (H)$.

\smallskip
\item $\mathsf N (H)$ is a normalizing Krull monoid if and only if $\mathsf N (H)_{\red}$ is a normalizing
      Krull monoid. If this holds, then both, $\mathsf N (H)_{\red} \cong \mathcal P^{\mathsf n} (H)$ and $\mathsf C (H)$, are
      commutative Krull monoids.
\end{enumerate}
\end{corollary}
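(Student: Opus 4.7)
The plan is to derive everything from Theorem \ref{4.13} and the structural material already at hand. For Part 1, I would first show that $\mathsf N (H)$ is a normalizing Krull monoid by exhibiting a divisor homomorphism into a free abelian monoid. Since $H$ is Krull, Theorem \ref{3.14} makes $\mathcal I_v^* (H)$ free abelian; by Lemma \ref{4.3}.2, $\mathsf N (H) \subset H$ is a normalizing saturated submonoid. The composite
\[
\varphi \colon \mathsf N (H) \xrightarrow{\ \pi\ } \mathsf N (H)_{\red} \xrightarrow{\ \sim\ } \mathcal P^{\mathsf n} (H) \hookrightarrow \mathcal I_v^* (H)
\]
is a divisor homomorphism by Lemma \ref{4.6}.1 (with $\pi$ a cofinal divisor homomorphism by Lemma \ref{4.6}.2, the isomorphism from Lemma \ref{4.7}.3, and the saturated inclusion from Lemma \ref{4.7}.2), so Theorem \ref{4.13} gives the claim. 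The monomorphism $f$ is then defined by $f (\mathfrak a) = \gcd \{ aH \mid a \in \mathfrak a \}$, computed in the free abelian monoid $\mathcal I_v^* (H)$; a direct check yields $f (a \mathsf N (H)) = aH$ (the gcd is attained at $b = 1 \in \mathsf N (H)$), so $f$ sends $\mathcal P (\mathsf N (H))$ onto $\mathcal P^{\mathsf n} (H)$. For injectivity, set $\phi = \mathsf q (\varphi)$; if $f (\mathfrak a) = f (\mathfrak b)$, then the principal $v$-ideals of $\mathcal I_v^* (H)$ generated by these gcds coincide, i.e., $\phi (\mathfrak a)_v = \phi (\mathfrak b)_v$, whence Lemma \ref{4.11}.2 yields $\mathfrak a = \phi^{-1} (\phi (\mathfrak a)_v) = \mathfrak b$. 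The main obstacle is verifying multiplicativity of $f$: relating the $v$-closure of $\mathfrak a \mathfrak b$ taken inside the saturated submonoid $\mathsf N (H)$ to the corresponding gcd in the ambient free abelian monoid $\mathcal I_v^* (H)$, which is handled via Lemma \ref{4.11}.2 combined with the distributivity of gcd in a free abelian monoid.

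For the equivalence in Part 2, I would argue in two directions. If $\mathsf N (H)_{\red}$ is a normalizing Krull monoid, then its divisor theory (Theorem \ref{4.13}) composed with the cofinal divisor homomorphism $\pi \colon \mathsf N (H) \to \mathsf N (H)_{\red}$ (Lemma \ref{4.6}.2) yields a divisor homomorphism from $\mathsf N (H)$ into a free abelian monoid, and Corollary \ref{4.12}.2 concludes. Conversely, if $\mathsf N (H)$ is a normalizing Krull monoid, the inclusion-preserving bijection of Lemma \ref{4.4}.4 between divisorial ideals transfers ACC, and Lemma \ref{3.10}'s characterization of complete integral closure transfers across the same correspondence, giving that $\mathsf N (H)_{\red}$ is a normalizing Krull monoid.

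Finally, assuming $\mathsf N (H)$ is a normalizing Krull monoid: by Lemma \ref{4.7}.4, $\partial \colon \mathsf N (H) \to \mathcal I_v^* (\mathsf N (H))$ is a divisor theory, and by Proposition \ref{3.12}.3 together with Theorem \ref{3.14}, its target is commutative (in fact free abelian). Since $\partial$ factors injectively through $\mathsf N (H)_{\red}$ (Lemma \ref{4.6}.2), the latter embeds into a commutative monoid and is therefore commutative; being a Krull monoid by the equivalence just proved, it is a commutative Krull monoid, and Lemma \ref{4.7}.3 transfers this to $\mathcal P^{\mathsf n} (H)$. For $\mathsf C (H)$, Lemma \ref{4.3}.3 exhibits it as a commutative saturated submonoid of $\mathsf N (H)$; composing its inclusion with $\partial$ gives a divisor homomorphism into the free abelian monoid $\mathcal I_v^* (\mathsf N (H))$, and Theorem \ref{4.13} delivers a (necessarily commutative) Krull monoid.
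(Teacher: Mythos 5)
Most of what you do is sound, and in places it is a legitimate (even cleaner) variant of the paper's route: proving that $\mathsf N (H)$ is a normalizing Krull monoid via the composite divisor homomorphism $\mathsf N (H) \to \mathsf N (H)_{\red} \cong \mathcal P^{\mathsf n} (H) \hookrightarrow \mathcal I_v^* (H)$ and Theorem \ref{4.13}, and obtaining commutativity of $\mathsf N (H)_{\red}$ by embedding it into the free abelian monoid $\mathcal I_v^* \big( \mathsf N (H) \big)$, are both fine. The genuine gap is the multiplicativity of your map $f (\mathfrak a) = \gcd \{ aH \mid a \in \mathfrak a \}$. Writing $\varphi \colon \mathsf N (H) \to \mathcal I_v^* (H)$ for the composite, what you need is exactly $\gcd \big( \varphi ( (\mathfrak a \mathfrak b)_v ) \big) = \gcd \big( \varphi (\mathfrak a \mathfrak b) \big)$, i.e.\ assertion {\bf A} of Lemma \ref{4.11}.4(b). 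That assertion is proved only for divisor theories; Lemma \ref{4.11}.2 gives $\mathfrak c = \phi^{-1} \big( \phi (\mathfrak c)_v \big)$, which is the opposite inclusion and does not let you pull the $v$-closure outside $\varphi$. The embedding $\mathcal P^{\mathsf n} (H) \hookrightarrow \mathcal I_v^* (H)$ is in general only a divisor homomorphism (a divisorial prime of $H$ need not contain any normalizing element, cf.\ Example \ref{5.2}), so Lemma \ref{4.11}.4 is not available to you.

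This is not just a missing citation: the gcd-map genuinely fails to be multiplicative for saturated embeddings that are not divisor theories. Take $D = \mathcal F (\{p, q, r\})$ and the saturated (hence Krull) submonoid $S = \{ p^a q^b r^c \mid a + b = 2c \} \subset D$. The sets $\mathfrak a = \{ p^a q^b r^c \in S \mid a \ge 1 \}$ and $\mathfrak b = \{ p^a q^b r^c \in S \mid b \ge 1 \}$ lie in $\mathcal I_v^* (S)$, with $\gcd (\mathfrak a) = pr$ and $\gcd (\mathfrak b) = qr$, whereas $(\mathfrak a \mathfrak b)_v = \{ p^a q^b r^c \in S \mid a \ge 1, \, b \ge 1 \}$ contains $pqr$, so $\gcd \big( (\mathfrak a \mathfrak b)_v \big) = pqr \ne pqr^2 = \gcd (\mathfrak a) \gcd (\mathfrak b)$. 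This is why the paper defines $f$ differently, namely $f (\mathfrak a) = \big( H \DPl ( \mathsf N (H) \DPr \mathfrak a ) \big)$, where the inner quotient is formed in $\mathsf q \big( \mathsf N (H) \big)$ and is therefore smaller than $(H \DPr \mathfrak a)$; the resulting ideal contains, and need not equal, your $(\mathfrak a H)_v$. Multiplicativity is then obtained by factoring $f$ through the two inversion maps $\mathfrak a \mapsto \big( \mathsf N (H) \DP \mathfrak a \big)$ and $\mathfrak b \mapsto (H \DP \mathfrak b)$, each a homomorphism of the relevant commutative groups (Proposition \ref{3.12} and Lemma \ref{3.6}). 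You should either adopt that definition or give a genuinely new argument that the pathology above cannot occur for the particular embedding $\mathsf N (H) \hookrightarrow \mathcal I_v^* (H)$; as written, the step does not go through.
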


\begin{proof}
We set $S = \mathsf N (H)$.

1. Suppose that $H$ is a Krull monoid. By Lemma \ref{4.3}.2, $S \subset H$ is a normalizing saturated
submonoid. Thus the inclusion map $S \hookrightarrow H$ satisfies
the assumption of Lemma \ref{4.10}.2, and hence $S$ is completely
integrally closed.

Let $f \colon \mathcal I_v^* ( S )
      \to \mathcal I_v^* (H)$ be defined by $f ( \mathfrak a) =
      \big( H \DPl (S \DPr \mathfrak a) \big)$ for all $\mathfrak a
      \in \mathcal I_v^* (S)$ (with the same notational conventions
      as in Lemma \ref{4.9}; in particular, $A = (S \DPr \mathfrak
      a) \subset \mathsf q (S)$).

We check that $f (\mathfrak a) \in \mathcal I_v^* (H)$. If $x \in
\mathsf q (H)$ with $xA \subset H$, then $xHA = xAH \subset H$, and
thus $(H \DPl A)$ is a right module of $H$. By Lemma \ref{4.9}.2,
$(H \DPl A)$ is a divisorial left ideal of $H$. Since $H$ is a Krull
monoid, it follows that $f ( \mathfrak a)$ is a divisorial ideal of
$H$, and hence $f (\mathfrak a) \in \mathcal I_v^* (H)$.

Since $f (\mathfrak a) \cap S = \mathfrak a$ by Lemma \ref{4.9}.2,
$f$ is injective and $S$ is $v$-noetherian because $H$ is
$v$-noetherian. If $a \in S$, then, by  Lemma \ref{3.4}.4, we infer
that
\[
f (Sa) = \big( H \DPl (S \DPr   Sa) \big) = ( H \DPl a^{-1}S ) = ( H
\DPl a^{-1}SH ) = Ha \,.
\]
This shows that $f$ maps $\mathcal P \big( S \big)$ onto $\mathcal
P^{\mathsf n} (H)$. Since $f_1 \colon \mathcal I_v^* (S) \to
\mathcal I_v^* (S)$, defined by $\mathfrak a \mapsto (S \DP
\mathfrak a)$, and $f_2 \colon \mathcal I_v^* (H) \to \mathcal I_v^*
(H)$, defined by $\mathfrak a \mapsto (H \DP \mathfrak a)$, are
homomorphisms, $f = f_2 \circ f_1$ (use Lemma \ref{3.6}) is a
homomorphism.

\smallskip
2. We freely use Theorem \ref{4.13}. If $S_{\red}$ is a normalizing Krull monoid, then there exists a divisor homomorphism $\varphi \colon S_{\red} \to \mathcal F (P)$. If $\pi \colon S \to S_{\red}$ denotes the canonical epimorphism, then $\varphi \circ \pi \colon S \to \mathcal F (P)$ is a divisor homomorphism by Lemma \ref{4.6} and thus $S$ is a normalizing Krull monoid. Suppose that $S$ is a normalizing Krull monoid. Again, by Theorem \ref{4.13}.(b) and by Lemma \ref{4.6}.3, it follows that $S_{\red}$ is a normalizing Krull monoid. Lemma \ref{4.7} shows that $S_{\red}$ and $\mathcal P^{\mathsf n} (H)$ are isomorphic, and that $\mathcal P^{\mathsf n} (H)$ is a submonoid of the commutative monoid $\mathcal I_v^* (H)$. Lemma \ref{4.3}.3 implies that $\mathsf C (H) \subset S$ is saturated, and thus $\mathsf C (H)$ is a Krull monoid  by Corollary \ref{4.12}.2.
\end{proof}

\medskip
Our next step is to introduce a concept of class groups, and then to
show a uniqueness result for divisor theories. Let $\varphi \colon H
\to D$ be a homomorphism of monoids. The group
\[
\mathcal C (\varphi) = \mathsf q (D) /\mathsf q \bigl(\varphi (H)
\bigr)
\]
is called the  {\it class group}  of $\varphi$. This coincides with
the notion in the commutative setting (see \cite[Section
2.4]{Ge-HK06a}), and we will point out that in case of a Krull
monoid $H$ and a divisor theory $\varphi \colon \mathsf N (H) \to D$
the class group $\mathcal C (\varphi)$ is isomorphic to the
normalizing class group of $H$ (see Equations (\ref{4.16}) and
(\ref{4.17}) at the end of this section).

For $a \in \mathsf q(D)$, we denote by
\[
[a]_\varphi = [a] = a \, \mathsf q\bigl( \varphi (H) \bigr) \in
\mathcal C (\varphi)
\]
the class containing \ $a$.  As usual, the class group \ $\mathcal
C(\varphi)$ \ will be written additively, that is,
\[
[ab] = [a] + [b] \quad \text{for all} \quad a,\,b \in \mathsf
q(D)\,,
\]
and then \,$[1] = 0$ is the zero element of $\mathcal C(\varphi)$.
If $\varphi \colon H \to D$ is a divisor homomorphism, then a
straightforward calculation shows that for an element $\alpha \in
D$, we have $[\alpha] = 0$ if and only if $\alpha \in \varphi (H)$.
If $D = \mathcal F (P)$ is free abelian, then $G_P = \{[p] \mid p
\in P \} \subset \mathcal C (\varphi)$ is the set of classes
containing prime divisors.

Consider the special case $H \subset D$, $\varphi = (H
\hookrightarrow D)$, and suppose that $\mathsf q (H) \subset \mathsf
q (D)$. Then $\mathcal C (\varphi) = \mathsf q (D)/ \mathsf q (H)$,
and we define
\[
D/H = \{ [a] = a \mathsf q (H) \mid a \in D \} \subset \mathcal C
(\varphi) \,.
\]
Then $D/H \subset \mathcal C (\varphi)$ is a submonoid with quotient
group $\mathcal C ( \varphi)$, and $D/H = \mathcal C ( \varphi)$ if
and only if $H \subset D$ is cofinal.

Suppose that $H$ is a normalizing Krull monoid, and let $\partial
\colon H \to \mathcal I_v^* (H)$ be as in Theorem \ref{4.13}. Then
$\mathcal P^{\mathsf n} (H) = \mathcal P (H) \subset \mathcal I_v^*
(H)$ is cofinal, and
\[
\mathcal C (\partial) = \mathcal I_v^* (H)/ \mathcal P (H) =
\mathcal F_v^{\times}(H)/ \mathsf q \big( \mathcal P (H) \big)
\]
is called the {\it $v$-class group} of $H$, and will be denoted by
$\mathcal C_v (H)$.

We continue with a uniqueness result for divisor theories. Its
consequences for class groups will be discussed afterwards. We
proceed as in the commutative case (\cite[Section 2.4]{Ge-HK06a}).
Recently, W.A. Schmid gave a  more explicit approach valid in case
of torsion class groups (\cite[Section 3]{Sc10a}).

\medskip
\begin{proposition}[\bf Uniqueness of Divisor Theories]
\label{4.15}~

Let $H$ be a monoid.
\begin{enumerate}
\item Let $\varphi \colon H \to F = \mathcal  F (P)$ be a divisor theory.
      Then the maps $\varphi^* \colon F \to \mathcal I_v^* (H)$ and
      $\overline \varphi \colon \mathcal C (\varphi)  \to \mathcal C_v (H)$, defined
      by
      \[
      \qquad \varphi^*(a) = \varphi^{-1} (aF)_v \quad \text{and} \quad
      \overline\varphi([a]_\varphi) = [\varphi^{-1} (aF)_v]  \ \text{ for
      all } \ a \in F \,,
      \]
      are isomorphisms.

\smallskip
\item If \ $\varphi_1 \colon H \to F_1$ and $\varphi_2 \colon H \to
F_2$ are divisor theories, then there is a unique isomorphism $\Phi
\colon F_1 \to F_2$ such that $\Phi \circ \varphi_1 = \varphi_2$. It
induces an isomorphism $\overline \Phi \colon \mathcal C(\varphi_1)
\to \mathcal C(\varphi_2)$, given by $\overline \Phi
([a]_{\varphi_1}) = [\Phi(a)]_{\varphi_2}$ for all $a \in F_1$.
\end{enumerate}
\end{proposition}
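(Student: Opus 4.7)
By Theorem~\ref{4.13}, the existence of a divisor theory forces $H$ to be a normalizing Krull monoid, so $\mathcal I_v^*(H)$ is a free abelian monoid by Theorem~\ref{3.14}, and all results of Section~\ref{4} apply. For part 1, the plan is to construct an explicit two-sided inverse $g \colon \mathcal I_v^*(H) \to F$, $g(\mathfrak c) = \gcd\bigl(\varphi(\mathfrak c)\bigr)$, verify that $\varphi^*$ is a monoid isomorphism with this inverse, and then deduce the class-group statement by extending $\varphi^*$ to quotient groups. Part 2 will then follow by composition.

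The key technical point is that for every $a \in F$ the set $\mathfrak a := \varphi^{-1}(aF)$ is already a non-empty divisorial ideal of $H$, so the $v$-closure in the definition of $\varphi^*$ is superfluous and $\varphi^*(a) \in \mathcal I_v^*(H)$ by Proposition~\ref{3.12}.1. Non-emptiness follows from Lemma~\ref{4.11}.4(a): there is a finite $X \subset \mathsf q(H)$ with $aF = \phi(X)_v$; since $\phi(X) \subset \phi(X)_v \subset F$, Lemma~\ref{4.8} forces $X \subset \phi^{-1}(F) = H$, hence $X \subset \mathfrak a$. The two-sided ideal property is immediate from the commutativity of $F$. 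For divisoriality, Lemma~\ref{4.11}.2 applied to $\mathfrak a_v$, combined with Assertion~A in the proof of Lemma~\ref{4.11}.4, gives
\[
\mathfrak a_v \;=\; \phi^{-1}\bigl(\phi(\mathfrak a_v)_v\bigr) \;=\; \phi^{-1}\bigl(\phi(\mathfrak a)_v\bigr) \;\subset\; \phi^{-1}(aF) \;=\; \mathfrak a.
\]

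The identity $g \circ \varphi^* = \id_F$ then follows from two facts: $\varphi(\mathfrak a) \subset aF$ yields $a \mid \gcd\bigl(\varphi(\mathfrak a)\bigr)$, while the finite $X \subset \mathfrak a$ above satisfies $\gcd\bigl(\varphi(X)\bigr) = a$ by the equivalence $(*)$ from the proof of Lemma~\ref{4.11}, forcing the reverse divisibility. The converse $\varphi^* \circ g = \id$ is exactly Lemma~\ref{4.11}.3. The inclusion $\varphi^*(a) \cdv \varphi^*(b) \subset \varphi^*(ab)$ is immediate from commutativity of $F$; equality follows by computing
\[
\gcd\bigl(\varphi(\varphi^*(a) \cdv \varphi^*(b))\bigr) \;=\; \gcd\bigl(\varphi(\varphi^*(a))\bigr)\,\gcd\bigl(\varphi(\varphi^*(b))\bigr) \;=\; ab,
\]
which combines Lemma~\ref{4.11}.4(b) (to drop the $v$-closure), the identity $\gcd(YZ) = \gcd(Y)\gcd(Z)$ in $\mathcal F(P)$ (immediate from the $\mathsf v_p$-formula), and the preceding step; Lemma~\ref{4.11}.3 then pins down the $v$-ideal. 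For the class-group statement, the special case $\varphi^*\bigl(\varphi(u)\bigr) = uH = \partial(u)$ (a direct unwinding of $\varphi^{-1}(\varphi(u)F) = uH$ using the divisor-homomorphism property) shows that the extension $\mathsf q(\varphi^*) \colon \mathsf q(F) \to \mathcal F_v(H)^\times$ maps $\mathsf q\bigl(\varphi(H)\bigr)$ onto $\mathsf q\bigl(\mathcal P(H)\bigr)$, inducing the claimed isomorphism $\overline\varphi$.

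For part 2, set $\Phi := (\varphi_2^*)^{-1} \circ \varphi_1^* \colon F_1 \to F_2$; this is an isomorphism and satisfies $\Phi \circ \varphi_1 = (\varphi_2^*)^{-1} \circ \partial = \varphi_2$. Uniqueness is forced by the divisor-theory axiom: each prime $p \in P_1$ satisfies $p = \gcd\bigl(\varphi_1(X)\bigr)$ for some finite $X \subset H$, so any $\Phi'$ with $\Phi' \circ \varphi_1 = \varphi_2$ must satisfy $\Phi'(p) = \gcd\bigl(\varphi_2(X)\bigr)$, and $\Phi$ is thus determined on the free basis $P_1$. The relation $\Phi \circ \varphi_1 = \varphi_2$ makes $\mathsf q(\Phi)$ identify $\mathsf q\bigl(\varphi_1(H)\bigr)$ with $\mathsf q\bigl(\varphi_2(H)\bigr)$, inducing $\overline\Phi$ with the stated formula. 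The main obstacle I anticipate is the homomorphism property of $\varphi^*$ in part 1: a direct set-theoretic proof of $\varphi^{-1}(abF) \subset (\varphi^{-1}(aF)\cdot\varphi^{-1}(bF))_v$ is not apparent, which is why the argument must proceed via the gcd computation and Lemma~\ref{4.11}.3.
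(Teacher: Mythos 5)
Your proposal is correct and follows essentially the same route as the paper: both arguments rest on Lemma \ref{4.11} (items 2--4) together with the gcd characterization $(*)$ from its proof, establish bijectivity of $\varphi^*$ through the two mutually inverse gcd computations ($a=\gcd\bigl(\varphi(\varphi^*(a))\bigr)$ and Lemma \ref{4.11}.3), derive the homomorphism property from multiplicativity of $\gcd$ on product sets in $\mathcal F(P)$, and obtain part 2 by composing $\varphi_2^{*-1}\circ\varphi_1^*$ and pinning down uniqueness on the gcd generators. The only variations are organizational and harmless: you package bijectivity as an explicit two-sided inverse $g=\gcd\circ\,\varphi$, you note that $\varphi^{-1}(aF)$ is already divisorial so the $v$-closure in the definition of $\varphi^*$ is superfluous, and you obtain $\overline\varphi$ by transporting $\mathsf q\bigl(\varphi(H)\bigr)$ onto $\mathsf q\bigl(\mathcal P(H)\bigr)$ under $\mathsf q(\varphi^*)$ instead of the paper's element-wise verification.
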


\begin{proof}
1. Note that $H$ is a normalizing Krull monoid by Theorem
\ref{4.13}. We start with the following assertion.

\begin{enumerate}
\item[{\bf A.}\,] $\{ \gcd \big( \varphi (X) \big) \mid \emptyset \ne X
\subset H \} = F$.
\end{enumerate}

{\it Proof of} \,{\bf A}.\, Since $\varphi \colon H \to \mathcal F
(P)$ is a divisor theory, it follows that $P \subset \big\{ \gcd
\big( \varphi (X) \big) \big| \emptyset \ne X \subset H \big\}$.
Since $\gcd \big( \varphi (X_1X_2) \big) = \gcd \big( \varphi (X_1)
\big) \gcd \big( \varphi (X_2) \big)$ for all non-empty subsets
$X_1, X_2 \subset H$, it follows that $\mathcal F (P) \subset \big\{
\gcd \big( \varphi (X) \big) \big| \emptyset \ne X \subset H \big\}
\subset \mathcal F (P)$.

Let $a \in F$. By {\bf A}, we have $a = \gcd \big( \varphi (X)
\big)$ for some non-empty subset $X \subset H$, and hence $\emptyset
\ne X \subset \varphi^{-1} (aF)$. This implies that $\varphi^{-1}
(aF)_v \in \mathcal I_v (H) \setminus \{\emptyset\} = \mathcal I_v^*
(H)$. By definition, we have $aF \cap \varphi (H) = \varphi \big(
\varphi^{-1} (aF) \big)$, and using Lemma \ref{4.11}.4 it follows
that
\[
a = \gcd \big(aF \cap \varphi (H) \big) = \gcd \big( \varphi (
\varphi^{-1}(aF)) \big) = \gcd \big( \varphi ( \varphi^{-1}(aF)_v)
\big) = \gcd \big( \varphi ( \varphi^* (a)) \big) \,,
\]
which shows that $\varphi^*$ is injective.

In order to show that $\varphi^*$ is surjective, let $\mathfrak a
\in \mathcal I_v^* (H)$ be given, and set $a = \gcd \big( \varphi
(\mathfrak a) \big)$. Then $\varphi^* (a) = \varphi^{-1} (aF)_v =
\mathfrak a$ by Lemma \ref{4.11}.3, and thus $\varphi^*$ is
surjective.

Next we  show that $\varphi^*$ is a homomorphism. Let $a,\, b \in
F$. Then Lemma \ref{3.6}.5 implies that
\[
\varphi^*(a) \cdv \varphi^* (b) = \bigl(
\varphi^{-1}(aF)_v\varphi^{-1} (bF)_v \bigr)_v = \bigl(
\varphi^{-1}(aF)\varphi^{-1} (bF) \bigr)_v \subset
\varphi^{-1}(abF)_v = \varphi^*(ab)\,.
\]
To prove the reverse inclusion, we set $c = \gcd \bigl ( \varphi
\bigl(\varphi^* (a)\cdv \varphi^*(b) \bigr)\bigr) \in F$, and note
that $\varphi^* (a) \cdv \varphi^* (b) \supset \varphi^{-1}(aF)
\varphi^{-1}(bF)$. This implies that
\[
c \t \gcd \Bigl( \varphi \bigl( \varphi^{-1}(aF) \varphi^{-1}(bF)
\bigr) \Bigr) = \gcd \bigl(aF \cap \varphi(H)\bigr) \gcd \bigl(bF
\cap \varphi(H) \bigr) = ab\,,
\]
hence $abF \subset cF$, and  thus $\varphi^* (ab) \subset
\varphi^{-1}(cF)_v = ( \varphi^*(a) \cdv \varphi^*(b))_v =
\varphi^*(a) \cdv \varphi^*(b)$, where the penultimate equation
follows from Lemma \ref{4.11}.3.

\smallskip
It remains to verify that $\overline \varphi$ is an isomorphism.
Note that for every $x \in H$, we have $\varphi ^* \circ \varphi(x)
= \varphi^{-1}\bigl( \varphi (x)F \bigr)_v = \mathsf q(\varphi)^{-1}
\bigl( \mathsf q (\varphi)(x) F \bigr)_v = xH$ by Lemma
\ref{4.11}.3. Obviously, $\varphi^*$ induces an epimorphism
$\varphi' \colon F \to \mathcal C_v (H)$, where $\varphi' (a) =
[\varphi^* (a)] \in \mathcal C_v (H)$. If $a,\, b \in F$ with
$[a]_\varphi = [b]_\varphi$, then there exist $x, \,y \in H$ such
that $\varphi(x)a = \varphi(y)b$. Since $[\varphi^*(a)] = [x
\varphi^*(a)] = \bigl[\varphi^* \bigl( \varphi(x)a \bigr)\bigr] =
\bigl[\varphi^* \bigl( \varphi(y)b \bigr)\bigr]= [y \varphi^*(b)]
=[\varphi^*(b)]$, it follows that $\varphi'$ induces an epimorphism
$\overline \varphi \colon \mathcal C (\varphi) \to \mathcal C_v
(H)$. To show that $\overline \varphi$ is injective, let $a, b \in
F$ with $[ \varphi^*(a) ] = [ \varphi^*(b)] \in \mathcal C_v(H)$.
Then there are $x,\, y \in H$ such that $x \varphi^*(a) = y
\varphi^*(b)$, hence $\varphi^*\bigl( \varphi(x)a \bigr) =
\varphi^*\bigl( \varphi(y)b \bigr)$,  thus $\varphi(x)a = \varphi(y)
b$, and therefore we get $[a]_\varphi = [b]_\varphi$.

\smallskip
2. For $i \in \{1,2\}$, let  $\varphi_i^* \colon F_i \to \mathcal
I_v^*(H)$ and $\overline \varphi_i \colon \mathcal C (\varphi_i) \to
\mathcal C_v(H)$ be the isomorphisms as defined in 1. Then $\Phi =
\varphi_2^{*-1} \circ \varphi_1^* \colon F_1 \to F_2$ and $\overline
\Phi = \overline \varphi_2^{-1} \circ \overline \varphi_1 \colon
\mathcal C(\varphi_1) \to \mathcal C(\varphi_2)$ are isomorphisms as
asserted.

Let $\psi \colon F_1 \to F_2$ be an arbitrary  isomorphism with the
property that $\psi \circ \varphi_1 = \varphi_2$. Then for every $a
\in F_1$ we have
\[
\psi(a) = \psi \bigl( \gcd \bigl( \varphi_1( \varphi_1^{-1}(aF_1))
\bigr) \bigr) = \gcd \bigl( \psi \circ \varphi_1 \bigl(
\varphi_1^{-1}(aF_1) \bigr) \bigr) = \gcd \bigl( \varphi_2 \bigl(
\varphi_1^{-1}(aF_1) \bigr) \bigr) \,,
\]
which shows that $\psi$ is uniquely determined.
\end{proof}

\medskip
Let $H$ be a Krull monoid and $\iota \colon \mathcal P^{\mathsf n}
(H) \hookrightarrow \mathcal I_v^* (H)$ be the inclusion map which
is a divisor homomorphism by Lemma \ref{4.7}.2. Then
\begin{equation}
\mathcal C^{\mathsf n} (H) = \mathcal C (\iota) \label{4.16}
\end{equation}
is called  the {\it normalizing class group} of $H$ (as studied by
Jespers and  Wauters, see \cite[page 332]{Je86a}). The
monomorphism $f \colon \mathcal I_v^* \big( \mathsf N (H) \big) \to
\mathcal I_v^* (H)$, discussed in Corollary \ref{4.14}, induces a
monomorphism
\[
\overline f \colon \mathcal C_v \big( \mathsf N (H) \big) = \mathcal
I_v^* \big( \mathsf N (H) \big)/ \mathcal P \big( \mathsf N (H)
\big) \to \mathcal C^{\mathsf n} (H) \,.
\]
In particular, if $H$ is normalizing and $\varphi \colon H \to D$ is
a divisor theory, then Proposition \ref{4.15} shows that
\begin{equation}
\mathcal C ( \varphi) \cong \mathcal C_v (H) = \mathcal C^{\mathsf
n} (H) \,, \label{4.17}
\end{equation}
and thus all concepts of class groups coincide.

\bigskip
\section{Examples of {K}rull monoids} \label{5}
\bigskip

In this section we provide a rough overview on the different places
where Krull monoids show up. We start with ring theory.

\smallskip
Let $R$ be a commutative integral domain. Then $R$ is a Krull domain
if and only if its multiplicative monoid of non-zero elements is a
Krull monoid. This was first proved independently by  Wauters
(\cite[Corollary 3.6]{Wa84a}) and  Krause (\cite{Kr89}). A
thorough treatment of this relationship and various generalizations
can be found in \cite[Chapters 22 and 23]{HK98} and \cite[Chapter
2]{Ge-HK06a}).  If $R$ is a Marot ring (this is a commutative ring
having not too many zero-divisors), then $R$ is a Krull ring if and
only if the monoid of regular elements is a Krull monoid
(\cite{HK93f}).

\smallskip
Next we consider the non-commutative setting.  A large number of concepts of
non-commutative Krull rings has been introduced (see \cite{Br73a,
Ma74a,Ma76a, Re77a, Re77b, Ma78a, Ch81a, Mi81a, Le83a, Wa-Je86a, Je-Wa86a, Je-Wa88a, Du91a}, and in particular  the survey article
\cite{Je86a}). Our definition of a Krull ring (given below) follows Jespers and  Okni{\'n}ski (\cite[page 56]{Je-Ok07a}). The following proposition summarizes the
relationship between the ideal theory of rings and the ideal theory
of the associated monoids of regular elements. This relationship was
first observed by  Wauters in \cite{Wa84a}. More detailed
references to the literature will be given after the proposition.
For clarity reasons, we carefully fix our setting for rings, and
then the proof of the proposition will be straightforward.

Let $R$ be a prime Goldie ring, and let $Q$ denote its classical
quotient ring (we follow the terminology of \cite{Mc-Ro01a} and
\cite{Go-Wa04a}; in particular, by a Goldie ring, we mean a  left
and right Goldie ring, and then the quotient ring is a left and
right quotient ring; an ideal is always a two-sided ideal). Then $Q$
is simple artinian, and every regular element of $Q$ is invertible.
Since $R$ is prime, every non-zero ideal $\mathfrak a \subset R$ is
essential, and hence it is generated as a left $R$-module (and also
as a right $R$-module) by its regular elements (see \cite[Corollary
3.3.7]{Mc-Ro01a}). By a fractional ideal $\mathfrak a$ of $R$ we
mean a left and right $R$-submodule of $Q$ for which there exist $a,
b \in Q^{\times}$ such that $a \mathfrak a \subset R$ and $\mathfrak
a b \subset R$. Clearly, every non-zero fractional ideal is generated by
regular elements. Let  $\mathfrak a$ be a fractional ideal. If
$\big(R \DPl (R \DPr \mathfrak a) \big) = \big(R \DPr (R \DPl
\mathfrak a) \big)$, then we set $\mathfrak a_v = \big(R \DPl (R
\DPr \mathfrak a) \big)$, and we say that $\mathfrak a$ is
divisorial if $\mathfrak a = \mathfrak a_v$. We denote by $\mathcal
F_v (R)$ the set of divisorial fractional ideals (fractional
$v$-ideals),  by $\mathcal I_v (R)$ the set of divisorial ideals of
$R$, and by $v$-$\spec (R)$ the set of divisorial prime ideals of
$R$. We say that $R$ is completely integrally closed if $(\mathfrak
a \DPl \mathfrak a) = (\mathfrak a \DPr \mathfrak a) = R$ for all
non-zero ideals $\mathfrak a$ of $R$. Suppose that $R$ is completely
integrally closed. Then left and right quotients coincide, and for
$\mathfrak a, \mathfrak b \in \mathcal F_v (R)$, we define
$v$-multiplication as $\mathfrak a \cdv \mathfrak b = (\mathfrak a
\mathfrak b)_v$. Equipped with $v$-multiplication, $\mathcal F_v
(R)$ is a semigroup, and $\mathcal I_v (R)$ is a subsemigroup. A prime Goldie ring is
said to be a {\it Krull ring} if it is completely integrally closed and satisfies
the ascending chain condition on divisorial ideals.

For a subset $I \subset Q$, we denote by $I^{\bullet} = I \cap
Q^{\times}$ the set of regular elements of $I$.  Then the set of all
regular elements  $H = R^{\bullet}$ of $R$ is a monoid, and $\mathsf
q (H) = Q^{\times}$ is a quotient group of $H$. Let $\mathfrak a,
\mathfrak b, \mathfrak c  $ be fractional ideals of $R$. Since
$\mathfrak c$ is generated (as a  left $R$-module and also as a
right $R$-module) by the regular elements, we have $\mathfrak c = \
_R\langle \mathfrak c^{\bullet} \rangle = \langle \mathfrak
c^{\bullet} \rangle_R$, and thus also
\[
(\mathfrak b \DPl \mathfrak a)^{\bullet} = (  \mathfrak b^{\bullet}
\DPl \mathfrak a^{\bullet} ) \quad \text{and} \quad  (\mathfrak b
\DPr \mathfrak a )^{\bullet}  =   (\mathfrak b^{\bullet} \DPr
\mathfrak a^{\bullet})  \,.
\]

\medskip
\begin{proposition} \label{5.1}
Let $R$ be a prime Goldie ring, and let $H$ be the monoid of regular
elements of $R$.
\begin{enumerate}
\item $R$ is completely integrally closed if and only if $H$ is
completely integrally closed.

\smallskip
\item The maps
      \[
      \iota^\bullet \colon
      \begin{cases}
      \mathcal F_v(R) &\to \  \mathcal F_v(H)\\
      \quad \mathfrak a &\mapsto \quad \mathfrak a^{\bullet}
      \end{cases}
      \qquad \text{and} \qquad \iota^\circ \colon
      \begin{cases}
      \mathcal F_v(H) &\to \  \mathcal F_v(R)\\
      \quad \mathfrak a &\mapsto \quad \langle \mathfrak a \rangle_R
      \end{cases}
      \]
      are inclusion preserving  isomorphisms which are inverse to each
      other. Furthermore,
      \begin{enumerate}
      \smallskip
      \item $\iota^{\bullet} \mid \mathcal I_v (R)  \colon \mathcal I_v (R)  \to \mathcal I_v (H)$ and
            $\iota^{\bullet} \mid v$-$\spec (R) \colon v$-$\spec (R) \to v$-$\spec
            (H)$ are bijections.

      \smallskip
      \item $R$ satisfies the ascending chain condition on  divisorial ideals of $R$ if
            and only if $H$ satisfies the ascending
            chain condition on divisorial ideals of $H$.
      \end{enumerate}

\smallskip
\item  $R$ is a Krull ring if and only if $H$ is a Krull monoid, and
      if this holds, then $\mathsf N (H)$ is a
      normalizing Krull monoid.
\end{enumerate}
\end{proposition}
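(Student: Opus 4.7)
The plan is to transfer ideal-theoretic properties between $R$ and $H$ using the two tools stated just before the proposition: the generation identity $\mathfrak{c} = \langle \mathfrak{c}^\bullet \rangle_R$ for every non-zero fractional $R$-ideal, and the compatibility $(\mathfrak{b} \DPl \mathfrak{a})^\bullet = (\mathfrak{b}^\bullet \DPl \mathfrak{a}^\bullet)$ together with its right-hand analogue. A key preliminary observation is that non-zero ideals of $R$ correspond bijectively with non-empty ideals of $H$ via $\mathfrak{a} \mapsto \mathfrak{a}^\bullet$ and $\mathfrak{b} \mapsto \langle \mathfrak{b} \rangle_R$, since every non-zero ideal of the prime Goldie ring $R$ is essential and hence generated as a one-sided $R$-module by its regular elements.

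For part 1, I would apply the quotient compatibility to obtain $(\mathfrak{a} \DPl \mathfrak{a})^\bullet = (\mathfrak{a}^\bullet \DPl \mathfrak{a}^\bullet)$ and its right version. Hence $(\mathfrak{a} \DPl \mathfrak{a}) = R$ if and only if $(\mathfrak{a}^\bullet \DPl \mathfrak{a}^\bullet) = R^\bullet = H$, and the preliminary correspondence translates the complete integral closure condition on $R$ into condition (c) of Lemma \ref{3.10} for $H$, and vice versa.

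For part 2, the central computation is that iterating the quotient compatibility yields $\big(H \DPl (H \DPr \mathfrak{a}^\bullet)\big) = \big(R \DPl (R \DPr \mathfrak{a})\big)^\bullet$ (and analogously on the right). Consequently, if $\mathfrak{a} \in \mathcal{F}_v(R)$ then $(\mathfrak{a}^\bullet)_v = \mathfrak{a}_v^\bullet = \mathfrak{a}^\bullet$, showing that $\iota^\bullet$ lands in $\mathcal{F}_v(H)$; the reverse computation shows $\iota^\circ$ lands in $\mathcal{F}_v(R)$. The generation identity directly gives $\iota^\circ \circ \iota^\bullet = \mathrm{id}$, and $\iota^\bullet \circ \iota^\circ = \mathrm{id}$ follows because any element of $\mathfrak{a} \in \mathcal{F}_v(H)$ is already a regular element of $Q$, so $\langle \mathfrak{a} \rangle_R^\bullet = \mathfrak{a}$. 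Both maps are obviously inclusion preserving. For (2a), the bijection restricts to $\mathcal{I}_v(R) \leftrightarrow \mathcal{I}_v(H)$ by tracking inclusion in $R$ or $H$; for primes, I would apply Lemma \ref{3.7}(d) on the monoid side and its ring-theoretic analogue on the ring side, which translate into each other because $aHb \subset \mathfrak{a}^\bullet$ is equivalent to $aRb \subset \mathfrak{a}$ via the generation identity. Statement (2b) is then immediate from the inclusion-preserving bijection of (2).

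Part 3 follows by combining 1 and (2b) to obtain the equivalence between $R$ being a Krull ring and $H$ being a Krull monoid; the final assertion that $\mathsf{N}(H)$ is a normalizing Krull monoid whenever $H$ is a Krull monoid is precisely Corollary \ref{4.14}.1. The main obstacle is the careful bookkeeping of left--right symmetry throughout, especially when verifying that the ring-theoretic definition $\mathfrak{a}_v = \big(R \DPl (R \DPr \mathfrak{a})\big) = \big(R \DPr (R \DPl \mathfrak{a})\big)$ harmonizes with its monoid-side counterpart under the $\bullet$ operation; once the quotient compatibility is established on both sides, the rest is a diagram chase.
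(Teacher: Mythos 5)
Your strategy coincides with the paper's: everything is transported through the generation identity $\mathfrak c=\langle\mathfrak c^{\bullet}\rangle_R$ and the compatibility $(\mathfrak b\DPl\mathfrak a)^{\bullet}=(\mathfrak b^{\bullet}\DPl\mathfrak a^{\bullet})$, and your treatment of 2(b) and of part 3 is exactly the paper's. There are, however, genuine gaps. The most visible: the proposition asserts that $\iota^{\bullet}$ and $\iota^{\circ}$ are \emph{isomorphisms}, i.e.\ compatible with $v$-multiplication, and you never address multiplicativity. In the paper this is the longest portion of the proof of item 2: one first establishes $\mathfrak c_{v_R}\cap Q^{\times}=C_{v_H}$ for an ideal $C\subset\mathfrak c^{\bullet}$ of $H$ with $\langle C\rangle_R=\mathfrak c$, and then applies this to $C=\mathfrak a^{\bullet}\mathfrak b^{\bullet}$ to obtain $\iota^{\bullet}(\mathfrak a\cdot_{v_R}\mathfrak b)=\iota^{\bullet}(\mathfrak a)\cdot_{v_H}\iota^{\bullet}(\mathfrak b)$. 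This is not a formality and must be supplied.

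A second cluster of problems stems from one faulty inclusion. Your preliminary ``bijective correspondence between non-zero ideals of $R$ and non-empty ideals of $H$'' requires $\langle\mathfrak b\rangle_R\cap Q^{\times}\subset\mathfrak b$ for an \emph{arbitrary} ideal $\mathfrak b$ of $H$, and your justification of $\iota^{\bullet}\circ\iota^{\circ}=\mathrm{id}$ (``every element of $\mathfrak a$ is already regular'') only yields the reverse inclusion $\mathfrak a\subset\langle\mathfrak a\rangle_R\cap Q^{\times}$: a \emph{sum} of elements of $\mathfrak b$ may be a regular element lying outside $\mathfrak b$. For divisorial $\mathfrak a=\bigcap Hc$ the inclusion can be rescued (since $\langle Hc\rangle_R=Rc$ and $Rc\cap Q^{\times}=Hc$), but for general ideals it is unjustified, and the paper never uses it: in part 1 it replaces the appeal to a correspondence by the direct sandwich $H\subset(\mathfrak a\DPl\mathfrak a)\subset(A\DPl A)^{\bullet}=H$ with $A=\langle\mathfrak a\rangle_R$ (note that $(X\DPl X)$ is not monotone in $X$, so knowing only $\mathfrak a\subset A^{\bullet}$ does not let you compare $(\mathfrak a\DPl\mathfrak a)$ with $(A^{\bullet}\DPl A^{\bullet})$), and in the prime-spectrum argument it uses the ideal-product criterion of Lemma \ref{3.7}(a) on generated ideals, via $\langle\mathfrak a\rangle_R\langle\mathfrak b\rangle_R=\langle\mathfrak a\mathfrak b\rangle_R\subset\langle\mathfrak p^{\bullet}\rangle_R=\mathfrak p$. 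Your element-wise criterion ``$aHb\subset\mathfrak a^{\bullet}$ iff $aRb\subset\mathfrak a$'' has the same defect: the forward implication would need every element of $R$ to be a sum of regular elements, which the generation identity does not provide. Repairing these points brings your outline in line with the paper's proof.
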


\begin{proof}
1. Suppose that $H$ is completely integrally closed, and let
$\mathfrak a \subset R$ be a non-zero ideal. Then $\mathfrak
a^{\bullet} \subset H$ is an ideal,  $(\mathfrak a^{\bullet} \DPl
\mathfrak a^{\bullet})  = H$ by Lemma \ref{3.10} and hence
\[
(\mathfrak a \DPl \mathfrak a) = \ _R\langle (\mathfrak a \DPl
\mathfrak a)^{\bullet} \rangle = \ _R\langle ( \mathfrak a^{\bullet}
\DPl  \mathfrak a^{\bullet}) \rangle = \ _R\langle H \rangle = R \,.
\]
Similarly, we get $(\mathfrak a \DPr \mathfrak a) = R$.

Conversely, suppose that $R$ is completely integrally closed, and
let $\mathfrak a \subset H$ be a non-empty ideal. If $A \subset R$ denotes
the  ideal generated by $\mathfrak a$, then
\[
H \subset ( \mathfrak a \DPl \mathfrak a) \subset  (A \DPl
A)^{\bullet} = R^{\bullet} = H \,.
\]
Similarly, we get $(\mathfrak a \DPr \mathfrak a) = H$.

\smallskip
2. Clearly, $\iota^{\bullet}$ and $\iota^{\circ}$ are inclusion
preserving and map fractional ideals to fractional ideals. If
$\mathfrak a \in \mathcal F_v (R)$, then
\[
\big( H \DPl (H \DPr \mathfrak a^{\bullet}) \big) = ( R^{\bullet}
\DPl (R \DPr \mathfrak a)^{\bullet} \big) = \big( R \DPl (R \DPr
\mathfrak a) \big)^{\bullet} = \mathfrak a^{\bullet} = \big( R \DPr
(R \DPl \mathfrak a) \big)^{\bullet} = \big( H \DPr (H \DPl
\mathfrak a^{\bullet}) \big) \,,
\]
and hence $\mathfrak a^{\bullet}$ is a divisorial fractional ideal
of $H$. Similarly, we obtain that $\iota^{\circ} \big( \mathcal F_v
(H) \big) \subset \mathcal F_v (R)$. If $\mathfrak a \in \mathcal
F_v (R)$, then
\[
\iota^{\circ} \circ \iota^{\bullet} ( \mathfrak a) = \langle
\mathfrak a \cap Q^{\times} \rangle_R = \mathfrak a \,,
\]
and, if $\mathfrak a \in \mathcal F_v (H)$, then
\[
\iota^{\bullet} \circ \iota^{\circ} ( \mathfrak a) = \langle
\mathfrak a \rangle_R \cap Q^{\times} = \mathfrak a \,.
\]
Thus $\iota^{\bullet}$ and $\iota^{\circ}$ are inverse to each
other, and it remains to show that $\iota^{\bullet}$ is a
homomorphism.

Let $\mathfrak a, \mathfrak b, \mathfrak c \in \mathcal F_v (R)$. In
the next few calculations, we write---for clarity
reasons---$\mathfrak a \cdot_R \mathfrak b$ for the ring theoretical
product, $\mathfrak a \cdot_S \mathfrak b$ for the semigroup
theoretical product, $v_R$ for the $v$-operation on $R$ and $v_H$
for the $v$-operation on $H$. If $C \subset \mathfrak c^{\bullet}
\cap H$ is an ideal of $H$ such that $\langle C \rangle_R =
\mathfrak c$, then $(R \DPr \mathfrak c)^{\bullet} = (H \DPr C)$,
and hence
\[
\mathfrak c_{v_R} \cap Q^{\times} = \big( R \DPl (R \DPr \langle C
\rangle) \big)^{\bullet} = (R^{\bullet} \DPl (R \DPr \langle C
\rangle)^{\bullet} \big) = \big( H \DPl (H \DPr C) \big) = C_{v_H}
\,.
\]
Applying this relationship to $C = (\mathfrak a \cap
Q^{\times})\cdot_S(\mathfrak b \cap Q^{\times})$ we obtain that
\[
\begin{aligned}
\iota^{\bullet} ( \mathfrak a \cdot_{v_R} \mathfrak b) & = (
\mathfrak a \cdot_R \mathfrak b)_{v_R} \cap Q^{\times}  = \big(
\langle \mathfrak a \cdot_S \mathfrak b \rangle_R \big)_{v_R} \cap
Q^{\times}  = \big( \langle (\mathfrak a \cap Q^{\times}) \cdot_S
(\mathfrak b \cap
Q^{\times}) \rangle_R \big)_{v_R} \cap Q^{\times} \\
& = \big( (\mathfrak a \cap Q^{\times})\cdot_S(\mathfrak b \cap
Q^{\times}) \big)_{v_H}  = \iota^{\bullet}(\mathfrak a) \cdot_{v_H}
\iota^{\bullet} (\mathfrak b) \,.
\end{aligned}
\]

\smallskip
2.(a) It is clear that the restriction $\iota^{\bullet} \mid
\mathcal I_v (R)  \colon \mathcal I_v (R)  \to \mathcal I_v (H)$ is
bijective. We verify that $\iota^{\bullet} \mid v$-$\spec (R) \colon
v$-$\spec (R) \to v$-$\spec (H)$ is bijective. Indeed, if $\mathfrak
p \in v$-$\spec (R)$ and $\mathfrak a, \mathfrak b \in \mathcal I_s
(H)$ such that $\mathfrak a \mathfrak b \subset \mathfrak
p^{\bullet}$, then $\langle \mathfrak a \rangle_R \langle \mathfrak
b \rangle_R = \langle \mathfrak a \mathfrak b \rangle_R \subset
\mathfrak p$, whence $\langle \mathfrak a \rangle_R \subset
\mathfrak p$ or $\langle \mathfrak b \rangle_R \subset \mathfrak p$
and thus $\mathfrak a^{\bullet} \subset \mathfrak p^{\bullet}$ or
$\mathfrak b^{\bullet} \subset \mathfrak p^{\bullet}$. Therefore
$\mathfrak p^{\bullet}$ is a prime ideal by Lemma \ref{3.7}.(a), and
hence $\mathfrak p^{\bullet} \in  s$-$\spec (H) \cap \mathcal I_v
(H) = v$-$\spec (H)$. Conversely, suppose that $\mathfrak p \in
\mathcal I_v (R)$ such that $\mathfrak p^{\bullet} \in v$-$\spec
(H)$. In order to show that $\mathfrak p \subset R$ is a prime
ideal, let $\mathfrak a, \mathfrak b \subset R$ be ideals such that
$\mathfrak a \mathfrak b \subset \mathfrak p$. Then $\mathfrak
a^{\bullet} \mathfrak b^{\bullet} \subset (\mathfrak a \mathfrak
b)^{\bullet} \subset \mathfrak p^{\bullet}$, and thus $\mathfrak
a^{\bullet} \subset \mathfrak p^{\bullet}$ or $\mathfrak b^{\bullet}
\subset \mathfrak p^{\bullet}$, which implies that $\mathfrak a
\subset \mathfrak p$ or $\mathfrak b \subset \mathfrak p$.

\smallskip
2.(b) Since the restriction of $\iota^{\bullet}$ to $\mathcal I_v
(R)$ and the restriction of  $\iota^{\circ}$ to $\mathcal I_v (H)$
are both inclusion preserving and bijective, this follows
immediately.

\smallskip
3. The equivalence follows immediately from 1. and 2.(b). Moreover,
if $H$ is a Krull monoid, then $\mathsf N (H)$ is a normalizing
Krull monoid by Corollary \ref{4.14}.
\end{proof}

\smallskip
Suppose that  $R$ is a prime P.I.-ring. Then $R$ is a Krull ring
if and only if $R$ is a Chamarie-Krull ring (\cite[Proposition
3.5]{Wa84a}), and moreover the notions of $\Omega$-Krull rings,
central $\Omega$-Krull rings, Krull rings in the sense of
Marubayashi, in the sense of Chamarie and others coincide
(\cite[Theorem 2.4]{Je86a}). Classical orders in central simple
algebras over Dedekind domains are Asano prime rings (\cite[Theorem
5.3.16]{Mc-Ro01a}), and if $R$ is an Asano prime ring (in other
words, an Asano order), then $R$ is a Krull ring
(\cite[Proposition 5.2.6]{Mc-Ro01a}).
Moreover, if $R$ is a maximal order in a central simple algebra over a Dedekind domain with finite class group, then the central class group and hence the normalizing class group of $R$ are finite
(for more general results see \cite[Corollary 37.32]{R03}, \cite[Proposition 8.1]{Je86a}, \cite[Chapter E, Proposition 2.3]{Va-Ve84}). Krull rings, in which every element is normalizing, are discussed in \cite{Le-Oy86, Wa-Je86a}.
Further results and examples of
non-commutative Dedekind and Krull rings may be found in \cite{Ak-Bu11a, Wa-Je86a}.

\smallskip
If a monoid $H$ is normalizing, then every non-unit $a \in H$ is
contained in the divisorial ideal $aH \ne H$. But this does not hold
in general.  We provide the announced example of a Krull monoid $H$
having an element $a \in H \setminus H^{\times}$ which is not
contained in a divisorial ideal distinct from $H$ (we thank Daniel
Smertnig for his assistance).

\medskip
\begin{example} \label{5.2}
Let $R$ be a commutative principal ideal domain with quotient field $K$ and $n \in \mathbb N$. Then $M_n (R)$ is a classical order in the central simple algebra $M_n (K)$ and hence an Asano prime ring. By Proposition \ref{5.1}, $H = M_n (R)^{\bullet} = M_n (R) \cap \text{\rm GL}_n (K)$ is a Krull monoid with quotient group $\text{\rm GL}_n (K)$. Since every ideal of $M_n (R)$ is divisorial (\cite[Proposition 5.2.6]{Mc-Ro01a}),  we get
\[
\mathcal I_v (R) = \{ M_n (aR) \mid a \in R \} \,.
\]
Again by Proposition \ref{5.1}, this implies that
\[
\mathcal I_v (H) = \{ M_n (aR)^{\bullet} \mid a \in R \} \,,
\]
where
\[
M_n (aR)^{\bullet} = \{ C = (c_{i,j})_{1 \le i,j \le n} \mid c_{i,j} \in aR \ \text{for all} \ i, j \in [1,n] \ \text{and} \ \det (C) \ne 0 \} \,.
\]
Thus, if $C \in M_n (R)$ with $\text{\rm GCD} (\{c_{i,j} \mid i, j \in [1,n] \}) = R^{\times}$ and $\det (C) \ne 0$, then $(HCH)_v = H$.
\end{example}

\smallskip
We end this section with some more examples of Krull monoids.
Apart from their appearance as monoids of regular elements in Krull
rings, they occur in various other circumstances. We offer
a brief overview:
\begin{itemize}
\item Regular congruence monoids in Krull domains are Krull monoids
 (\cite[Proposition 2.11.6]{Ge-HK06a}).

\smallskip
\item Module Theory: Let $R$ be a
ring and $\mathcal C$  a class of right (or left)
$R$-modules---closed under finite direct sums, direct summands and
isomorphisms---such that $\mathcal C$ has a set $V ( \mathcal C)$ of
representatives (that is, every module $M \in \mathcal C$ is
isomorphic to a unique $[M] \in V( \mathcal C))$.  Then $V (
\mathcal C)$ becomes a commutative semigroup under the operation
$[M] + [N] = [M \oplus N]$, which carries detailed information about
the direct-sum behavior of modules in $\mathcal C$.  If every
$R$-module $M \in \mathcal C$ has a semilocal endomorphism ring,
then $\mathcal V (C)$ is a Krull monoid (see \cite{Fa02}, and
\cite{Fa06a} for a survey).

\smallskip
\item Diophantine monoids: A Diophantine monoid is a monoid which
consists of the set of solutions in nonnegative integers to a system
of linear Diophantine equations (see \cite[Proposition
4.3]{Ch-Kr-Oe02} and \cite[Theorem 2.7.14]{Ge-HK06a}).

\smallskip
\item Monoids of zero-sum sequences over abelian groups.
\end{itemize}

Since monoids of zero-sum sequences will be needed in the next
section, we discuss them in greater detail. Let $G$ be an additively
written abelian group and $G_0 \subset G$ a subset. The elements of
the free abelian monoid $\mathcal F(G_0)$ over $G_0$ are called \
{\it sequences over \ $G_0$}. Thus a sequence $S \in \mathcal
F(G_0)$ will be written in the form
\[
S = g_1 \cdot \ldots \cdot g_l = \prod_{g \in G_0} g^{\mathsf v_g
(S)}\,,
\]
and we use all notions (such as the length) as in general free
abelian monoids (see Section \ref{2}). Furthermore,  we denote by  \
$\sigma (S) = g_1+ \ldots + g_l$ \ the \ {\it sum} \ of $S$, and
\[
\mathcal B(G_0) = \{ S \in \mathcal F(G_0) \mid \sigma (S) =0\}
\]
is called the {\it monoid of zero-sum sequences} \ over \ $G_0$.
Clearly, $\mathcal B (G_0) \subset \mathcal F (G_0)$ is a saturated
submonoid, and hence it is a Krull monoid by Theorem \ref{4.13}.(b).
In Theorem \ref{6.5} we will outline the relationship between a
general Krull monoid and an associated monoid of zero-sum sequences.
An element $S = g_1 \cdot \ldots \cdot g_l$ is an atom in $\mathcal
B (G_0)$ if and only if it is a minimal zero-sum sequence (that is,
$\sigma (S) = 0$ but $\sum_{i \in I} g_i \ne 0$ for all $\emptyset
\ne I \subsetneq [1,l]$). The {\it Davenport constant}
\[
\mathsf  D (G_0) = \sup \bigl\{ |U| \, \bigm| \; U \in \mathcal A
\big( \mathcal B (G_0) \big) \bigr\} \in \N_0 \cup \{\infty\} \,,
\]
of $G_0$  is a central invariant in zero-sum theory (see
\cite{Ga-Ge06b}), and for its relevance in factorization theory we
refer to \cite{ Ge09a}. For a finite set $G_0$ we have $\mathsf D
(G_0) < \infty$ (see \cite[Theorem 3.4.2]{Ge-HK06a}).

\bigskip
\section{Arithmetic of Krull monoids} \label{6}
\bigskip

The theory of non-unique factorizations (in commutative monoids and
domains) has its origin in algebraic number theory, and in the last
two decades it emerged as an independent branch of algebra and
number theory (see \cite{An97,Ch-Gl00,Ch05a,Ge-HK06b, Ge-HK06a} for
some recent surveys and conference proceedings). Its main objective
is to describe the non-uniqueness of factorizations by arithmetical
invariants (such as sets of lengths,  defined below), and to study
the relationship between these arithmetical parameters and classical
algebraic parameters (such as class groups) of the rings under
investigation. Transfer homomorphisms play a crucial role in this
theory. They allow to shift problems from the original objects of
interest to auxiliary monoids, which are easier to handle; then one
has to settle the problems in the auxiliary monoids and shift the
answer back to the initial monoids or domains. This machinery is
best established---but not restricted to---in the case of
commutative Krull monoids, and it allows to employ methods from
additive and combinatorial number theory (\cite{Ge09a}).

In this section, we first show that the concept of a transfer
homomorphism carries over to the non-commutative setting in perfect
analogy. Then we give a criterion for  a Krull monoid to be a
bounded factorization monoid, and show that, if a Krull monoid
admits a divisor homomorphism with finite Davenport constant, then
all the arithmetical invariants under consideration are finite too
(Theorem \ref{6.5}). In order to do so we need all the ideal and
divisor theoretic tools developed in Sections \ref{3} and \ref{4}.

\smallskip
Let $H$ be a monoid. If $a \in H$ and $a = u_1 \cdot \ldots \cdot
u_k$, where $k \in \mathbb N$ and $u_1, \ldots, u_k \in \mathcal A
(H)$, then we say that $k$ is the {\it length} of the factorization.
For $a \in H \setminus H^{\times}$, we call
\[
\mathsf L_H (a) = \mathsf L (a) = \{ k \in \mathbb N \mid a \
\text{has a factorization of length} \ k \} \subset \N
\]
the {\it set of lengths} of $a$. For convenience, we set $\mathsf L
(a) = \{0\}$ for all $a \in H^{\times}$. By definition, $H$ is
atomic if and only if $\mathsf L (a) \ne \emptyset$ for all $a \in
H$. We say that $H$ is a \BF-{\it monoid} (or a bounded
factorization monoid) if $\mathsf L (a)$ is finite and non-empty for
all $a \in H$. We call
\[
\mathcal L (H) = \{ \mathsf L (a) \mid a \in H \}
\]
the {\it system of sets of lengths} of $H$. So if $H$ is a
\BF-monoid, then $\mathcal L (H)$ is a set of finite non-empty
subsets of the non-negative integers.

We recall some invariants describing the arithmetic of \BF-monoids.
Let $H$ be a \BF-monoid. If   $L = \{l_1, \ldots, l_t\} \subset
\mathbb N$, where $t \in \mathbb N$ and $l_1 < \ldots < l_t$, is a
finite non-empty subset of the positive integers,  then
\begin{itemize}
\smallskip
\item $\rho (L) = \frac{\max L}{\min L} \in \mathbb Q_{\ge 1}$ \ is
called the {\it elasticity} of $L$, and

\smallskip
\item $\Delta (L) = \{l_i - l_{i-1} \mid i \in [2, t] \}$ \ is called
the {\it set of distances} of $L$.
\end{itemize}
For convenience, we set $\rho ( \{0\}) = 1$ and $\Delta ( \{0\}) =
\emptyset$. We call
\begin{itemize}
\smallskip
\item $\rho (H) = \sup \{ \rho (L) \mid L \in \mathcal L (H) \} \in
      \mathbb R_{\ge 1} \cup \{\infty\}$ \ the {\it elasticity} of $H$, and

\smallskip
\item $\Delta (H) = \bigcup_{L \in \mathcal L (H)} \Delta (L) \
      \subset \mathbb N$ \ the {\it set of distances} of $H$.
\end{itemize}

Clearly, we have $\rho (H) = 1$ if and only if $\Delta (H) =
\emptyset$. Suppose that $\Delta (H) \ne \emptyset$, in other words
that there is some $L \in \mathcal L (H)$ such that $|L| \ge 2$.
Then there exists some $a \in H$ such that $a = u_1 \cdot \ldots
\cdot u_k = v_1 \cdot \ldots \cdot v_l$ where $1 < k < l$ and $u_1,
\ldots, u_k, v_1, \ldots, v_l \in \mathcal A (H)$. Then for every $n
\in \mathbb N$, we have
\[
a^n = (u_1 \cdot \ldots \cdot u_k)^{\nu}(v_1 \cdot \ldots \cdot
v_l)^{n - \nu} \quad \text{for all} \quad \nu \in [0, n]
\]
and hence $\{ln - \nu(l-k) \mid \nu \subset [0,n] \} \subset \mathsf
L (a^n)$. Therefore sets of lengths get arbitrarily large. We will
see that--under suitable algebraic finiteness conditions--sets of
lengths are well-structured. In order to describe their structure we
need the notion of almost arithmetical progressions.

Let $d \in \N$, \ $M \in \N_0$ \ and \ $\{0,d\} \subset \mathcal D
\subset [0,d]$. A subset $L \subset \Z$ is called an {\it almost
arithmetical multiprogression} \ ({\rm AAMP} \ for
      short) \ with \ {\it difference} \ $d$, \ {\it period} \ $\mathcal D$,
      \  and \ {\it bound} \ $M$, \ if
\[
L = y + (L' \cup L^* \cup L'') \, \subset \, y + \mathcal D + d \Z
\]
where \ $y \in \mathbb Z$ is a shift parameter,
\begin{itemize}
\item  $L^*$ is finite nonempty with $\min L^* = 0$ and $L^* =
       (\mathcal D + d \Z) \cap [0, \max L^*]$ and

\item  $L' \subset [-M, -1]$ \ and \ $L'' \subset \max L^* + [1,M]$
\end{itemize}

\smallskip
We say that \ {\it the Structure Theorem for Sets of Lengths} holds
for the monoid $H$ \ if $H$ is atomic and there exist some $M^* \in
\N_0$ and a finite nonempty set $\Delta^* \subset \N$ such that
every $L \in \mathcal L(H)$ is an {\rm AAMP} with some difference $d
\in \Delta^*$ and bound $M^*$ (in this case we say more precisely,
that the Structure Theorem holds with parameters $M^*$ and
$\Delta^*$).

\medskip
We start with a characterization of \BF-monoids, and for that we
need the notion of length functions. A function $\lambda \colon H
\to \mathbb N_0$ is called a {\it length function} if $\lambda (a) <
\lambda (b)$ for all $b \in (aH \cup Ha) \setminus (aH^{\times} \cup
H^{\times}a)$.

\medskip
\begin{lemma} \label{6.1}
Let $H$ be a monoid and $\mathfrak m = H \setminus H^{\times}$. Then
the following statements are equivalent{\rm \,:}
\begin{enumerate}
\item[(a)] $H$ is a \BF-monoid.

\smallskip
\item[(b)] $\bigcap_{n \ge 0} \mathfrak m^n = \emptyset$.

\smallskip
\item[(c)] There exists a length function $\lambda \colon H \to \mathbb
           N_0$.
\end{enumerate}
\end{lemma}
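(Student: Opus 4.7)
The plan is to prove the cyclic implications (a)$\Rightarrow$(c)$\Rightarrow$(b)$\Rightarrow$(a).

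For (a)$\Rightarrow$(c), I define $\lambda(a)=\max\mathsf L(a)$ when $a\in H\setminus H^\times$ and $\lambda(a)=0$ when $a\in H^\times$; the \BF-hypothesis ensures $\lambda$ takes values in $\N_0$. To verify the length function condition, fix $b\in(aH\cup Ha)\setminus(aH^\times\cup H^\times a)$; by the left--right symmetry of the hypothesis I may write $b=ac$ with $c\in H$, and the exclusion $b\notin aH^\times$ combined with cancellativity forces $c\in\mathfrak m$. Since a \BF-monoid is atomic, $c$ has an atomic factorization of length at least one, which appended to a longest atomic factorization of $a$ yields a factorization of $b$ strictly longer than $\lambda(a)$; the case $a\in H^\times$ is trivial because then $\lambda(a)=0<1\le\lambda(b)$.

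For (c)$\Rightarrow$(b), I argue by contradiction: suppose $\lambda$ is a length function and that some $a\in\bigcap_{n\ge 0}\mathfrak m^n$ exists. For any $n\in\N$ I write $a=c_1\cdots c_n$ with each $c_i\in\mathfrak m$, and form the left partial products $d_k=c_1\cdots c_k$. Each step gives $d_{k+1}=d_kc_{k+1}\in d_kH$, and cancellativity together with $c_{k+1}\notin H^\times$ shows $d_{k+1}\notin d_kH^\times$; the complementary two-sided exclusion $d_{k+1}\notin H^\times d_k$---the delicate non-commutative point---is handled by analyzing the identity $d_kc_{k+1}=\varepsilon d_k$ in $\mathsf q(H)$ and, if it holds, absorbing the unit into an adjacent factor to re-express $a$ as a product of the same length for which the offending step is two-sided strict. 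The length function property then gives $\lambda(d_0)<\lambda(d_1)<\cdots<\lambda(d_n)=\lambda(a)$, so $\lambda(a)\ge n$ for every $n$, contradicting $\lambda(a)\in\N_0$.

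For (b)$\Rightarrow$(a), I first establish atomicity by contradiction. If some non-unit $a\in H$ is not a product of atoms, then $a$ is not itself an atom, so $a=b_1c_1$ with $b_1,c_1\in\mathfrak m$, and at least one factor, say $b_1$, still has no atomic factorization. Iterating, for every $n$ I obtain a factorization $a=b_n c_n c_{n-1}\cdots c_1$ with $n+1$ factors in $\mathfrak m$, so $a\in\mathfrak m^{n+1}$ for every $n$, contradicting (b). Boundedness of $\mathsf L(a)$ then follows immediately: atoms are non-units, so any atomic factorization of length $k$ places $a\in\mathfrak m^k$, whence an unbounded $\mathsf L(a)$ would again force $a\in\bigcap_n\mathfrak m^n=\emptyset$.

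The main obstacle is the two-sided exclusion $d_{k+1}\notin H^\times d_k$ in the chain argument of (c)$\Rightarrow$(b); this is precisely the place where the non-commutative setting differs from the commutative analogue, as the cancellativity argument that handles $d_{k+1}\notin d_kH^\times$ does not suffice on the other side. The remaining two implications transcribe the commutative proofs, with left and right divisibility treated symmetrically throughout.
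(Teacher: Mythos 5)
Your route (a)$\Rightarrow$(c)$\Rightarrow$(b)$\Rightarrow$(a) is genuinely different from the paper's (a)$\Rightarrow$(b)$\Rightarrow$(c)$\Rightarrow$(a): you take $\lambda(a)=\max\mathsf L(a)$ where the paper takes $\lambda(a)=\max\{n\in\N_0\mid a\in\mathfrak m^n\}$, and you obtain atomicity in (b)$\Rightarrow$(a) by the splitting/descent argument rather than extracting it from a length function. Your (a)$\Rightarrow$(c) is correct, and so is (b)$\Rightarrow$(a) (the inclusion $\mathfrak m^k\subset\mathfrak m^j$ for $k\ge j$ that you need holds because a product of two non-units is a non-unit in a cancellative Ore monoid).

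The gap sits exactly where you place it, in (c)$\Rightarrow$(b), and the proposed repair does not work. If $d_kc_{k+1}=\varepsilon d_k$ with $\varepsilon\in H^{\times}$, then absorbing the unit yields $a=(\varepsilon c_1)c_2\cdots c_kc_{k+2}\cdots c_n$, a product of $n-1$ elements of $\mathfrak m$, not $n$: the factor $c_{k+1}=d_k^{-1}\varepsilon d_k$ simply disappears from the product. Moreover $d_{k+1}=\varepsilon d_k$ is an identity between elements, independent of how $a$ is factored, so no re-expression of $a$ ``of the same length'' can make that step two-sided strict; and starting from a longer factorization of $a$ does not help, since you have no control over how many steps collapse. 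Two remarks. First, the paper itself never confronts this point: in its proof of (c)$\Rightarrow$(a) it verifies only $a_{i+1}\in a_iH\setminus a_iH^{\times}$ and concludes $\lambda(a_{i+1})>\lambda(a_i)$, i.e.\ it uses the length-function condition one-sidedly, requiring $\lambda(a)<\lambda(b)$ already when $b\in aH\setminus aH^{\times}$ or $b\in Ha\setminus H^{\times}a$. Under that reading your chain argument closes immediately with no fix needed, and both your and the paper's verification that the candidate $\lambda$ is a length function still go through, since they only ever use $b=ac$ with $c\in\mathfrak m$. So the correct repair is to drop the two-sided exclusion, not to patch it. Second, if you insist on the literal reading of the definition (excluding $b\in H^{\times}a$ as well), then you must actually rule out $d_kc_{k+1}\in H^{\times}d_k$, and neither your argument nor the paper's does so.
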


\begin{proof}
(a) \,$\Rightarrow$\, (b) \ Let $a \in \mathfrak m^k$ for some $k
\in \mathbb N$. Then there exist $a_1, \ldots, a_k \in \mathfrak m$
such that $a = a_1 \cdot \ldots \cdot a_k$ and hence $\max \mathsf L
(a) \ge k$. Since $\mathsf L (a)$ is finite, there exists some $l
\in \mathbb N$ such that $a \notin \mathfrak m^l \supset \bigcap_{n
\ge 0} \mathfrak m^n$.

\smallskip
(b) \,$\Rightarrow$\, (c) \ We define  a map $\lambda \colon H \to
\mathbb N_0$ by setting $\lambda (a) = \max \{ n \in \mathbb N_0
\mid a \in \mathfrak m^n \}$, and assert that $\lambda$ is a length
function. Let $a \in H$ and $b \in (aH \cup Ha) \setminus
(aH^{\times} \cup H^{\times}a)$, say $b \in aH$. Then $b = ac$ for
some $c \in \mathfrak m$. If $\lambda (a) = k$, then $a \in
\mathfrak m^k$, $b = ac \in \mathfrak m^{k+1}$, and thus $\lambda
(b) \ge k+1 > \lambda (a)$.

(c) \,$\Rightarrow$\, (a) \ Let $\lambda \colon H \to \mathbb N_0$
be a length function. Note that, if $b \in H^{\times}$ and $c \in H
\setminus H^{\times}$, then $c \in bH = H$ implies that $\lambda (c)
> \lambda (b) \ge 0$. We assert that every $a \in H \setminus
H^{\times}$ can be written as a product of atoms, and that $\sup
\mathsf L (a) \le \lambda (a)$. If $a \in \mathcal A (H)$, then
$\mathsf L (a) = \{1\}$, and the assertion holds. Suppose that $a
\in H$ is neither an atom nor a unit. Then $a$ has a product
decomposition of the form
\[
a = u_1 \cdot \ldots \cdot u_k \quad \text{ where} \quad  k \ge 2 \
\text{ and}  \ u_1, \ldots, u_k \in H \setminus H^{\times} \,.
\tag{$*$}
\]
For $i \in [0, k]$, we set $a_i = u_1 \cdot \ldots \cdot u_i$, and
then $a_{i+1} \in a_iH \setminus a_iH^{\times}$ for all $i \in [0,
k-1]$. This implies that $\lambda (a) = \lambda (a_k) > \lambda
(a_{k-1}) > \ldots > \lambda (a_1) > 0$ and thus $\lambda (a) \ge
k$. Therefore there exists a $k \in \N$ maximal such that $a = u_1
\cdot \ldots \cdot u_k$ where $u_1, \ldots, u_k \in H \setminus
H^{\times}$, and this implies that $u_1, \ldots, u_k \in \mathcal A
(H)$ and $k = \max \mathsf L (a) \le \lambda (a)$.
\end{proof}

\medskip
\begin{lemma} \label{6.2}
Let  $H$ be a monoid and $\Omega$  a set of prime ideals of $H$ such
that
\[
\bigcap_{n \in \N} \mathfrak p^n = \emptyset \quad \text{for all}
\quad \mathfrak p \in \Omega\,.
\]
If for every $a \in H \setminus H^{\times}$ the set $\Omega_a = \{
\mathfrak p \in \Omega \mid a \in \mathfrak p \}$ is finite and
non-empty, then $H$ is a \BF-monoid.
\end{lemma}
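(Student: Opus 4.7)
My plan is to construct an explicit length function $\lambda\colon H\to\N_0$ and then invoke Lemma~\ref{6.1}.(c). For each $\mathfrak p\in\Omega$, the hypothesis $\bigcap_{n\in\N}\mathfrak p^n=\emptyset$ guarantees that
\[
v_{\mathfrak p}(a)=\sup\{n\in\N_0\mid a\in\mathfrak p^n\}
\]
is a well-defined non-negative integer (with the convention $\mathfrak p^0=H$). In particular $v_{\mathfrak p}(a)\ge 1$ if and only if $a\in\mathfrak p$, and $v_{\mathfrak p}(a)=0$ whenever $\mathfrak p\notin\Omega_a$. I then set
\[
\lambda(a)=\sum_{\mathfrak p\in\Omega_a}v_{\mathfrak p}(a)\in\N_0,
\]
which is a finite sum since $\Omega_a$ is finite for every non-unit $a$; for units $a$ we have $\Omega_a=\emptyset$ (a prime ideal is proper), so $\lambda(a)=0$.

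The key input is the multiplicative inequality $v_{\mathfrak p}(ac)\ge v_{\mathfrak p}(a)+v_{\mathfrak p}(c)$ for all $a,c\in H$ and all $\mathfrak p\in\Omega$. This follows directly from the set-theoretic identity $\mathfrak p^{m}\cdot\mathfrak p^{k}\subset\mathfrak p^{m+k}$ (products of ideals are the usual products of subsets, and the claim is then a trivial induction from $\mathfrak p\cdot\mathfrak p^{n}=\mathfrak p^{n+1}$). Moreover, since $\mathfrak p$ is an ideal, $a\in\mathfrak p$ implies $ac,ca\in\mathfrak p$, so $\Omega_a\cup\Omega_c\subset\Omega_{ac}$. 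These two observations are all that is needed.

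Now suppose $b\in aH\setminus aH^{\times}$, say $b=ac$ with $c\in H\setminus H^{\times}$. Splitting the sum defining $\lambda(b)$ according to the partition $\Omega_b=\Omega_a\sqcup(\Omega_b\setminus\Omega_a)$ and using $\Omega_c\subset\Omega_b$, I estimate
\[
\lambda(b)\ \ge\ \sum_{\mathfrak p\in\Omega_a}v_{\mathfrak p}(b)\ +\ \sum_{\mathfrak p\in\Omega_c\setminus\Omega_a}v_{\mathfrak p}(b)\ \ge\ \sum_{\mathfrak p\in\Omega_a}\bigl(v_{\mathfrak p}(a)+v_{\mathfrak p}(c)\bigr)\ +\ \sum_{\mathfrak p\in\Omega_c\setminus\Omega_a}v_{\mathfrak p}(c)\ =\ \lambda(a)+\lambda(c).
\]
Because $c$ is a non-unit, the hypothesis gives $\Omega_c\ne\emptyset$, and for any $\mathfrak p\in\Omega_c$ one has $v_{\mathfrak p}(c)\ge 1$, so $\lambda(c)\ge 1$ and consequently $\lambda(b)>\lambda(a)$. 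The symmetric case $b\in Ha\setminus H^{\times}a$ is handled identically (applying the argument in $H^{\mathrm{op}}$, or noting that the inequality $v_{\mathfrak p}(ca)\ge v_{\mathfrak p}(c)+v_{\mathfrak p}(a)$ is proved in exactly the same way). Thus $\lambda$ is a length function, and Lemma~\ref{6.1} implies that $H$ is a \BF-monoid.

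The only mildly delicate point is justifying $v_{\mathfrak p}(ac)\ge v_{\mathfrak p}(a)+v_{\mathfrak p}(c)$ in the non-commutative setting; but since the powers $\mathfrak p^n$ are defined as set-theoretic products and the argument uses only associativity, no commutativity is required. Everything else is bookkeeping.
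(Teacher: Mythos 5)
Your proposal is correct and takes essentially the same route as the paper: the paper also reduces to Lemma~\ref{6.1}.(c) and uses the same length function, written there as $\lambda(a)=\sup\{n_1+\cdots+n_k\mid a\in\mathfrak p_1^{n_1}\cap\cdots\cap\mathfrak p_k^{n_k}\}$, which coincides with your sum $\sum_{\mathfrak p\in\Omega_a}v_{\mathfrak p}(a)$. The only cosmetic difference is in verifying the increase: the paper picks a single prime $\mathfrak q\ni c$ and bumps one exponent by $1$, whereas you prove the slightly stronger superadditivity $\lambda(ac)\ge\lambda(a)+\lambda(c)$ from $\mathfrak p^{m}\mathfrak p^{k}\subset\mathfrak p^{m+k}$.
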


\begin{proof}
By Lemma \ref{6.1}, it suffices to show that $H$ has a length
function.  If $a \in H$ and $\Omega_a = \{\mathfrak p_1, \ldots,
\mathfrak p_k\}$, we define
\[
\lambda (a) = \sup \{ n_1 + \ldots + n_k \mid n_1, \ldots, n_k \in
\N_0, \ a \in \mathfrak p_1^{n_1} \cap \ldots \cap \mathfrak
p_k^{n_k} \} \,.
\]
By assumption, there exists some $n \in \N$ such that $a \notin
\mathfrak p_i^n$ for all $i \in [1,k]$, whence $\lambda (a) \le kn$.
We assert that $\lambda \colon H \to \N_0$ is a length function. Let
$a \in H$ and $b \in (aH \cup Ha) \setminus (aH^{\times} \cup
H^{\times}a)$, say $b = ac$ for some $c \in H \setminus H^{\times}$.
Since $\Omega_c \ne \emptyset$, there is a $\mathfrak q \in \Omega$
with $c \in \mathfrak q$. We assume that $\Omega_a = \{\mathfrak
p_1, \ldots, \mathfrak p_k\}$, \ $a \in \mathfrak p_1^{n_1} \cap
\ldots \cap \mathfrak p_k^{n_k}$ and $\lambda (a) = n_1+ \ldots
+n_k$. If $\mathfrak q \in \Omega_a$, say $\mathfrak q = \mathfrak
p_k$, then $b = ac \in (\mathfrak p_1^{n_1} \cap \mathfrak p_2^{n_2}
\cap \ldots \cap \mathfrak p_k^{n_k})\mathfrak p_k \subset \mathfrak
p_1^{n_1} \cap \mathfrak p_2^{n_2} \cap \ldots \cap \mathfrak
p_k^{n_k+1}$ and therefore $\lambda (b) \ge n_1 + \ldots +(n_k+1)
> \lambda (a)$. If $\mathfrak q \notin \Omega_a$,
then $b =ac \in ( \mathfrak p_1^{n_1} \cap \ldots \cap \mathfrak
p_k^{n_k})\mathfrak q \subset  \mathfrak p_1^{n_1} \cap \ldots \cap
\mathfrak p_k^{n_k} \cap \mathfrak q$ and thus again $\lambda (b)
\ge  n_1 + \ldots +n_k + 1 > \lambda (a)$.
\end{proof}

\medskip
\begin{definition} \label{6.3}
A monoid homomorphism \ $\theta \colon H \to B$ from a monoid $H$ onto a reduced monoid
$B$ is called a \ {\it transfer homomorphism} \ if it has the
following properties:

\smallskip

\begin{enumerate}
\item[]
\begin{enumerate}
\item[{\bf (T\,1)\,}] $B = \theta(H) $ \ and \ $\theta
^{-1} (1) = H^\times$.

\smallskip

\item[{\bf (T\,2)\,}] If $a \in H$, \ $b_1,\,b_2 \in B$ \ and \ $\theta
(a) = b_1b_2$, then there exist \ $a_1,\,a_2 \in H$ \ such that \ $a
= a_1a_2$, \ $\theta (a_1) = b_1$ \ and \ $\theta (a_2) =
b_2$.
\end{enumerate}\end{enumerate}
\end{definition}

\medskip
Transfer homomorphisms in a non-commutative setting were first used by  Baeth,  Ponomarenko et al. in \cite{B-P-A-A-H-K-M-R11}.

\medskip
\begin{proposition} \label{6.4}
Let $H$ and $B$ be  monoids, $\theta \colon H \to B$  a transfer
homomorphism and $a \in H$.

\smallskip

\begin{enumerate}
\item
If \ $k \in \N$, \ $b_1, \ldots, b_k \in B$ and $\theta(a) =
b_1 \cdot\ldots\cdot b_k$, then there exist $a_1, \ldots, a_k \in H$
such that $a = a_1 \cdot\ldots\cdot a_k$ and \ $\theta
(a_\nu) = b_\nu$ \ for all \ $\nu \in [1,k]$.

\smallskip

\item $a$ \ is an atom of \ $H$ if and only if \ $\theta (a)$ \
is an atom of \ $B$.

\smallskip

\item  $\mathsf L_H(a) = \mathsf L_B \big(\theta(a) \big)$.

\smallskip
\item $H$ is atomic $($a \BF-monoid resp.$)$ if and only if \ $B$ is atomic $($a \BF-monoid resp.$)$.

\smallskip
\item Suppose that $H$ is a \BF-monoid. Then $\rho (H) = \rho (B)$,
$\Delta (H) = \Delta (B)$, and the Structure Theorem for Sets of
Lengths holds for $H$ if and only if it holds for $B$ $($with the
same parameters$)$.
\end{enumerate}
\end{proposition}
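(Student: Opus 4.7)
The plan is to prove the five items in the stated order, with items 4 and 5 falling out cheaply once items 1--3 are established. For item 1, I will argue by induction on $k$. The case $k=1$ is trivial and $k=2$ is exactly property (T\,2). For the inductive step from $k-1$ to $k$, I apply (T\,2) to the factorization $\theta(a)=b_1\,(b_2\cdot\ldots\cdot b_k)$ to obtain $a=a_1 c$ with $\theta(a_1)=b_1$ and $\theta(c)=b_2\cdot\ldots\cdot b_k$, and then invoke the inductive hypothesis on $c$ to refine $c=a_2\cdot\ldots\cdot a_k$ with $\theta(a_\nu)=b_\nu$ for $\nu\in[2,k]$.

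For item 2, I use that $B$ is reduced, so $B^{\times}=\{1\}$, together with (T\,1), which gives $\theta(a)=1$ if and only if $a\in H^{\times}$. So $a\notin H^{\times}$ is equivalent to $\theta(a)\notin B^{\times}$. If $a\in\mathcal{A}(H)$ and $\theta(a)=b_1 b_2$, then (T\,2) produces a factorization $a=a_1 a_2$ with $\theta(a_i)=b_i$; atomicity of $a$ forces some $a_i\in H^{\times}$, and then $b_i=1$. Conversely, if $\theta(a)\in\mathcal{A}(B)$ and $a=uv$, then $\theta(a)=\theta(u)\theta(v)$ forces one factor on the right to be $1$, hence by (T\,1) the corresponding one of $u,v$ lies in $H^{\times}$. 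Item 3 then combines items 1 and 2: a factorization of $a$ into $k$ atoms of $H$ maps to a factorization of $\theta(a)$ into $k$ atoms of $B$ by item 2, and conversely a factorization $\theta(a)=b_1\cdot\ldots\cdot b_k$ into atoms of $B$ lifts by item 1 to $a=a_1\cdot\ldots\cdot a_k$ whose factors are atoms of $H$ by item 2; the unit case $a\in H^{\times}$ corresponds to $\theta(a)=1$ and both sets of lengths equal $\{0\}$.

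Item 4 is immediate from item 3: atomicity of a factor is equivalent to $\mathsf L(a)\ne\emptyset$, the \BF-property to $\mathsf L(a)$ being finite and non-empty, and both conditions on $a$ translate verbatim to $\theta(a)$. For item 5, surjectivity of $\theta$ (from (T\,1)) combined with item 3 gives the set-theoretic equality $\mathcal L(H)=\mathcal L(B)$. All invariants in the statement---elasticity, set of distances, and the property of being governed by the Structure Theorem for Sets of Lengths with parameters $(M^*,\Delta^*)$---are defined purely in terms of $\mathcal L(\cdot)$, so they coincide for $H$ and $B$.

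The main obstacle, such as it is, is purely bookkeeping: one must track carefully that the units produced when lifting factorizations via (T\,2) do not corrupt the correspondence between atom factorizations (since a lifted factor $a_i$ with $\theta(a_i)=1$ would be a unit and would shorten the length in $H$). This is handled cleanly by working with reduced $B$ and invoking (T\,1) in the form $\theta^{-1}(1)=H^{\times}$, so that the number of non-unit factors on both sides agrees and item 3 goes through without a length defect.
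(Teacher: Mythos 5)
Your proposal is correct and follows essentially the same route as the paper: induction on $k$ via \textbf{(T\,2)} for item 1, the two-sided atom correspondence using \textbf{(T\,1)} and reducedness of $B$ for item 2, transport of length sets via items 1 and 2 for item 3, and items 4 and 5 as immediate consequences. Your extra care about the unit bookkeeping (each lifted factor $a_i$ of an atom $b_i$ is itself an atom, so no length defect occurs) is exactly the point the paper's proof relies on implicitly.
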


\begin{proof}
1. This follows by induction on $k$.

\smallskip
2. Let $a \in H$ be an atom, and suppose that $\theta (a) = b_1b_2$
with $b_1, b_2 \in B$. By {\bf (T2)}, there exist $a_1, a_2 \in H$
with $a = a_1 a_2$ and $\theta (a_i) = b_i $ for $i \in [1, 2]$.
Since $a$ is an atom, we infer that $a_1 \in H^{\times}$ or $a_2 \in H^{\times}$, and thus
$b_1=1$ or $b_2  = 1$. Conversely, suppose that $\theta (a)$ is an
atom of $B$. If $a = a_1 a_2$, then $\theta (a) = \theta (a_1)
\theta (a_2)$. Thus $\theta (a_1) =1$ or $\theta (a_2) = 1$, and
therefore $a_1 \in H^{\times}$ or $a_2 \in H^{\times}$.

\smallskip
3. By {\bf (T1)}, it follows that $a \in H^{\times}$ if and only if $\theta (a) =
1$. Suppose that $a \notin H^{\times}$, and choose $k \in \N$. If $k \in \mathsf
L_H (a)$, then there exist $u_1, \ldots, u_k \in \mathcal A (H)$
such that $a = u_1 \cdot \ldots \cdot u_k$. Then $\theta (a) =
\theta (u_1) \cdot \ldots \cdot \theta (u_k)$. Since $\theta (u_1),
\ldots, \theta (u_k)  \in \mathcal A (B)$ by 2., it follows that $k
\in \mathsf L_B \big( \theta (a) \big)$. Conversely, suppose that $k
\in \mathsf L_B \big( \theta (a) \big)$. Then there are $b_1,
\ldots, b_k \in \mathcal A (B)$ such that $\theta (a) = b_1 \cdot
\ldots \cdot b_k$. Now 1. and 2.  imply that $k \in \mathsf L_H
(a)$.

\smallskip
4. A monoid $S$ is atomic (a \BF-monoid resp.) if and only if for
all $s \in S$, we have $\mathsf L (s) \ne \emptyset$ ($\mathsf L
(s)$ is finite and non-empty resp.). Thus the assertion follows from
3.

\smallskip
5. This follows immediately from 3. and 4.
\end{proof}

\medskip
\begin{theorem}[\bf Arithmetic of Krull monoids] \label{6.5}~

\noindent \ Let $H$ be a  Krull monoid.
\begin{enumerate}
\item If every $a \in H \setminus H^{\times}$ lies in a divisorial ideal distinct from $H$, then $H$ is a \BF-monoid.

\smallskip
\item Let  $\varphi \colon H \to D = \mathcal F (P)$ be a  divisor
      homomorphism, $G = \mathcal C (\varphi)$ its class group and $G_P
      \subset G$ the set of classes containing prime divisors.
      \begin{enumerate}
      \smallskip
      \item Let $\widetilde{\boldsymbol \beta} \colon \mathcal F (P) \to \mathcal F (G_P)$
            denote the unique homomorphism satisfying $\widetilde{\boldsymbol \beta} (p) = [p]$ for
            all $p \in P$. Then, for all $\alpha \in D$, we have
            $\widetilde{\boldsymbol \beta} (\alpha) \in \mathcal B (G_P)$
            if and only if $\alpha \in \varphi (H)$, and the map
            $\boldsymbol \beta = \widetilde{\boldsymbol \beta} \circ \varphi \colon H \to \mathcal B (G_P)$ is a transfer
            homomorphism.

      \smallskip
      \item If $\mathsf D (G_P) < \infty$, then $\rho (H) < \infty$,
            $\Delta (H)$ is finite, and there exists some $M^* \in \mathbb
            N_0$ such that the Structure Theorem for Sets of Lengths holds
            for $H$ with parameters $M^*$ and  $\Delta (H)$.
      \end{enumerate}
\end{enumerate}
\end{theorem}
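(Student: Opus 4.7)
For part (1), my plan is to reduce the claim to Lemma \ref{6.2} applied to $\Omega = v$-$\max(H) = v$-$\spec(H)\setminus\{\emptyset\}$. Finiteness of $\Omega_a$ for every $a \in H$ is immediate from Proposition \ref{3.13}.3. The non-emptiness of $\Omega_a$ for $a \in H\setminus H^{\times}$ combines the hypothesis (which produces a divisorial ideal $\mathfrak a \subsetneq H$ with $a \in \mathfrak a$) with $v$-noetherianness (Proposition \ref{3.13}.1 yields a $v$-maximal $v$-ideal above $\mathfrak a$) and the fact that $v$-maximal $v$-ideals are prime (Lemma \ref{3.8}.2). The remaining condition $\bigcap_n\mathfrak p^n=\emptyset$ I would verify as follows: for $a \notin H^{\times}$ the hypothesis forces $(HaH)_v \subsetneq H$, so $(HaH)_v \in \mathcal I_v^*(H)$ has a unique finite factorization by Theorem \ref{3.14}; on the other hand $\mathfrak p^n \subset (\mathfrak p^n)_v$, so $a \in \bigcap_n \mathfrak p^n$ would give $(HaH)_v \subset (\mathfrak p^n)_v$ for all $n$, which by Proposition \ref{3.12}.4 translates into $(\mathfrak p^n)_v$ dividing $(HaH)_v$ in the free abelian monoid $\mathcal I_v^*(H)$ for all $n$, contradicting finite factorization length.

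For part (2)(a), I would first establish the ``if and only if'' characterization. The computation $\sigma\bigl(\widetilde{\boldsymbol\beta}(\alpha)\bigr) = \sum_{p\in P}\mathsf v_p(\alpha)[p] = [\alpha]$ in $\mathcal C(\varphi)$ gives $\widetilde{\boldsymbol\beta}(\alpha)\in\mathcal B(G_P)$ iff $[\alpha]=0$ iff $\alpha \in \mathsf q\bigl(\varphi(H)\bigr)\cap D$; the final implication $\alpha\in\mathsf q\bigl(\varphi(H)\bigr)\cap D \Rightarrow \alpha\in\varphi(H)$ uses commutativity of $D = \mathcal F(P)$ and Lemma \ref{4.8}: if $\alpha=\varphi(a)\varphi(b)^{-1}$ then $\varphi(b) \tl \varphi(a)$, so $b \tl a$, hence $a=bh$ and $\alpha = \varphi(h)$. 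With this in hand, (T1) is almost automatic: surjectivity follows by lifting any $B = g_1\cdots g_k \in \mathcal B(G_P)$ to $\alpha = p_1\cdots p_k$ with $[p_i]=g_i$ and then to $h \in H$ with $\varphi(h)=\alpha$; the kernel condition reduces to $\varphi(h)=1 \Rightarrow h\in H^{\times}$, which is the divisor homomorphism property applied to the reduced monoid $D$. For (T2), given $\boldsymbol\beta(a)=b_1b_2$, I factor $\varphi(a)=\alpha_1\alpha_2$ inside $D$ by distributing, for each $g\in G_P$, the multiplicities $\mathsf v_p\bigl(\varphi(a)\bigr)$ over the fiber $\{p\in P\mid [p]=g\}$ to match $\mathsf v_g(b_1)$ and $\mathsf v_g(b_2)$; each $\alpha_i\in\varphi(H)$ by the characterization, so $\alpha_i=\varphi(c_i)$, and $\varphi(c_1c_2)=\varphi(a)$ combined with the divisor homomorphism property yields $c_1c_2 = a\epsilon$ for some $\epsilon\in H^{\times}$, which can be absorbed into the second factor without affecting its image since $\varphi(H^{\times})\subset D^{\times}=\{1\}$.

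Part (2)(b) is then pure transport via Proposition \ref{6.4}. Under $\mathsf D(G_P)<\infty$, every atom of $\mathcal B(G_P)$ has length at most $\mathsf D(G_P)$, so $\mathcal B(G_P)$ is a \BF-monoid. The finiteness of $\rho(\mathcal B(G_P))$ and $\Delta(\mathcal B(G_P))$ as well as the Structure Theorem for Sets of Lengths for $\mathcal B(G_P)$ with suitable parameters $M^*$ and $\Delta^* = \Delta(\mathcal B(G_P))$ are standard results of commutative zero-sum theory, available in \cite[Chapter 4]{Ge-HK06a} and \cite{Ge09a}. Proposition \ref{6.4}.5 transfers all three conclusions to $H$.

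The step I expect to be the main obstacle is the verification of $\bigcap_n\mathfrak p^n=\emptyset$ in part (1): one must carefully observe that ordinary ideal powers are contained in $v$-powers and then exploit the finite unique factorization of $(HaH)_v$ in $\mathcal I_v^*(H)$; this is exactly where the hypothesis genuinely enters, since without it $(HaH)_v$ could equal $H$ and carry no factorization information. A secondary subtlety is (T2) in part (2)(a), where the factorization engineered inside the free abelian monoid $D$ only lifts back to $H$ up to a unit that must be absorbed into one of the two resulting factors.
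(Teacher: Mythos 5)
Your proof is correct and follows essentially the same route as the paper: part (1) via Lemma \ref{6.2} applied to the non-empty divisorial prime ideals (finiteness from Proposition \ref{3.13}.3, non-emptiness from the hypothesis together with Lemma \ref{3.8}, and the condition $\bigcap_n\mathfrak p^n=\emptyset$ from the freeness of $\mathcal I_v^*(H)$ in Theorem \ref{3.14}), part (2)(a) via the computation $\sigma\bigl(\widetilde{\boldsymbol\beta}(\alpha)\bigr)=[\alpha]$ and the renumbering/unit-absorption argument for {\bf (T2)}, and part (2)(b) by transfer through Proposition \ref{6.4}.5 to $\mathcal B(G_P)$. One minor quibble with your closing commentary: the hypothesis of part (1) genuinely enters only in showing $\Omega_a\ne\emptyset$, since for any $a\in\bigcap_n\mathfrak p^n$ the containment $(HaH)_v\subset\mathfrak p_v=\mathfrak p\subsetneq H$ is automatic from $a\in\mathfrak p$, so the intersection-of-powers step needs no extra input beyond $\mathfrak p$ being a non-empty divisorial prime.
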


\begin{proof}
1. We show that $\Omega = v$-$\spec (H) \setminus \{\emptyset\}$
satisfies the assumptions of Lemma \ref{6.2}. Then $H$ is a
\BF-monoid.

Let $a \in H \setminus H^{\times}$. By assumption, the set
$\Omega_a' = \{ \mathfrak a \in \mathcal I_v(H) \mid a \in \mathfrak
a \ \text{with} \ \mathfrak a \cap \{1\} = \emptyset\}$ is
non-empty, and  since $H$ is $v$-noetherian, $\Omega_a'$ has a
maximal element $\mathfrak p$ by Lemma \ref{3.13}, which is prime by
Lemma \ref{3.8}.1. Therefore the set $\Omega_a = \{ \mathfrak p \in
v\text{\rm -spec}(H) \mid a \in \mathfrak p\}$ is finite and
non-empty. Let $\mathfrak p \in v$-$\spec (H)$. If the intersection
of all powers of $\mathfrak p$ would be non-empty, it would be a
non-empty $v$-ideal and hence divisible by arbitrary powers of
$\mathfrak p$, a contradiction to the fact that $\mathcal I_v^* (H)$
is free abelian by Theorem \ref{3.14}.

\smallskip
2.(a) If $\alpha \in D$, then $\alpha = p_1 \cdot \ldots \cdot p_l$,
where $l \in \mathbb N_0$ and $p_1, \ldots, p_l \in P$,
$\widetilde{\boldsymbol \beta} (\alpha) = [p_1] \cdot \ldots \cdot
[p_l]$ and $\sigma \big( \widetilde{\boldsymbol \beta} (\alpha)
\big) = [p_1] + \ldots + [p_l] = [\alpha]$. Thus we have $[\alpha] =
0$ if and only if $\alpha \in \varphi (H)$. Therefore we obtain that
$\boldsymbol \beta = \widetilde{\boldsymbol \beta} \circ \varphi
\colon H \to \mathcal B (G_P)$ is a monoid epimorphism onto a
reduced monoid with $\boldsymbol \beta^{-1} (1) = H^{\times}$. To
verify {\bf (T2)}, let $a \in H$ with $\varphi (a) = p_1 \cdot
\ldots \cdot p_l \in D$, where $l \in \mathbb N_0$ and $p_1, \ldots,
p_l \in P$, and $\boldsymbol \beta (a) = [p_1] \cdot \ldots \cdot
[p_l] = b_1b_2$ with $b_1, b_2 \in \mathcal B (G_P)$. After
renumbering if necessary there is some $k \in [0, l]$ such that $b_1
= [p_1] \cdot \ldots \cdot [p_k]$ and $b_2 = [p_{k+1}] \cdot \ldots
\cdot [p_l]$. Setting $\alpha_1 = p_1 \cdot \ldots \cdot p_k$,
$\alpha_2 = p_{k+1} \cdot \ldots \cdot p_l$ we infer that $\alpha_1,
\alpha_2 \in \varphi (H)$, say $\alpha_i = \varphi (a_i)$ with $a_i
\in H$, and $\widetilde{\boldsymbol \beta} (\alpha_i) = b_i$ for $i
\in [1, 2]$. Then $\varphi (a) = \varphi (a_1)\varphi (a_2)$, and
hence by Lemma \ref{4.6}.2, we get $aH^{\times} = a_1 a_2
H^{\times}$. Thus there is an $\varepsilon \in H^{\times}$ such that
$a = (\varepsilon a_1)a_2$, $\boldsymbol \beta (\varepsilon a_1) =
\boldsymbol \beta (a_1) = b_1$ and $\boldsymbol \beta (a_2) = b_2$.

\smallskip
2.(b) Suppose that $\mathsf D (G_P) < \infty$. By Proposition
\ref{6.4}.5, it suffices to prove all assertions for the monoid
$\mathcal B (G_P)$. Thus the finiteness of the elasticity and of the
set of distances follows from \cite[Theorem 3.4.11]{Ge-HK06a}, and
the validity of the Structure Theorem follows from \cite[Theorem
5.1]{Ge-Ka10a} or from \cite[Theorem 4.4]{Ge-Gr09b}.
\end{proof}

\bigskip
\noindent {\bf Acknowledgement.} I would like to thank Franz
Halter-Koch and Daniel Smertnig, who read a previous version of this
manuscript and made many valuable suggestions. Furthermore, I wish to thank Jan Okni{\'n}ski, who made me aware of several results on semigroup algebras, and also the referee for reading the paper so carefully and for providing a list of useful comments.

\bigskip
\providecommand{\bysame}{\leavevmode\hbox to3em{\hrulefill}\thinspace}
\providecommand{\MR}{\relax\ifhmode\unskip\space\fi MR }
\providecommand{\MRhref}[2]{%
  \href{http://www.ams.org/mathscinet-getitem?mr=#1}{#2}
}
\providecommand{\href}[2]{#2}


\begin{thebibliography}{10}

\bibitem{Ak-Bu11a}
E.~Akalan and S.~B{\"u}lent, \emph{New characterizations of generalized
  {D}edekind prime rings}, J. Algebra Appl. \textbf{10} (2011), 821--825.

\bibitem{An97}
D.D. Anderson (ed.), \emph{Factorization in {I}ntegral {D}omains}, Lect. Notes
  Pure Appl. Math., vol. 189, Marcel Dekker, 1997.

\bibitem{As39a}
K.~Asano, \emph{Arithmetische {I}dealtheorie in nichtkommutativen {R}ingen},
  Jap. J. Math. \textbf{16} (1939), 1 -- 36.

\bibitem{As49a}
\bysame, \emph{Zur {A}rithmetik in {S}chiefringen {I}}, Osaka Math. J.
  \textbf{1} (1949), 98 -- 134.

\bibitem{As50a}
\bysame, \emph{Zur {A}rithmetik in {S}chiefringen {II}}, J. Inst. Polytechn.,
  Osaka City Univ., Ser. A \textbf{1} (1950), 1 -- 27.

\bibitem{As-Mu53a}
K.~Asano and K.~Murata, \emph{Arithmetical ideal theory in semigroups}, J.
  Inst. Polytechn., Osaka City Univ., Ser. A \textbf{4} (1953), 9 -- 33.

\bibitem{B-P-A-A-H-K-M-R11}
N.R. Baeth, V.~Ponomarenko, D.~Adams, R.Ardila, D.~Hannasch, A.~Kosh,
  H.~McCarthy, and R.~Rosenbaum, \emph{Number theory of matrix semigroups},
  Linear Algebra Appl. \textbf{434} (2011), 694 -- 711.

\bibitem{Bo-Sa66}
S.I. Borewicz and I.R. \v{S}afarevi\v{c}, \emph{Zahlentheorie},
  Birkh{\"{a}}user, 1966.

\bibitem{Br73a}
H.H. Brungs, \emph{Non commutative {K}rull domains}, J. Reine Angew. Math.
  \textbf{264} (1973), 161 -- 171.

\bibitem{Le83a}
L.~Le Bruyn, \emph{A note on maximal orders over {K}rull domains}, J. Pure
  Appl. Algebra \textbf{28} (1983), 241 -- 248.

\bibitem{Le-Oy86}
L.~Le Bruyn and F.~van Oystaeyen, \emph{A note on noncommutative {K}rull
  domains}, Commun. Algebra \textbf{14} (1986), 1457 -- 1472.

\bibitem{Ch81a}
M.~Chamarie, \emph{Anneaux de {K}rull non commutatifs}, J. Algebra \textbf{72}
  (1981), 210 -- 222.

\bibitem{Ch05a}
S.T. Chapman (ed.), \emph{Arithmetical {P}roperties of {C}ommutative {R}ings
  and {M}onoids}, Lect. Notes Pure Appl. Math., vol. 241, Chapman \& Hall/CRC,
  2005.

\bibitem{Ch-Gl00}
S.T. Chapman and S.~Glaz (eds.), \emph{Non-{N}oetherian {C}ommutative {R}ing
  {T}heory}, Kluwer {A}cademic {P}ublishers, 2000.

\bibitem{Ch-Kr-Oe02}
S.T. Chapman, U.~Krause, and E.~Oeljeklaus, \emph{On {D}iophantine monoids and
  their class groups}, Pacific J. Math. \textbf{207} (2002), 125 -- 147.

\bibitem{Ch81}
L.G. {Chouinard II}, \emph{{K}rull semigroups and divisor class groups}, Can.
  J. Math. \textbf{33} (1981), 1459 -- 1468.

\bibitem{Cl38}
A.H. Clifford, \emph{Arithmetic and ideal theory of commutative semigroups},
  Ann. Math. \textbf{39} (1938), 594 -- 610.

\bibitem{Cl-Pr64}
A.H. Clifford and G.B. Preston, \emph{The {A}lgebraic {T}heory of {S}emigroups
  {I}, 2nd ed.}, Providence Rhode Island, 1964.

\bibitem{Cl-Pr67}
\bysame, \emph{The {A}lgebraic {T}heory of {S}emigroups {II}}, Providence Rhode
  Island, 1967.

\bibitem{Co85a}
P.M. Cohn, \emph{Free {R}ings and their {R}elations, 2nd ed.}, L.M.S.
  Monographs, vol.~19, London Math. Soc., 1985.

\bibitem{Du91a}
N.I. Dubrovin, \emph{Non-commutative {K}rull rings}, Soviet Mathematics
  \textbf{35} (1991), 13 -- 19.

\bibitem{Fa02}
A.~Facchini, \emph{Direct sum decomposition of modules, semilocal endomorphism
  rings, and {K}rull monoids}, J. Algebra \textbf{256} (2002), 280 -- 307.

\bibitem{Fa06a}
\bysame, \emph{Krull monoids and their application in module theory}, Algebras,
  {R}ings and their {R}epresentations (A.~Facchini, K.~Fuller, C.~M. Ringel,
  and C.~Santa-Clara, eds.), World Scientific, 2006, pp.~53 -- 71.

\bibitem{Ga-Ge06b}
W.~Gao and A.~Geroldinger, \emph{Zero-sum problems in finite abelian groups{\rm
  \,:} a survey}, Expo. Math. \textbf{24} (2006), 337 -- 369.

\bibitem{Ge09a}
A.~Geroldinger, \emph{Additive group theory and non-unique factorizations},
  Combinatorial {N}umber {T}heory and {A}dditive {G}roup {T}heory
  (A.~Geroldinger and I.~Ruzsa, eds.), Advanced Courses in Mathematics CRM
  Barcelona, Birkh{\"a}user, 2009, pp.~1 -- 86.

\bibitem{Ge-Gr09b}
A.~Geroldinger and D.J. Grynkiewicz, \emph{On the arithmetic of {K}rull monoids
  with finite {D}avenport constant}, J. Algebra \textbf{321} (2009), 1256 --
  1284.

\bibitem{Ge-HK94}
A.~Geroldinger and F.~Halter-Koch, \emph{Arithmetical theory of monoid
  homomorphisms}, Semigroup Forum \textbf{48} (1994), 333 -- 362.

\bibitem{Ge-HK06b}
\bysame, \emph{Non-unique factorizations{\rm \,:} a survey}, Multiplicative
  {I}deal {T}heory in {C}ommutative {A}lgebra (J.W. Brewer, S.~Glaz,
  W.~Heinzer, and B.~Olberding, eds.), Springer, 2006, pp.~207 -- 226.

\bibitem{Ge-HK06a}
\bysame, \emph{Non-{U}nique {F}actorizations. {A}lgebraic, {C}ombinatorial and
  {A}nalytic {T}heory}, Pure and Applied Mathematics, vol. 278, Chapman \&
  Hall/CRC, 2006.

\bibitem{Ge-Ka10a}
A.~Geroldinger and F.~Kainrath, \emph{On the arithmetic of tame monoids with
  applications to {K}rull monoids and {M}ori domains}, J. Pure Appl. Algebra
  \textbf{214} (2010), 2199 -- 2218.

\bibitem{Gi72a}
R.~Gilmer, \emph{Multiplicative {I}deal {T}heory}, Pure and Applied
  Mathematics, vol.~12, Marcel Dekker, 1972.

\bibitem{Go-Wa04a}
K.R. Goodearl and R.B.~Warfield Jr., \emph{An {I}ntroduction to
  {N}oncommutative {N}oetherian {R}ings}, Student Texts, vol.~61, London Math.
  Soc., 2004.

\bibitem{Gu72}
K.-B. Gundlach, \emph{Einf{\"{u}}hrung in die {Z}ahlentheorie},
  Bibliographisches {I}nstitut, 1972.

\bibitem{HK90b}
F.~Halter-Koch, \emph{Halbgruppen mit {D}ivisorentheorie}, Expo. Math.
  \textbf{8} (1990), 27 -- 66.

\bibitem{HK93f}
\bysame, \emph{A characterization of {K}rull rings with zero divisors}, Arch.
  Math., Brno \textbf{29} (1993), 119 -- 122.

\bibitem{HK98}
\bysame, \emph{Ideal {S}ystems. {A}n {I}ntroduction to {M}ultiplicative {I}deal
  {T}heory}, Marcel Dekker, 1998.

\bibitem{HK10b}
\bysame, \emph{Multiplicative ideal theory in the context of commutative
  monoids}, Commutative {A}lgebra: {N}oetherian and {N}on-{N}oetherian
  {P}erspectives (M.~Fontana, S.-E. Kabbaj, B.~Olberding, and I.~Swanson,
  eds.), Springer, 2010, pp.~201 -- 229.

\bibitem{Je86a}
E.~Jespers, \emph{On {$\Omega$}-{K}rull rings}, Quaestiones Math. \textbf{9}
  (1986), 311 –-- 338.

\bibitem{Je-Ok99a}
E.~Jespers and J.~Okni{\'n}ski, \emph{{S}emigroup algebras and maximal orders},
  Canad. Math. Bull. \textbf{42} (1999), 298 -- 306.

\bibitem{Je-Ok07a}
\bysame, \emph{Noetherian {S}emigroup {A}lgebras}, Algebra and Applications,
  vol.~7, Springer, 2007.

\bibitem{Je-Wa86a}
E.~Jespers and P.~Wauters, \emph{{$\Omega$}-{K}rull rings {II}}, Commun.
  Algebra \textbf{14} (1986), 833 -- 849.

\bibitem{Je-Wa88a}
\bysame, \emph{A general notion of noncommutative {K}rull rings}, J. Algebra
  \textbf{112} (1988), 388 -- 415.

\bibitem{Kr89}
U.~Krause, \emph{On monoids of finite real character}, Proc. Am. Math. Soc.
  \textbf{105} (1989), 546 -- 554.

\bibitem{Kr36r}
W.~Krull, \emph{Beitr{\"a}ge zur {A}rithmetik kommutativer
  {I}ntegrit{\"a}tsbereiche}, Math. Z. \textbf{41} (1936), 545 -- 577.

\bibitem{Kr36f}
\bysame, \emph{Beitr{\"a}ge zur {A}rithmetik kommutativer
  {I}ntegrit{\"a}tsbereiche. {II}. $v$-{I}deale und vollst{\"a}ndig ganz
  abgeschlossene {I}ntegrit{\"a}tsbereiche}, Math. Z. \textbf{41} (1936), 665
  -- 679.

\bibitem{Kr37}
\bysame, \emph{Beitr{\"a}ge zur {A}rithmetik kommutativer
  {I}ntegrit{\"a}tsbereiche. {III.} {Z}um {D}imensionsbegriff der
  {I}dealtheorie}, Math. Z. \textbf{42} (1937), 745 -- 766.

\bibitem{La01a}
T.Y. Lam, \emph{A first {C}ourse in {N}oncommutative {R}ings, 2nd ed.},
  Graduate Texts in Mathematics, vol. 131, Springer, 2001.

\bibitem{Lo39}
P.~Lorenzen, \emph{Abstrakte {B}egr{\"u}ndung der multiplikativen
  {I}dealtheorie}, Math. Z. \textbf{45} (1939), 533 -- 553.

\bibitem{Ma74a}
H.~Marubayashi, \emph{Non commutative {K}rull rings}, Osaka J. Math.
  \textbf{12} (1975), 703 -- 714.

\bibitem{Ma76a}
\bysame, \emph{On bounded {K}rull prime rings}, Osaka J. Math. \textbf{13}
  (1976), 491 -- 501.

\bibitem{Ma78a}
\bysame, \emph{A characterization of bounded {K}rull prime rings}, Osaka J.
  Math. \textbf{15} (1978), 13 -- 20.

\bibitem{Ma-Ra80a}
G.~Maury and J.~Raynaud, \emph{Ordres {M}aximaux au {S}ens de {K}. {A}sano},
  Lect. Notes Math., vol. 808, Springer, 1980.

\bibitem{Mc-Ro01a}
J.C. McConnell and J.C. Robson, \emph{Noncommutative {N}oetherian {R}ings},
  Graduate {S}tudies in {M}athematics, vol.~30, American {M}athematical
  {S}ociety, 2001.

\bibitem{Mi81a}
Y.~Miyashita, \emph{On a {K}rull order}, Osaka J. Math. \textbf{18} (1981), 33
  -- 54.

\bibitem{Va-Ve84}
F.~Van Oystaeyen and A.~Verschoren, \emph{Relative {I}invariants of {R}ings:
  {T}he {N}oncommutative {T}heory}, Pure and Applied Mathematics, Marcel
  Dekker.

\bibitem{Pr32}
H.~Pr{\"u}fer, \emph{Untersuchungen {\"u}ber {T}eilbarkeitseigenschaften in
  {K}{\"o}rpern}, J. Reine Angew. Math. \textbf{168} (1932), 1 -- 36.

\bibitem{Re77a}
H.P. Rehm, \emph{Multiplicative ideal theory of noncommutative {K}rull pairs
  {I}: module systems, {K}rull ring-type chain conditions, and applications to
  two-sided ideals}, J. Algebra \textbf{48} (1977), 150 -- 165.

\bibitem{Re77b}
\bysame, \emph{Multiplicative ideal theory of noncommutative {K}rull pairs
  {II}: factorization of one-sided ideals}, J. Algebra \textbf{48} (1977), 166
  -- 181.

\bibitem{R03}
I.~Reiner, \emph{Maximal {O}rders}, Oxford Univ. Press, 2003.

\bibitem{Sc10a}
W.A. Schmid, \emph{Higher-order class groups and block monoids of {K}rull
  monoids with torsion class group}, J. Algebra Appl. \textbf{9} (2010), 433 --
  464.

\bibitem{Sk70}
L.~Skula, \emph{Divisorentheorie einer {H}albgruppe}, Math. Z. \textbf{114}
  (1970), 113 -- 120.

\bibitem{St10a}
S.A. Steinberg, \emph{Lattice-ordered {R}ings and {M}odules}, Springer, 2010.

\bibitem{Wa84a}
P.~Wauters, \emph{On some subsemigroups of noncommutative {K}rull rings},
  Commun. Algebra \textbf{12} (1984), 1751 – 1765.

\bibitem{Wa-Je86a}
P.~Wauters and E.~Jespers, \emph{Examples of noncommutative {K}rull rings},
  Commun. Algebra \textbf{14} (1986), 819 -- 832.

\end{thebibliography}
\end{document}